\documentclass[review,3p,times]{elsarticle}

\usepackage{amsmath,latexsym,amssymb,amsfonts,amsbsy}
\usepackage{amssymb}
\usepackage{amsmath}
\usepackage{amsthm}
\usepackage{cases}
\usepackage{mathrsfs}
\usepackage{graphicx,color}
\DeclareGraphicsExtensions{.eps,.bmp}
\usepackage{ctex}
\CTEXoptions[today=old]

\usepackage{graphicx}
\usepackage{subcaption}
\usepackage{indentfirst}
\usepackage{verbatim}
\usepackage{epstopdf}
\usepackage{tikz}
\usetikzlibrary{decorations.pathreplacing,calligraphy}
\usepackage{booktabs}
\usepackage{tikz-3dplot}
\usepackage{multirow}
\usepackage{algorithm}
\usepackage{algorithmic}

\usepackage[hidelinks]{hyperref}

\usepackage{tikz}
\usetikzlibrary{decorations.pathreplacing,calligraphy,arrows.meta,scopes,calc,intersections}

\newtheorem{theorem}{Theorem}

\newtheorem{lemma}{Lemma}

\newcommand{\hf}{{\frac{1}{2}}}
\newcommand{\ah}{{\frac{1}{6}}}
\newcommand{\bh}{{\frac{1}{3}}}
\newcommand{\fh}{{\frac{2}{3}}}
\newcommand{\gh}{{\frac{5}{6}}}
\newcommand{\bV}{{\bf v}}
\newcommand{\bVi}{{\bf v}^{i}}

\newcommand{\bN}{{\bf n}}
\newcommand{\bT}{{\bf t}}
\newcommand{\bx}{\boldsymbol{x}}

\newcommand{\bxi}{\boldsymbol{\xi}}
\newcommand{\bphi}{\boldsymbol{\phi}}

\newcommand{\ustar}{\tilde{u}}
\newcommand{\U}{\mathbf{U}}

\newcommand{\bU}{\bar{\mathbf{U}}}

\newcommand{\tbF}{\tilde{\mathbf{F}}}
\newcommand{\tbU}{\tilde{\mathbf{U}}}

\newcommand{\bF}{\mathbf{F}}

\newcommand{\bG}{\mathbf{G}}

\newcommand{\cO}{\mathcal{O}}

\newcommand{\cE}{\mathcal{E}}

\newcommand{\tbV}{\tilde{\bf v}}
\newcommand{\tbVs}{\tilde{\bf v}}
\newcommand{\tu}{\tilde{u}}

\newcommand{\tP}{\widetilde{P}}
\newcommand{\tPs}{\widetilde{P}}

\newcommand{\bra}[1]{\left(#1 \right)}
\newcommand{\brb}[1]{\left[#1 \right]}
\newcommand{\brc}[1]{\left\{#1 \right\}}

\newcommand{\avg}{\mathrm{avg}} 
\newcommand{\pt}{\mathrm{\textsc{pv}}} 
\newcommand{\via}{\mathrm{\textsc{via}}}

\newcommand{\dd}{\mathrm{d}}

\newcommand{\pd}[2]{\frac{\partial #1}{\partial #2}}

\newcommand{\tPsi}{\tP^i}
\newcommand{\tPsj}{\tP^j}
\newcommand{\tbVi}{\tbV^i}

\newcommand{\tVi}{\tilde{v}^i}
\newcommand{\tcVs}{\widetilde{\mathcal{V}}}
\newcommand{\cVs}{\mathcal{V}}

\begin{document}
\begin{frontmatter}


\title{ 
A third-order multi-moment cell-centered Lagrangian scheme for hydrodynamics with an accurate 2D nodal solver
}
\author{Xiaoteng Zhang$^{1,2}$}
\ead{zxt2019@pku.edu.cn}

\author{Xun Wang$^{3}$}
\ead{s151025@muc.edu.cn}

\author{Zhijun Shen$^{4,5}$}
\ead{shen_zhijun@iapcm.ac.cn}

\author{Chao Yang\corref{cor1}$^{1,2}$}
\ead{chao_yang@pku.edu.cn}

\cortext[cor1]{Corresponding author.}

\address{ 1. School of Mathematical Sciences, Peking University, Beijing 100871, China\\
2. PKU-Changsha Institute for Computing and Digital Economy, Hunan 410205, China\\
3.     Academy of Mathematics and Systems Science,
Chinese Academy of Sciences,  Beijing, 100190, PR China\\
4. Institute of Applied Physics and Computational Mathematics, LCP,P. O. Box 8009-26, Beijing 100088, China\\
5. HEDPS, Center for Applied Physics and Technology, and College of
Engineering, Peking University, Beijing 100871, China
}

\begin{abstract}
This paper presents a novel high-order cell-centered Lagrangian scheme for 2D compressible hydrodynamics by bridging the multi-moment constrained finite volume method (MCV) \cite{Ii-MCV, Xie-2DMCV, Xie-2DMCVi} with a nodal Riemann solver.
This scheme (denoted by LMCV) not only maintains high-order accuracy as MCV but also inherits the conservation and robust properties of the nodal Riemann solver. 
On the one hand, the MCV  employs and evolves both the point-values (PV) at cell vertexes and the volume-integrated averages (VIA) on computational mesh, which ensures the rigorous numerical conservation and 
establishes an adequate foundation for the computation of Lagrangian fluxes with high accuracy.
On the other hand, we developed a 2D Riemann solver based on EUCCLHYD \cite{Maire-EUCCLHYD}, it takes fully advantage of numerical formulations from high-order scheme and accomplishes the compatibility between the mesh movement and numerical fluxes. 
The main new features of the solver are the introduction of a new set of jump and balance conditions. The jump condition provides a high-accurate formulation linking the surface pressure of each cell to its nodal velocity, while the balance condition ensures nodal conservation and stabilizes the velocity field without losing accuracy. 
More intriguing is that our nodal solver can be regarded as a natural high-order extension of the HLLC and the HLLC-2D \cite{Shen-HLLC2D} solvers. The comparison between these solvers better demonstrates our innovative approach in addressing the difficulties encountered in constructing 2D high-order Lagrangian schemes. A variety of numerical experiments are carried out to illustrate the accuracy and robustness of the algorithm.
\end{abstract}



\begin{keyword}
Lagrangian method; high order accuracy; compressible flow; Multi-moment method

\end{keyword}

\end{frontmatter}

\section{Introduction} \label{chap:Lag:Intro}


\noindent 

The hypersonic flows problems are frequently found in the two main applications: design research on hypersonic flight vehicles and assessment studies of their aerodynamic and aerothermal characteristics. These problems are often too complicated for analytical investigation and present significant challenges for the experimental observation or measurement of physical variables. Numerical simulation provides effective methods and insights for the interpretation of compressible flow phenomena across virtually all scales, ranging from laboratory experiments to astrophysical environments. Since its inception in the late 1950s\cite{Godunov}, Computational Fluid Dynamics (CFD) has evolved significantly. While early methods achieved stability and shock-capturing capability\cite{Despres-GLACE, Maire-EUCCLHYD, Maire-Staggered, morgan2015godunov, Shen-HLLC2D,vilarPositivity2016P2}, they were often hampered by excessive numerical dissipation—a limitation that ultimately spurred the development of higher-order approaches. Before describing high-order Lagrangian method\cite{Cheng-3rd, Liu-subcellLDG, Morgan-LDG,Morgan-LDG-2,Vilar-LDG}, let us briefly give a historical overview of the higher-order finite volume schemes. 

Starting with the monotonic upstream-centered scheme for conservation laws method (MUSCL) \cite{vanLeer}
proposed by Bram van Leer, subsequent high-order schemes have developed in two dimensions: reconstruction of each stencil and degrees of freedom of each cell. 
Representing by essentially non-oscillatory scheme (ENO) \cite{Harten-ENO}, weighted ENO scheme (WENO) \cite{Liu-WENO,Jiang-WENO} and Hermite WENO scheme (HWENO) \cite{Qiu-HWENO-1, Qiu-HWENO-2,Zhu-HWENO-3}, these schemes focus on improving accuracy by applying more effective reconstruction on wide stencils. Other schemes like discontinuous Galaken method (DG) \cite{Cockburn-DG-2,Cockburn-DG-3,Cockburn-DG-4,Cockburn-DG-5}, spectral difference method (SD) \cite{Liu-SD,Wang-SD} and multi-moment constrained finite volume method (MCV) \cite{Ii-MCV}, increase the order of accuracy by employing extra degree of freedom (DOF) at each cell instead of widening the reconstructing stencil. Despite improvements and encouraging results in recent years, a number of issues still remain. Most of high-order schemes are formulated in Euler framework for single-material, which remains a primary challenge: capturing of contact discontinuities in multi-material fluid flows. 


Many interface-capturing methods, such as marker particle and front-tracking approaches, are developed based on Lagrangian techniques \cite{Despres-GLACE, Law, Maire-EUCCLHYD}. In multi-dimensional case, solving Lagrangian formulations could be a challenge. 
Due to one point could be surrounded by a large number of control volumes with different states in multi-dimension, a straightforward usage of 1D Riemann solvers \cite{Roe, Harten, Wendroff} cannot provide the point velocity and surrounding numerical fluxes. In this scenario, the nodal Riemann solver was proposed as a solution, and used as a fundamental part in classic cell-centered Lagrangian schemes including GLACE \cite{Despres-GLACE} and EUCCLHYD \cite{Maire-EUCCLHYD,Maire-HighOrder}. 
Based on these first-order schemes, high-order Lagrangian schemes are developed. Take two-dimension for example, in order to track fluid motion in a velocity field with high-accuracy, most of existing schemes introduce curvilinear meshes, which require extra shape-control points on each edge. As a result, velocities and fluxes need be solved at both cell corners and edges. Several strategies are developed in this case. 

In work by Vilar et al.	\cite{Vilar-LDG}, 
the multidimensional approximate Riemann solver by Maire et al. \cite{Maire-EUCCLHYD,Maire-HighOrder} is used to solve the Riemann problem at the corner, while a 1D Riemann solver is used at the middle of each edge. Morgan et al. used this same approach for quadratic triangular cells \cite{Morgan-LDG} with two different Riemann solvers. A weakness of this approach is that the number of inputs to the Riemann problem varies along each edge. The inconsistency between the Riemann solvers may give rise to some spurious mesh motion or even self intersection. 
The above issues have been mentioned in various articles, some of which use post-processing techniques to cure the ill-conditioned velocity field. In work by Cheng and Shu \cite{Cheng-3rd}, a velocity limitation is introduced to prevent cell degeneration without destroying the third-order accuracy of the numerical scheme. Similarly, a velocity filter is proposed by Morgan et al. \cite{Morgan-LDG-2} to dissipate spurious mesh motion by modifying the velocity reconstructions. The reconstructed velocities are adjusted as a function of the difference between the resulting vertex Riemann velocities and a linear fit to these velocities. The strategy works well for many problems with strong shocks. 
In an alternative perspective, several articles are investigating the treatment of the underlying causes of spurious motion through pre-processing techniques. In work by Xiaodong Liu et al. \cite{Liu-subcellLDG}, each quadratic quadrilateral cell is carefully reconstructed into four quadrilateral subcells by subcell mesh stabilization (SMS). The middle point of each edge is surrounded by four subcells so that it is similar to the vertex at the cell corner, which makes the multidimensional Riemann solver consistently applicable at every vertices. This SMS scheme enables stable mesh motion and accurate solutions in the context of a Lagrangian high-order DG method that is up to third-order with quadratic cells.

In this paper, we develop a novel high-order cell-centered Lagrangian scheme for 2D compressible hydrodynamics based on MCV, which achieves third-order accuracy using an augmented 2D nodal Riemann solver. The MCV discretizes the governing equations by both PVs and VIAs, which are evolved separately by Lax-Friedrichs method and our nodal solver. The main features of this solver are the introduction of a new set of jump and balance conditions, which not only provide a formulation between surface pressure and nodal velocity with high-order accuracy but also stabilizes the nodal velocity field without losing accuracy or breaking nodal conservation. Furthermore, 3rd order TVD Runge-Kutta method and several limiting strategies are employed for accuracy and stability. The combination of MCV and our nodal solver, enables the cell-centered Lagrangian method, for the first time, to acquire third-order accuracy without introducing curvilinear meshes. 

The layout of this paper is as follows. The governing equations are introduced in Section \ref{chap:Lag:Govern} and spatially discretized in Section \ref{chap:Lag:LMCV}. High-order Lagrangian schemes and used Riemann solvers are compared in Section \ref{Chap:Lag:Revisit}. In Section \ref{chap:Lag:Nodal}, our augmented nodal solver is discussed in detail. Additional procedures for our scheme is introduced in Section \ref{chap:Lag:Proc}. Numerical results are demonstrated in Section \ref{chap:Lag:Test}. Concluding remarks are given in Section \ref{chap:Lag:Conclusion}. The accuracy limitation of tranditional Lagrangian schemes is illustrated in Appendix \hyperlink{Apd.2rd}{A}. The analysis of flux accuracy is shown in Appendix \hyperlink{Apd.4th}{B}. The details of reconstruction procedure are illustrated in Appendix \hyperlink{Apd.PV}{C}.

\section{Governing equations} \label{chap:Lag:Govern}
The standard 2D Euler equations can be written as: 
\begin{eqnarray}\label{2DEuler}
\pd{\U}{t} + \nabla \cdot \mathbb{F}(\U) = {\bf 0}, 
\end{eqnarray}
 where the conserved variables $\U$ and   flux $\mathbb{F}$ are
\begin{eqnarray*}
\U=
\begin{bmatrix}
\rho \\
\rho {\bf v} \\ 
\rho E
\end{bmatrix}, \quad 
\mathbb{F}= \begin{bmatrix}
\rho{\bf v}^\top \\
\rho {\bf v}\otimes {\bf v} + P \mathbb{I}_{2} \\ 
(\rho E + P){\bf v}^\top
\end{bmatrix}
\end{eqnarray*}
where $\rho,P,E$ are the fluid density, pressure and total energy respectively,  ${\bf v} = (v_x, v_y)^\top$  is the fluid velocity, $\mathbb{I}_{2}$ is identity matrix of size $2$. 
The above system is closed by an equation of state (EOS)
\begin{eqnarray}
P = \rho (\gamma-1) e= \rho (\gamma-1) \left( E-\hf |{\bf v}|^2\right),
\end{eqnarray}
where $\gamma$ is the specific heat ratio.

According to Reynolds transport theorem, it is easy to recast the system (\ref{2DEuler}) into the following moving control volume formulation
\begin{eqnarray} \label {1.2}
\frac{\mathrm{d}}{\mathrm{dt}} \int _{\omega} {\bf U}\mathrm{d}\omega +\int _{\partial \omega} [\mathbb{F} {\bf n}-({\bf w} \cdot {\bf n}){\bf U}]\mathrm{d}l=0,
\end{eqnarray}
where $\bf {w}$ is the  moving velocity of    the control volume boundary $\partial\omega$, and ${\bf n}=(n_x, n_y)^{\top}$ is the unit outward normal vector on the boundary of $\omega$.  
If ${\bf w} = 0$, the system reduces to a  Eulerian form, and if ${\bf w} = (v_x, v_y)^{\top}$, one obtains a Lagrangian formulation.

In the Lagrangian framework, the 2D Euler equations can be formulated
\begin{eqnarray}
\frac{\dd}{\dd t}\int_{\omega} \rho\, \dd\omega= 0, \label{lag_rho}\\
\frac{\dd}{\dd t}\int_{\omega} \rho {\bf v} \dd\omega + \int_{\partial \omega} P {\bf n}\dd l = \boldsymbol{0}, \label{lag_vel^int} \\
\frac{\dd}{\dd t}\int_{\omega} \rho E \dd\omega + \int_{\partial \omega} P{\bf v}\cdot  {\bf n}\dd l= 0, \label{lag_eng^int}
\end{eqnarray} 

In addition, the variation in time of a control volume needs to satisfy the geometric conservation law (GCL)
\begin{eqnarray}\label{lag_vol^int}
\frac{\dd}{\dd t}\int_{\omega}d \omega- \int_{\partial \omega} {\bf v}\cdot {\bf n}\dd l= 0, 
\end{eqnarray}

\section{Spatial discretization} \label{chap:Lag:LMCV}


\subsection{Notations}

Suppose the computational domain is divided into $I$ non-overlapping quadrilateral cells  $\{\omega^{i},i=1,\cdots,I\}$, each cell $\omega^i$ is uniquely idnetified by its periodic four vertices coordinate in the counterclockwise ordered. For arbitrary quadrilaterals $\omega^i$,  an iso-parametric transformation is introduced  to describe its geometry relations.  A canonical square $\Omega = [-1,1]^2$ is introduced as a reference cell, the transformation is denoted as 

$$\Phi^i(\bxi) : \Omega \to \omega^i,$$ 
with 
\begin{eqnarray}
\Phi^i(\xi,\eta) =  \frac{1}{4} \sum_{r=1}^{4}(1+ \xi_r \xi)(1+ \eta_r \eta) \bx^i_r,   
\end{eqnarray}
where $\bxi=(\xi,\eta)\in[-1,1]^2$, $\Phi^i(\bxi_r) = \bx^i_r$ and $\Phi^i(\boldsymbol{0}) = \bx^i_c$, referring to Fig. \ref{fig_Ci_quad}. 



{\color{black}
\begin{figure}[!htp]
\centering
\begin{tikzpicture}[scale=1]
\coordinate (Oa) at (0,0);
\coordinate (Ob) at (6,-2.3);

\draw [->] (Oa)-- ++(0.7,0) node [right] {$\xi$};
\draw [->] (Oa)-- ++(0,0.7) node [above] {$\eta$};

\draw (Oa)++(-2,-2) node [above right] {$(-1,-1)$}
-- ++ (4,0)     node [above left ] {$(1,-1)$}
-- ++ (0,4)     node [below left ] {$(1,1)$}
-- ++ (-4,0)    node [below right] {$(-1,1)$}
-- cycle;

\draw[-{Stealth[scale=1.5]}] (3.7,0)--(5.3,0);
\draw (4.5,0) node [above] {$\Phi^i(\bxi)$};

\fill (Oa)++(-2,-2) circle (1.5pt) node [left]    {$\bxi_{1}$}
(Oa)++(2,-2)  circle (1.5pt) node [right]   {$\bxi_{2}$}
(Oa)++(2,2)   circle (1.5pt) node [right]   {$\bxi_{3}$}
(Oa)++(-2,2)  circle (1.5pt) node [left]    {$\bxi_{4}$};

\draw [->] (Ob)-- ++(0.7,0) node [right] {$x$};
\draw [->] (Ob)-- ++(0,0.7) node [above] {$y$};

\coordinate (x1) at (1.1,0.6);
\coordinate (x12) at (3.5,1);
\coordinate (x23) at (-1,2.5);
\coordinate (x34) at (-2.7,-0.2);
\coordinate (x5) at (
$(x1)+0.75*(x12)+0.5*(x23)+0.25*(x34)$);

\draw (Ob) ++ (x1) node [left] {$\bx^i_1$}
-- ++ (x12) node [right] {$\bx^i_2$}
-- ++ (x23) node [above] {$\bx^i_3$}
-- ++ (x34) node [left] {$\bx^i_4$}
-- cycle;
\fill (Ob)++(x5) circle (1.5pt) node[below] {$\bx^i_c$};
\fill (Ob)++(x1) circle (1.5pt)
++(x12) circle (1.5pt)
++(x23) circle (1.5pt)
++(x34) circle (1.5pt);
\end{tikzpicture}
\caption{The transformation between reference cell $\Omega$ and computational cell $\omega^i$}
\label{fig_Ci_quad}
\end{figure}
}

To describe the physical field accurately, two kind of discretization moments are defined in our method, i.e. the volume-integrated average (VIA) and point-value (PV) at the vertices for conserved variables $\U$, 

\begin{eqnarray*}
\begin{aligned}
& \text{VIA}:  \U_\avg^i = \frac{1}{|\omega^i|}\int_{\omega^i} \U(\bx)\dd\omega= \frac{1}{|\Omega|}\int_{\Omega} \U^i(\bxi)J \dd \Omega, \label{con1} \\
&  \text{PV} :  \U^i_r = \U(\bx^i_r)=\U^i(\bxi_r), \; r = 1,2,3,4. \label{con2}
\end{aligned}
\end{eqnarray*}
where $J$ is the corresponding Jacobian determinant  $\mathrm{Det}({\bf J})=\mathrm{Det}(\frac{\partial\bx}{\partial\bxi} )$. 

\subsection{High order reconstructions} \label{chap:Lag:LMCV:space}
The reconstruction function \cite{Xie-2DMCV} of $\U^i$ on reference cell $\Omega$ is 
\begin{eqnarray}\label{cell^intp}
\U^i(\bxi) = \psi_{c} \U^i_c + \sum_{r=1}^{4} \psi_{r}\U^i_r 
+ \bra{\psi_{\xi} \U_\xi^i + \psi_{\eta} \U_\eta^i}
+ \bra{\psi_{\xi^2}\U_{\xi^2}^i + \psi_{\eta^2}\U_{\eta^2}^i},
\end{eqnarray}
where $\psi$ are scalar basis functions as below:
\begin{eqnarray*}
\begin{cases}
\psi_r=
\frac{1}{4}\xi\eta\bra{\xi+\xi_r}\bra{\eta+\eta_r}, \quad r=1,2,3,4,
                            &  \\
\psi_c=1-\xi^2\eta^2,                   &            \\
(\psi_{\xi},\psi_{\eta}) = \bra{\xi \bra{1-\eta^2},\eta \bra{1-\xi^2}}, & \\
(\psi_{\xi^2},\psi_{\eta^2}) = \bra{\hf \xi^2 \bra{1-\eta^2}, \hf \eta^2 \bra{1-\xi^2}}, & 
\end{cases}
\end{eqnarray*}
$\U^i_c$, $(\U_\xi^i,\U_\eta^i)$, and $(\U_{\xi^2}^i,\U_{\eta^2}^i)$ are the value, first and second-order derivatives of $\U^i$ at the center of $\Omega$, which are obtained by interpolation and reconstruction as follows. 


By appling the 5-point integral formula 
\begin{eqnarray}
\U_\avg^i = \frac{1}{12 J^i_c}\bra{\sum_{r=1}^{4}J^i_r \U^i_r + 8 J^i_c \U^i_c},
\end{eqnarray}
on the reference cell $\Omega$, we can get the interpolation of $\U^i_c$ as 
\begin{eqnarray}
\U^i_c = \frac{1}{8 J^i_c}\bra{\displaystyle 12 J^i_c \U_\avg^i - \sum_{r=1}^{4}J^i_r\U^i_r},
\end{eqnarray}
where $J^i_r = \mathrm{Det}\bra{\frac{\partial \Phi^i}{\partial \bxi}(\bxi_r)}$ and $J^i_c = \mathrm{Det}\bra{\frac{\partial \Phi^i}{\partial \bxi}(\boldsymbol{0})}$ are determinants of $\Phi^i$. 

The derivatives $(\U_\xi^i,\U_\eta^i)$ and $(\U_{\xi^2}^i,\U_{\eta^2}^i)$ are reconstructed by $\omega^i$ and adjacent cells. 
Firstly, linear reconstruction on each cell surface is performed at computational domain. Take surface $\overline{\bx^i_r\bx^i_{r+1}} = \omega^i \cap \omega^j$ (see Fig. \ref{fig_Eij_recons}) for example, a linear least-square problem 
$$
\min_{\U^i_{r,r+1}(\bx)} |\U^i_{r,r+1}(\bx^i_c) - \U^i_c|^2 +|\U^i_{r,r+1}(\bx^j_c) - \U^j_c|^2 +|\U^i_{r,r+1}(\bx^i_r) - \U^i_r|^2 +|\U^i_{r,r+1}(\bx^i_{r+1}) - \U^i_{r+1}|^2 
$$
is solved, 
where $\U^i_{r,r+1}(\bx)$ is the local linear function defined on computational domain corresponding to surface $\overline{\bx^i_r\bx^i_{r+1}}$. By the transformation $\Phi^i(\bxi)$, $\U^i_{r,r+1}(\bxi) \coloneqq \U^i_{r,r+1}(\Phi^i(\bxi))$ is defined on reference cell $\Omega$, and 
\begin{eqnarray*}
(\U^i_{\xi r},\U^i_{\eta r}) \coloneqq \nabla_{\bxi}\U^i_{r,r+1}\bra{\hf\bxi_r+\hf\bxi_{r+1}}. 
\end{eqnarray*}
The derivatives at cell center is formulated as
\begin{eqnarray}
\label{cell_center_diff}
\begin{cases}\displaystyle
(\U_\xi^i,\U_\eta^i) =
\frac{1}{4} \sum_{r=1}^{4} (\U^i_{\xi r},\U^i_{\eta r}),
\\
\displaystyle
(\U_{\xi^2}^i,\U_{\eta^2}^i) =
\hf (\U^i_{\xi 2} - \U^i_{\xi 4},
\U^i_{\eta 3}-\U^i_{\eta 1}).
\end{cases}
\end{eqnarray}

Without causing ambiguity, we use $\U^i(x)$ to represent the reconstructed physical field on $\omega^i$ as $\U^i(\bx) = \U^i\circ(\Phi^i)^{-1}(\bx)$.

\begin{figure}[!htp]
\centering
\begin{tikzpicture}[scale=1]
\coordinate (Oa) at (0,0);
\coordinate (Ob) at (6,-2.3);

\coordinate (xi) at (-0.5,0.5);
\coordinate (xj) at (3.7,1.2);
\coordinate (xij1) at (3.2,-1.6);
\coordinate (dxi1) at (-2.7, -0.3);
\coordinate (dxi2) at (-1.95,-0.6);
\coordinate (xij2) at (0.2,2.5);
\coordinate (dxj1) at (2,0.8);
\coordinate (dxj2) at (2.4,0.2);

\draw [densely dashed] (xij1) -- (xi) -- (xij2) -- (xj) -- cycle;

\draw (xij1) ++ (dxi1) -- (xij1) -- (xij2) -- ++(dxi2);
\draw (xij1) ++ (dxj1) -- (xij1) -- (xij2) -- ++(dxj2);

\draw (xi) ++ (0.3, -1.5) node {$\omega^i$};
\draw (xj) ++ (0.6, -1) node {$\omega^j$};

\fill (xi) circle (1.5pt) node [above left] {$\bx^i_c$}
(xj) circle (1.5pt) node [above] {$\bx^j_c$}
(xij1) circle (1.5pt) node [below] {$\bx^i_r$}
(xij2) circle (1.5pt) node [above] {$\bx^i_{r+1}$};


\coordinate (xijc) at ($(xij1)!0.5!(xij2)$);
\coordinate (dphi) at (0.5,0.1);

\draw [line width =1pt,->] ($(xijc) - (dphi)$)
-- ($(xijc) + (dphi)$) node [above] {$\nabla_{\bx}\U^i_{r,r+1}$};
\end{tikzpicture}
\caption{gradient reconstruction on cell surface $\overline{\bx^i_r\bx^i_{r+1}}$}
\label{fig_Eij_recons}
\end{figure}

\subsection{Evolution of VIA moment and vertices} \label{chap:Lag:LMCV:VIAflux}
Similar to the EUCCLHYD \cite{Maire-EUCCLHYD}, (\ref{lag_vel^int}-\ref{lag_vol^int}) is discretized as 
\begin{eqnarray}
m^i\bra{\frac{1}{\bar\rho^i}}_t &=& \hf\sum_{r=1}^{4} L^i_{r,r+1}
\bra{\tbVi_{r}+\tbVi_{r+1}} \cdot \bN^i_{r,r+1},
\label{cell_evolve_vol} \\
m^i\bra{\bar{\bV}^i}_t &=&
-\hf\sum_{r=1}^{4} L^i_{r,r+1}
\bra{\tPsi_{r,r+\hf} + \tPsi_{r+\hf,r+1}}
\bN^i_{r,r+1},
\label{cell_evolve_vel} \\
m^i\bra{\bar{E}^i}_t &=&
-\hf\sum_{r=1}^{4} L^i_{r,r+1}
\bra{\tPsi_{r,r+\hf}\tbVi_{r}
+ \tPsi_{r+\hf,r+1}\tbVi_{r+1}}\cdot \bN^i_{r,r+1},
\label{cell_evolve_E}
\end{eqnarray}
where $m^i$ is the constant mass of cell $\omega^i$ according to \eqref{lag_rho}. 
$\bar\rho^i, \bar{\bV}^i, \bar{E}^i$ are obtained by 
\begin{eqnarray*}
\U_\avg^i = \begin{bmatrix}
\bar\rho^i\\
\bar\rho^i\bar\bV^i\\
\bar\rho^i\bar E^i
\end{bmatrix}.
\end{eqnarray*}
$L^i_{r,r+1}$ and $\bN^i_{r,r+1}$ are the length and the unit outward normal vector of surface $\overline{\bx^i_r \bx_{r+1}^i}$. 
Meanwhile, vertices $\bx^i_r$ are moving with local fluid  speed $\tbVi_r$
\begin{eqnarray}
\bra{\bx^i_r}_t = \tbVi_r, \; r=1,2,3,4,
\end{eqnarray}
which is compatible with \eqref{cell_evolve_vol} since the cell area $|\omega^i| = \frac{m^i}{\bar{\rho}^i}$. 
The nodal velocity $\tbVs$ and surface pressure $\tPs$ are solved by an augmented  nodal solver at Section \ref{chap:Lag:Nodal}.

\subsection{Evolution of PV moment} \label{chap:Lag:LMCV:PVflux}
The PV moment at $\bx^i_r$ is updated by solving \eqref{2DEuler} in differential form as 
\begin{eqnarray} \label{node_evolve_1}
\bra{\U^i_r}_t
+ \bra{\bF^i_r}_x - \tVi_{x,r} \bra{\U^i_r}_x
+ \bra{\bG^i_r}_y - \tVi_{y,r} \bra{\U^i_r}_y
= \boldsymbol{0},
\end{eqnarray}
where $[\bF^i_r,\bG^i_r] = \mathbb{F}(\U^i_r) $ and 
$\tbVi_{r} = (\tVi_{x,r},\tVi_{y,r})$. 
With Jacobian matrix of the flux defined by 
$\mathbb{A}  = \frac{\partial \bF}{\partial \U}(\U^i_r) - \tVi_{x,r}\mathbb{I}_4$, 
$\mathbb{B} = \frac{\partial \bG}{\partial \U}(\U^i_r) - \tVi_{y,r}\mathbb{I}_4$, 
\eqref{node_evolve_1} can be locally linearized as 
\begin{eqnarray} \label{node_evolve_linear}
\bra{\U^i_r}_t
+ \mathbb{A}\bra{\U^i_r}_x
+ \mathbb{B}\bra{\U^i_r}_y
= \boldsymbol{0},
\end{eqnarray}
which is discretized by local Lax-Friedrichs flux as 
\begin{eqnarray} \label{node_evolve_numerical}
\bra{\U^i_r}_t
+ \hf\brb{\mathbb{A}\hat{\U}_x(\bx^i_r)
- S_{\mathbb{A}} \bra{\hat{\U}_{x,R}(\bx^i_r)-\hat{\U}_{x,L}(\bx^i_r)}}
+ \hf\brb{\mathbb{B}\hat{\U}_y(\bx^i_r)
- S_{\mathbb{B}} \bra{\hat{\U}_{y,R}(\bx^i_r)-\hat{\U}_{y,L}(\bx^i_r)}}
= \boldsymbol{0},
\end{eqnarray}
where $S_\mathbb{K}$ is maximum norm of eigenvalues of matrix $\mathbb{K}$, approximated derivatives $\hat{\U}_x, \hat{\U}_y$ and one-sided derivatives $\hat{\U}_{x,L}, \hat{\U}_{x,R}, \hat{\U}_{y,L}, \hat{\U}_{y,R}$ at $\bx^i_r$ are reconstructed as {\bf Appendix \hyperlink{Apd.PV}{C} } shown. 

In summary, LMCV defines four point-values (PVs) $\{\U^i_r\; |\; r=1,2,3,4 \}$ at four vertices and one volume-integrated average (VIA) $\U^i_{\avg}$ for each cell $\omega^i$, which are evolved separately by (\ref{cell_evolve_vol}-\ref{cell_evolve_E}) and \eqref{node_evolve_numerical}. Conservative variable $\U$ is reconstructed on cell $\omega^i$ as $\U^i(\bxi)$ by \eqref{cell^intp}, based on the PVs and VIA of adjacent cells.

For each cell $\omega^i$, apart from geometry information, our scheme  requires values of $\U$ at each vertices $\bx^i_r$ and each middle points $\bx^i_{r+\hf} \coloneqq \hf\bx^i_r + \hf\bx^i_{r+1}$ as input, denoted as $\U^i_r$ and $\U^i_{r+\hf}$, for $r=1,2,3,4$. In LMCV, these values are computed by reconstruction function $\U^i(\bxi)$, that is 
$$
\U^i_r = \U^i(\bxi^i_r), \quad \U^i_{r+\hf} = \U^i\bra{\hf\bxi^i_r+\hf\bxi^i_{r+1}}.
$$
We denote the velocity, pressure and acoustic impedance of $\U^i_r$ as 
$\bVi_r$, $P^i_r$ and $\alpha^i_r$. In the same way, $\bV^i_{r+\hf}$, $P^i_{r+\hf}$ and $\alpha^i_{r+\hf}$ are defined for $\U^i_{r+\hf}$. For any other cell-centered Lagrangian schemes, our scheme  is available as long as $\U^i_r$ and $\U^i_{r+\hf}$ can be provided, whether by reconstruction, interpolation or any method else.

\section{Riemann solvers and high-order numerical fluxes} \label{Chap:Lag:Revisit}

\subsection{The HLLC approximate solver}  \label{Chap:Lag:Revisit:1D}
It is well known that the numerical flux in (\ref{cell_evolve_vol}-\ref{cell_evolve_E}) can be obtained from many methods. Most of them is based on the solution of extended 1D Riemann problem along the outward normal direction of cell edges. Let $(\bN,\bT)$ be the unit normal and tangent of edge, then define $(u , v)= (\bV\cdot \bN,  \bV\cdot \bT)$. 
A Riemann problem is set
\begin{eqnarray} \label{2.1}
{\bf U}_{t}+{\bf F}_{x} = 0,
\end{eqnarray}
along $x$-axis with initial values
\begin{eqnarray*}
{\bf U}(x,0)  = \left\{\begin{aligned}
{\bf U}_L = (\rho_L,\rho_L u_L,\rho_L v_L,\rho_L E_L)^\top,\quad x<0,\\
{\bf U}_R = (\rho_R,\rho_R u_R,\rho_R v_R,\rho_R E_R)^\top,\quad x>0.
\end{aligned}
\right.
\end{eqnarray*}
The widely used HLLC solver approximates the Riemann solution by four states as 
\begin{eqnarray}\label{rup_HLLC}
(\U,\bF)(x,t) = \left\{
\begin{array}{lcl}
(\U_L,\bF_L),          &      & {\mbox {if } \ x/t     \leq  S_L},\\
(\tilde{\U}_L,\tbF_L),     &      & {\mbox {if } \  S_L < x/t \leq  \ustar},\\
(\tilde{\U}_R,\tbF_R),     &      & {\mbox {if } \  \ustar < x/t \leq S_R },\\
(\U_R,\bF_R),     &      & {\mbox {if } \ S_R <x/t}.
\end{array} \right.
\end{eqnarray}
with 
\begin{gather*}
\tilde{\U}_K = (\tilde{\rho}_K,\tilde{\rho}_K\ustar,\tilde{\rho}_K v_K,\tilde{\rho}_K \widetilde{E}_K)^\top, \quad 
\tbF_K = \ustar\tilde{\U}_K+(0,\tP,0,\ustar\tP)^\top \quad K=L,R.
\end{gather*}
where $S_L,S_R$ are approximated wave speed \cite{Batten-choice}. 
Based on the Rankine–Hugoniot condition 
\begin{eqnarray}\label{rup_HLLC_rh}
\bF_K - \tbF_K = S_K(\U_K - \tilde{\U}_K).
\end{eqnarray}
Let $\alpha_K = \rho_K c_K$ be the acoustic impedance ($c_L = u_L-S_L,c_R = S_R-u_R $), the velocity and pressure of the contact discontinuity are

\begin{eqnarray}
\label{contavepre1}
\begin{cases} \displaystyle
\ustar = \frac{\alpha_Lu_L+\alpha_Ru_R + P_L -P_R}{\alpha_L + \alpha_R}, \\
\tP = P_L-\alpha_L(\ustar-u_L)  .
\end{cases}
\end{eqnarray}


The rest unknown quantities in $(\tilde{\U}_K,\tbF_K) $ cab be obtained by Eqs. (\ref{rup_HLLC_rh}) and Eqs. (\ref{contavepre1}).



\subsection{Revisit a 2D nodal solver}\label{Chap:Lag:Revisit:Maire}
For comparison purpose, the  original nodal solver from Maire et al. \cite{Maire-EUCCLHYD} is revisited. For any half surface between two cells $e = \overline{\bx^i_r\bx^i_{r+\hf}} = \overline{\bx^j_{k-\hf}\bx^j_{k}} \subset \omega^i\cap \omega^j$, the numerical pressures $\tPs$ on both sides are denoted as $\tPs_{e+} = \tPsi_{r,r+\hf}$ and $\tPs_{e-} = \tPsj_{k-\hf,k}$, and $L_e$ and $\bN_e$ are the length and the unit outward normal vector of half surface $e$, the normal direction is relative to $\omega^i$ (see Fig. \ref{fig_NodalSolver}). 

\noindent
\textbf{Jump condition: } The pressure on each half surface of $\omega^i$ following Rankine–Hugoniot (RH) condition
\begin{eqnarray}\label{maire_pres}
\left\{
\begin{aligned}
& \tPs_{e+} =\tP^i_{r,r+\hf} = P^{i}_{r} 
- \alpha^{i}_{r} \bra{\tbV^i_r - \bV^{i}_{r}}\cdot \bN^i_{r,r+1}
=P_{e+} - \alpha_{e+}\bra{\tbV^i_r - \bV^{i}_{r}}\cdot \bN_{e},
\\
& \tPs_{e-} =\tP^j_{k-\hf,k} = P^{j}_{k} 
- \alpha^{j}_{k} \bra{\tbV^j_k - \bV^{j}_{k}}\cdot \bN^j_{k-1,k}=
P_{e-} + \alpha_{e-}\bra{\tbV^j_k - \bV^{j}_{k}}\cdot \bN_{e}, 
\end{aligned}
\right. 
\end{eqnarray}
where
\begin{gather*}
P_{e+} = P^i_r,\quad P_{e-} = P^j_k,\quad
\alpha_{e+} = \alpha^i_r, \quad \alpha_{e-} = \alpha^j_k.
\end{gather*}

\noindent
\textbf{Balance condition: }
The balance of the pressures  is expressed as
\begin{eqnarray}
\sum_{e\in \cE(\bx^i_r) }L_e\bra{\tPs_{e+}-
\tPs_{e-}}\bN_e=\boldsymbol{0},\label{maire_balance}
\end{eqnarray}
where $\cE(\bx^i_r)$ contains all half surface with $\bx^i_r$ as a vertex. 

Substitute Eqs.\eqref{maire_pres} into Eqs.\eqref{maire_balance}, $\tbVi_r$ is acquired by solving the linear system 
\begin{eqnarray} \label{maire_vel}
\sum_{e\in \cE(\bx^i_r)} L_e
\bra{{\alpha}_{e+}+{\alpha}_{e-}}
\bra{\tbVi_r\cdot\bN_{e}-\cVs_{e}}\bN_{e} = \boldsymbol{0},
\end{eqnarray}
where
\begin{gather} \label{normal_vel}
\cVs_{e} = \frac{
{\alpha}_{e+} \bV^i_r\cdot\bN_{e}
+ {\alpha}_{e-} \bV^j_k\cdot\bN_{e}
+ P_{e+} - P_{e-}
}
{{\alpha}_{e+} + {\alpha}_{e-}},
\end{gather}
which is the normal velocity given by classical 1D acoustic Riemann solver at the endpoint $\bx^i_r$ of edge $\overline{\bx^i_r\bx^i_{r+1}}$. Meanwhile, Eqs.\eqref{maire_vel} can be interpreted as a least square problem
\begin{eqnarray} \label{maire_vel_lsq}
\tbVi_{r} = \underset{\bV\in\mathbb{R}^2}{\arg \min}\sum_{e\in \cE(\bx^i_r)} L_e
\bra{{\alpha}_{e+}+{\alpha}_{e-}}
\bra{\bV\cdot\bN_{e}-\cVs_{e}}^2.
\end{eqnarray}

Noticing that this solver gives $\tbVs^i_r = \bV^i_r$ with smooth physical field, which indicates that the numerical flux of each cell cannot exceed second-order accuracy for straight-edge mesh, as discussed in {\bf Appendix \hyperlink{Apd.2rd}{A} }. 

\subsection{Incompatibility between high-order numerical fluxes and mesh movement}   
In this subsection, we consider the fundamental fluxes and its numerical integrations in cell-centered Lagrangian schemes. 
Consider, for example, any $\omega^i$，
\begin{eqnarray} \label{lag^int}
\frac{\dd}{\dd t} \bU_\avg^i + \frac{1}{m^i}\int_{\partial\omega^i} \bar{\mathbb{F}} \bN \dd l = \boldsymbol{0},
\end{eqnarray}
where 
\begin{eqnarray*}
\bU = \begin{bmatrix}
1/\rho \\ \bV \\ E
\end{bmatrix}, \quad 
\bar{\mathbb{F}} = \begin{bmatrix}
-\bV^\top \\ P \mathbb{I}_2 \\ P\bV^\top
\end{bmatrix}.
\end{eqnarray*}
and $\bU_\avg^i$ reads as the mass averaged value of $\bU$ over $\omega^i$ as 
$\frac{1}{m^i}\int_{\omega^i} \rho \bU \dd \omega$. 
A variety of cell-centered Lagrangian schemes take their starting point from discretizing \eqref{lag^int} and  choosing different the numerical flux $\bar{\mathbb{F}}$ and the treatment of the boundary integral $\int_{\partial\omega^i}$. Distinct from the Euler and ALE methods, the precision of the integral cannot serve as the only factor used to evaluate the correctness of Eqn. \eqref{lag^int} .
The specific volume in the Lagrangian framework depends on  rate of change of each cell area, which means high-accuracy Lagrangian methods also require a more elaborate representation of $\partial\omega^i$.
In general, a standard procedure define cell boundaries by straight line edges, whereas some numerical schemes adopt curvilinear edges to elevate accuracy, which consequently means that extra shape control points are introduced for each edge (see Fig. \ref{Fig.ploycell}). 

{\color{black}
\begin{figure}[!htp]
\centering
\begin{tikzpicture}[scale=1]
\coordinate (Oa) at (0,0);
\coordinate (Ob) at (6,0);

\coordinate (x1) at (1.1,0.6);
\coordinate (x12) at (3.5,1);
\coordinate (x23) at (-1,2.5);
\coordinate (x34) at (-2.7,-0.2);
\coordinate (x5) at (
$(x1)+0.75*(x12)+0.5*(x23)+0.25*(x34)$);

\draw (Oa) ++ (x1) node {\Large$\bullet$}
-- ++ (x12) node {\Large$\bullet$}
-- ++ (x23) node {\Large$\bullet$}
-- ++ (x34) node {\Large$\bullet$}
-- cycle;


\draw (Oa)++(x5) node [below] {$\omega^i$};

\draw (Oa)++(x5)++(0.2,-2.5) node {\small(a) Straight line edges.};

\draw[-] (Ob)++(x1) to[bend right=15] 
node[pos=1/3] {\tiny $\blacksquare$}
node[pos=2/3] {\tiny $\blacksquare$}
++(x12) node {\Large$\bullet$}
to[bend right=20] 
node[pos=1/3] {\tiny $\blacksquare$}
node[pos=2/3] {\tiny $\blacksquare$}
++ (x23) node {\Large$\bullet$}
to[bend left=15]
node[pos=1/3] {\tiny $\blacksquare$}
node[pos=2/3] {\tiny $\blacksquare$}
++ (x34) node {\Large$\bullet$}
to[bend left=20] 
node[pos=1/3] {\tiny $\blacksquare$}
node[pos=2/3] {\tiny $\blacksquare$}
cycle node {\Large$\bullet$}; 

\draw (Ob)++(x5) node [below] {$\omega^i$};

\draw (Ob)++(x5)++(0.2,-2.5) node {\small(b) Curved line edges.};

\end{tikzpicture}
\caption{General cells.}
\label{Fig.ploycell}
\end{figure}
}

By means of quadrature rules, the cell boundary integral is usually discretized as 
\begin{eqnarray} \label{lag^int_2}
\frac{\dd}{\dd t}\int_{\omega^i} {\bU} \dd \omega + \frac{1}{m^i}\sum_{q\in \mathcal{Q}^i} l_q \bar{\mathbb{F}}_q \bN_q = \boldsymbol{0},
\end{eqnarray}
where $\mathcal{Q}^i$ is the integral node set of cell $\omega^i$, $\bN_q$ and $l_q$ are the unit normal vector and weight length at node $q$, $\bar{\mathbb{F}}_q$ is the numerical flux computed at $q$ with certain Riemann solver. Moreover, the shape control points of $\omega^i$ should be advected through 
\begin{eqnarray}
\frac{\dd}{\dd t}\bx_q = \bV_q,\quad \forall q\in \mathcal{P}^i,
\end{eqnarray}
where $\mathcal{P}^i$ is the shape control point set of $\omega^i$. Without loss of generality, it is assumed that $\mathcal{P}^i\subset \mathcal{Q}^i$ so that $\bV_q$ can be acquired by the Riemann solver directly or indirectly. As shown in Fig. \ref{Fig.cpoint}, differences in local mesh topology of $\bx_q$ make traditional 1D Riemann solvers inadequate for all cases. 

\begin{figure}[!htp]
\centering
\begin{tikzpicture}[scale=1]
\coordinate (Oa) at (0,0);
\coordinate (Ob) at (6,0);

\coordinate (x1) at (0,0);
\coordinate (x2) at (3.5,1);
\coordinate (x1a) at (-1,-0.2);
\coordinate (x1b) at (-0.3,-1);
\coordinate (x1c) at (-0.5,1.2);
\coordinate (x2a) at (1,-0.7);
\coordinate (x2b) at (0,-1.5);
\coordinate (x2c) at (-0.5,1);

\draw (x1) to [bend left=35] ++(x1a);
\draw (x1) to [bend left=20] ++(x1b);
\draw (x1) to [bend right=20] ++(x1c);

\draw (x2) to [bend left=15] ++(x2a);
\draw (x2) to [bend left=15] ++(x2b);
\draw (x2) to [bend right=20] ++(x2c);

\node[above] at ($0.5*(x1)+0.5*(x1b)+0.5*(x2)+0.5*(x2b)$) {$\omega^i$};
\node at ($0.5*(x1)+0.5*(x1c)+0.5*(x2)+0.5*(x2c)$) {$\omega^j$};

\draw (x1) 
node {\Large$\bullet$}
to[bend left=30] 
node[pos=1/3] {\color{red}\tiny$\blacksquare$}
node [pos=1/3,below] {$\bx_q$}
node[pos=2/3] {\tiny$\blacksquare$}
++(x2) 
node {\Large$\bullet$};

\coordinate (x3) at (8,0.5);
\coordinate (x3a) at (-1.5,-0.2);
\coordinate (x3b) at (1.2,-0.6);
\coordinate (x3c) at (-0.2,1.3);
\coordinate (x3d) at (-0.4,-1.2);

\draw (x3) to [bend right = 20] 
node[pos=2/3] {\tiny $\blacksquare$}
++(x3a);
\draw (x3) to [bend left = 15] 
node[pos=2/3] {\tiny $\blacksquare$}
++(x3b);
\draw (x3) to [bend right = 15] 
node[pos=2/3] {\tiny $\blacksquare$}
++(x3c);
\draw (x3) to [bend left = 15] 
node[pos=2/3] {\tiny $\blacksquare$}
++(x3d);

\node at (x3) {\color{red}\Large$\bullet$};
\node[above right] at (x3) {$\bx_q$};
\node at ($(x3)+0.7*(x3b)+0.7*(x3d)$) {$\omega^i$};
\node at ($(x3)+0.7*(x3a)+0.7*(x3c)$) {$\omega^k$};
\node at ($(x3)+0.7*(x3a)+0.7*(x3d)$) {$\omega^l$};
\node at ($(x3)+1*(x3c)+1*(x3b)$) {$\omega^j$};

\node at ($0.5*(x1)+0.5*(x2)+(0,-2)$) {\small(a) Face point neighboring cells};
\node at ($(x3)+(0,-2)$) {\small(b) Node neighboring cells};

\end{tikzpicture}
\caption{Control points with neighboring cells.}
\label{Fig.cpoint}
\end{figure}

In order to give consistent numerical fluxes and velocities at the cell vertices without losing conservation or other properties, various multidimensional Riemann solvers \cite{Despres-GLACE,Maire-EUCCLHYD,Maire-HighOrder} have been developed for Lagrangian schemes, which directly handle the calculation of nodal velocities on straight-edge meshes. However, complexities arise when addressing curvilinear meshes, where velocities need be solved at both corners and edges.

Throughout schemes \cite{Cheng-3rd, Liu-subcellLDG, Morgan-LDG,Morgan-LDG-2,Vilar-LDG} mentioned in Section \ref{chap:Lag:Intro}, high-order accuracy is achieved by introducing curvilinear mesh to basic schemes \cite{Despres-GLACE,Maire-EUCCLHYD,Maire-HighOrder}, while robustness is secured by post-processing or pre-processing techniques. Nonetheless, in the following, we propose an interesting approach to obtain high accuracy directly through pre-processing technique instead of introducing curvilinear meshes, which also avoids the destruction of robustness. 
\subsection{A new path constructing nodal solver}\label{Chap:Lag:Revisit:ANS}
In the past, on the surface $\overline{\bx^i_r\bx^i_{r+1}}$,
most high-order Lagrangian scheme noticed that only two solved Riemann problem at both ends $\bx^i_r$, $\bx^i_{r+1}$ are not enough to provide enough accuracy. For third-order schemes \cite{Vilar-LDG,Morgan-LDG,Liu-subcellLDG}, an extra Riemann problem is usually introduced at middle point $\bx^i_{r+\hf}$,  but instead of being solved by the standard 1D HLLC solver \cite{Vilar-LDG,Morgan-LDG} or the 2D nodal solver \cite{Liu-subcellLDG}, it is allowed for pressure discontinuities, just like the Riemann solution at the endpoints (see Fig. \ref{fig_pABC}), defined in consistent form as 
\begin{eqnarray}
\left \{ 
\begin{aligned}
    &\tP^i_r =  P^{i}_{r} - \alpha^{i}_{r} \bra{\tbVi_r - \bV^{i}_{r}}\cdot \bN^i_{r,r+1}, \\
    &\tP^i_{r+\hf} = P^{i}_{r+\hf} - \alpha^{i}_{r+\hf} \bra{\hf\tbVi_r+\hf\tbVi_{r+1} - \bV^{i}_{r+\hf}}\cdot \bN^i_{r,r+1},\\
    &\tP^i_{r+1} =  P^{i}_{r+1} - \alpha^{i}_{r+1} \bra{\tbVi_r - \bV^{i}_{r+1}}\cdot \bN^i_{r,r+1}.
\end{aligned}
\right.
\end{eqnarray}

{\color{black}
\begin{figure}[!htp]
\centering
\begin{subfigure}{0.32\linewidth}
\centering
\begin{tikzpicture}[scale=1]
\coordinate (X0) at (0,0);
\coordinate (X1) at (0,3);
\coordinate (XL) at (-1.3,1.5);
\coordinate (XR) at (1.3,1.5);
\coordinate (X0L) at (-1.5,-0.5);
\coordinate (X0R) at (1.5,-0.5);
\coordinate (X1L) at (-1.5,3.5);
\coordinate (X1R) at (1.5,3.5);

\draw (X0) node[yshift=-20] {$\bx^i_r=\bx^j_k$}
-- (X1) node[yshift=20] {$\bx^i_{r+1} = \bx^j_{k-1}$};
\draw (X0L) -- (X0) -- (X0R);
\draw (X1L) -- (X1) -- (X1R);

\draw (XL) node {$\omega^i$};
\draw (XR) node {$\omega^j$};

\fill (X0) circle [radius=0.07]
(X1) circle [radius=0.07];

\coordinate (n) at ($(X0)!1cm!-90:(X1)-(X0)$);
\coordinate (dn0) at ($0.1*(n)$);
\draw ($(X0)!0.03!(X1) + (dn0)$)
--    ($(X0)!0.48!(X1) + (dn0)$);
\node[right] at ($(X0)!0.25!(X1) + (dn0)$)
{$\tP^j_{k}$};
\draw ($(X0)!0.03!(X1) - (dn0)$)
--    ($(X0)!0.48!(X1) - (dn0)$);
\node[left] at ($(X0)!0.25!(X1) - (dn0)$)
{$\tP^i_{r}$};

\draw ($(X0)!0.52!(X1) + (dn0)$)
--    ($(X0)!0.97!(X1) + (dn0)$);
\node[right] at ($(X0)!0.75!(X1) + (dn0)$)
{$\tP^j_{k-1}$};
\draw ($(X0)!0.52!(X1) - (dn0)$)
--    ($(X0)!0.97!(X1) - (dn0)$);
\node[left] at ($(X0)!0.75!(X1) - (dn0)$)
{$\tP^i_{r+1}$};

\draw (0,-1.5) node {\small $\tP^i_{r} \ne \tP^j_k$, $\tP^i_{r+1} \ne \tP^j_{k-1}$};
\end{tikzpicture}
\caption{Schemes using nodal solver from Maire et al.}\label{fig_pA}
\end{subfigure}
\begin{subfigure}{0.32\linewidth}
\centering
\begin{tikzpicture}[scale=1]
\coordinate (X0) at (0,0);
\coordinate (X1) at (0,3);
\coordinate (Xa) at (0.1,0.5);
\coordinate (Xb) at (0.1,2.5);
\coordinate (XL) at (-1.1,1.5);
\coordinate (XR) at (1.5,1.5);
\coordinate (X0L) at (-1.5,-0.5);
\coordinate (X0R) at (1.5,-0.5);
\coordinate (X1L) at (-1.5,3.5);
\coordinate (X1R) at (1.5,3.5);

\draw (X0) -- (Xa);
\draw (X1) -- (Xb);
\node[yshift=-20] at (X0) {$\bx^i_r=\bx^j_k$};
\node[yshift=20] at (X1) {$\bx^i_{r+1} = \bx^j_{k-1}$};

\draw (X0L) to [bend left=8] (X0);
\draw (X0) to [bend left=12] (X0R);
\draw (X1L) to [bend left=-10] (X1);
\draw (X1) to [bend left=-10] (X1R);

\draw (XL) node {$\omega^i$};
\draw (XR) node {$\omega^j$};

\fill (X0) circle [radius=0.07]
(X1) circle [radius=0.07];

\coordinate (n) at ($(X0)!1cm!-90:(X1)-(X0)$);
\coordinate (dn0) at ($0.1*(n)$);
\draw ($(X0)!0.1!(Xa) + (dn0)$)
--    ($(X0)!0.9!(Xa) + (dn0)$);
\node[right] at ($(X0)!0.5!(Xa) + (dn0)$)
{$\tP^j_{k}$};
\draw ($(X0)!0.1!(Xa) - (dn0)$)
--    ($(X0)!0.9!(Xa) - (dn0)$);
\node[left] at ($(X0)!0.5!(Xa) - (dn0)$)
{$\tP^i_{r}$};

\draw [line width = 2] (Xa) to [bend right = 10] (Xb);
\node at ($(XL)!0.72!(XR)$)
{$\tP^j_{k-\hf}$};
\node at ($(XL)!0.28!(XR)$)
{$\tP^i_{r+\hf}$};

\draw ($(Xb)!0.1!(X1) + (dn0)$)
--    ($(Xb)!0.9!(X1) + (dn0)$);
\node[right] at ($(Xb)!0.5!(X1) + (dn0)$)
{$\tP^j_{k-1}$};
\draw ($(Xb)!0.1!(X1) - (dn0)$)
--    ($(Xb)!0.9!(X1) - (dn0)$);
\node[left] at ($(Xb)!0.5!(X1) - (dn0)$)
{$\tP^i_{r+1}$};

\draw (0,-1.5) node {\small $\tP^i_{r} \ne \tP^j_k$, $\tP^i_{r+\hf}=\tP^j_{k-\hf}$, $\tP^i_{r+1} \ne \tP^j_{k-1}$};
\end{tikzpicture}
\caption{Schemes using mixed solvers}\label{fig_pB}
\end{subfigure}
\begin{subfigure}{0.32\linewidth}
\centering
\begin{tikzpicture}[scale=1]
\coordinate (X0) at (0,0);
\coordinate (X1) at (0,3);
\coordinate (XL) at (-1.3,1.5);
\coordinate (XR) at (1.3,1.5);
\coordinate (X0L) at (-1.5,-0.5);
\coordinate (X0R) at (1.5,-0.5);
\coordinate (X1L) at (-1.5,3.5);
\coordinate (X1R) at (1.5,3.5);

\draw (X0) node[yshift=-20] {$\bx^i_r=\bx^j_k$}
-- (X1) node[yshift=20] {$\bx^i_{r+1} = \bx^j_{k-1}$};
\draw (X0L) -- (X0) -- (X0R);
\draw (X1L) -- (X1) -- (X1R);

\draw (XL) node {$\omega^i$};
\draw (XR) node {$\omega^j$};

\fill (X0) circle [radius=0.07]
(X1) circle [radius=0.07];

\coordinate (n) at ($(X0)!1cm!-90:(X1)-(X0)$);
\coordinate (dn0) at ($0.1*(n)$);
\draw ($(X0)!0.03!(X1) + (dn0)$)
--    ($(X0)!0.15!(X1) + (dn0)$);
\node[right] at ($(X0)!0.08!(X1) + (dn0)$)
{$\tP^j_{k}$};
\draw ($(X0)!0.03!(X1) - (dn0)$)
--    ($(X0)!0.15!(X1) - (dn0)$);
\node[left] at ($(X0)!0.08!(X1) - (dn0)$)
{$\tP^i_{r}$};

\draw ($(X0)!0.18!(X1) + (dn0)$)
--    ($(X0)!0.82!(X1) + (dn0)$);
\node[right] at ($(X0)!0.5!(X1) + (dn0)$)
{$\tP^j_{k-\hf}$};
\draw ($(X0)!0.18!(X1) - (dn0)$)
--    ($(X0)!0.82!(X1) - (dn0)$);
\node[left] at ($(X0)!0.5!(X1) - (dn0)$)
{$\tP^i_{r+\hf}$};

\draw ($(X0)!0.85!(X1) + (dn0)$)
--    ($(X0)!0.97!(X1) + (dn0)$);
\node[right] at ($(X0)!0.92!(X1) + (dn0)$)
{$\tP^j_{k-1}$};
\draw ($(X0)!0.85!(X1) - (dn0)$)
--    ($(X0)!0.97!(X1) - (dn0)$);
\node[left] at ($(X0)!0.92!(X1) - (dn0)$)
{$\tP^i_{r+1}$};

\draw (0,-1.5) node {\small $\tP^i_{r} \ne \tP^j_k$, $\tP^i_{r+\hf} \ne\tP^j_{k-\hf}$, $\tP^i_{r+1} \ne \tP^j_{k-1}$};
\end{tikzpicture}
\caption{Schemes using our nodal solver}\label{fig_pC}
\end{subfigure}
\caption{Surface pressures on $\omega^i\cap \omega^j$ defined in different schemes. (a) EUCCLHYD \cite{Maire-EUCCLHYD} defines two different pressures for each half surface. (b) Schemes \cite{Vilar-LDG,Morgan-LDG} using both 2D nodal solver and 1D solver at each surface. The nodal solver is applied at both ends, introducing two pressures for each end. The 1D solver is used to provide both the pressure and the normal velocity at the middle point. The nonlinearity of normal velocities bends the cell surface. (c) Our augmented nodal solver introduces three pressures at each side, which are reassigned into two half surface pressures as Eqn. \eqref{ANS_pres_assign}. The discontinuity of pressure at the middle point allows us to maintain the local linearity of normal velocity. }\label{fig_pABC}
\end{figure} 
}

According to Simpson's rule to obtain high accuracy, then we still divided it into two half surfaces

\begin{eqnarray} \label{ANS_pres_assign}
  \ah\tP^i_r + \fh\tP^i_{r+\hf} + \ah \tP^i_{r+1}=   \hf\tP^i_{r,r+\hf} + \hf\tP^i_{r+\hf,r+1}.
\end{eqnarray}
It comes naturally the middle term is written as 
\begin{eqnarray} \label{ANS_pres_split}
    \tP^i_{r+\hf} =  \frac{1}{2}\brb{P^{i}_{r+\hf}- \alpha^{i}_{r+\hf}\bra{\tbVi_r - \bVi_{r+\hf}}\cdot\bN^i_{r,r+1} - \delta \tP^i_{r+\hf}}+
    \frac{1}{2}\brb{P^{i}_{r+\hf}-\alpha^{i}_{r+\hf} \bra{\tbVi_{r+1} - \bV^{i}_{r+\hf}}\cdot \bN^i_{r,r+1}+\delta \tP^i_{r+\hf}},
\end{eqnarray}
with $\delta \tP^i_{r+\hf}$ undetermined. Accordingly, $\tP^i_{r,r+\hf}, \tP^i_{r+\hf,r+1}$ shall follow the form of 
\begin{small}
\begin{eqnarray} \label{ANS_pres_raw}
\left\{
\begin{aligned}
    \tP^i_{r,r+\hf} &= \bra{\frac{1}{3}P^{i}_{r} +\frac{2}{3} P^{i}_{r+\hf}}
-
\bra{\frac{1}{3}\alpha^{i}_{r}+\frac{2}{3}\alpha^{i}_{r+\hf}}\bra{\tbVi_r - \bVi_{r}}\cdot \bN^i_{r,r+1}
- \fh\brb{\alpha^{i}_{r+\hf}\bra{\bVi_{r+\hf} - \bVi_{r}}\cdot \bN^i_{r,r+1} + \delta \tP^i_{r+\hf}},  \\
    \tP^i_{r+\hf,r+1} &=  \bra{\frac{1}{3}P^{i}_{r+1} +\frac{2}{3} P^{i}_{r+\hf}} -\bra{\frac{1}{3}\alpha^{i}_{r+1}+\frac{2}{3}\alpha^{i}_{r+\hf}}\bra{\tbVi_{r+1} - \bVi_{r+1}}\cdot \bN^i_{r,r+1}
- \fh\brb{\alpha^{i}_{r+\hf}\bra{\bVi_{r+\hf} - \bVi_{r+1}}\cdot \bN^i_{r,r+1} - \delta \tP^i_{r+\hf}}.
\end{aligned}
\right.
\end{eqnarray}
\end{small}
Based on symmetry, we choose $\delta\tP^i_{r+\hf}$ to satisfy 
\begin{eqnarray} \label{ANS_dp}
\alpha^{i}_{r+\hf}\bra{\bVi_{r+\hf} - \bVi_{r}}\cdot \bN^i_{r,r+1} + \delta \tP^i_{r+\hf} = \alpha^{i}_{r+\hf}\bra{\bVi_{r+\hf} - \bVi_{r+1}}\cdot \bN^i_{r,r+1} - \delta \tP^i_{r+\hf}, 
\end{eqnarray}
that is $\delta \tP^i_{r+\hf} = \hf \alpha^i_{r+\hf}\bra{\bVi_r - \bVi_{r+1}}\cdot \bN^i_{r,r+1}$. 

For a half surface between two cells $e = \overline{\bx^i_r\bx^i_{r+\hf}} = \overline{\bx^j_{k-\hf}\bx^j_{k}} \subset \omega^i\cap \omega^j$, the numerical pressures $\tPs$ on both sides are denoted as $\tPs_{e+} = \tPsi_{r,r+\hf}$ and $\tPs_{e-} = \tPsj_{k-\hf,k}$, and $L_e$ and $\bN_e$ are the length and the unit outward normal vector of half surface $e$, the normal direction is relative to $\omega^i$ (see Fig. \ref{fig_NodalSolver}). Substituting \eqref{ANS_dp} into \eqref{ANS_pres_raw}, we get 
\noindent
\textbf{Jump condition: } 
\begin{eqnarray}\label{maire_pres_hoo}
\left\{
\begin{aligned}
&\tP_{e+}= \tP^i_{r,r+\hf} =
\bra{\bh P^{i}_{r} +\fh P^{i}_{r+\hf} +\alpha^{i}_{r+\hf} w^{i}_{r+\hf}}
-
\bra{\bh \alpha^{i}_{r}+\fh \alpha^{i}_{r+\hf}}\bra{\tbVi_r - \bV^{i}_{r}}\cdot \bN^i_{r,r+1}              \\
&\tP_{e-} = \tP^j_{k-\hf,k} =
\bra{\bh P^{j}_{k} +\fh P^{j}_{k-\hf} + \alpha^{j}_{k-\hf} w^{j}_{k-\hf}} - 
\bra{\bh \alpha^{j}_{k} + \fh \alpha^{j}_{k-\hf}}
\bra{\tbVi_r - \bV^{j}_{k}}\cdot \bN^j_{k-1,k}
\end{aligned}
\right. ,
\end{eqnarray}
where
\begin{eqnarray*}
w^{i}_{r+\hf} = \fh
\bra{\bV^{i}_{r+\hf}-\hf\bV^{i}_{r} -\hf\bV^{i}_{r+1}}\cdot \bN^i_{r,r+1}, \quad w^{j}_{k-\hf} = \fh
\bra{\bV^{j}_{k-\hf}-\hf\bV^{j}_{k} -\hf\bV^j_{k-1}}\cdot \bN^j_{k-1,k}.
\end{eqnarray*}
In order to maintain conservation, pressures around each vertex should be balanced, that is 

\noindent
\textbf{Balance condition: }
\begin{eqnarray}\label{maire_balance_hoo}
\sum_{e\in \cE(\bx^i_r) }L_e\bra{\tPs_{e+}-
\tPs_{e-}}\bN_e=\boldsymbol{0},
\end{eqnarray}
where $\cE(\bx^i_r)$ contains all half surface with $\bx^i_r$ as a vertex. 
{\color{black}
\begin{figure}[!htp]
\centering
\begin{subfigure}{0.48\linewidth}
\centering
\begin{tikzpicture}[scale=0.9]
\coordinate (Mr0p) at (1.3,-1);
\coordinate (Mr0) at (3,-0.8);
\coordinate (Mr) at (2,2);
\coordinate (Mr1) at (-2,3);
\coordinate (Mr1p) at (-2.2,1.2);

\node at (-1,0) {$\omega^i$};
\draw [densely dashed] (Mr0p) -- (Mr0);
\draw  (Mr0) node[right] {$\bx^i_{r-1}$}
-- (Mr)  node[right] {$\bx^i_r$}
-- (Mr1) node[above] {$\bx_{r+1}^i$};
\draw [densely dashed](Mr1)-- (Mr1p);
\fill (Mr0) circle [radius=0.05]
(Mr) circle [radius=0.05]
(Mr1) circle [radius=0.05];
\coordinate (Mc0) at ($(Mr)!0.5!(Mr0)$);
\fill (Mc0) circle [radius=0.04];
\node[font=\small, below right] at (Mc0) {$\bx^i_{r-\hf}$};
\coordinate (Mc1) at ($(Mr)!0.5!(Mr1)$);
\fill (Mc1) circle [radius=0.04];
\node[font=\small, above right] at ($(Mc1)+(0,-0.15)$) {$\bx^i_{r+\hf}$};

\coordinate (Vr) at (0.7,1);
\draw[-{Latex}] (Mr) -- ++(Vr) node[right] {$\tbVi_r$};

\coordinate (dn0) at (-0.1,-0.05);
\draw[{Latex[width=0.08cm]}-{Latex[width=0.08cm]}]
($(Mr)!0.03!(Mr0) + (dn0)$) -- ($(Mr)!0.97!(Mr0) + (dn0)$);
\node[below left] at ($(Mr0)!0.5!(Mr)$) {$L^i_{r-1,r}$};

\coordinate (Mc0) at ($(Mr0)!0.5!(Mr)$);
\coordinate (Mn0) at ($(Mc0)!1cm!90:(Mr0)$);
\draw[-{Latex}] (Mc0) -- (Mn0) node[right] {$\bN^i_{r-1,r}$};

\coordinate (Mc1) at ($(Mr1)!0.5!(Mr)$);
\coordinate (Mn1) at ($(Mc1)!1cm!90:(Mr)$);
\draw[-{Latex}] (Mc1) -- (Mn1) node[above] {$\bN^i_{r,r+1}$};

\coordinate (dn1) at ($(Mc1)!-0.1cm!90:(Mr)-(Mc1)$);
\coordinate (Mrd1) at ($(Mr)+(dn1)$);
\coordinate (Mr1d1) at ($(Mr1)+(dn1)$);
\draw ($(Mrd1)!0.03!(Mr1d1)$) -- ($(Mrd1)!0.47!(Mr1d1)$)
($(Mrd1)!0.53!(Mr1d1)$) -- ($(Mrd1)!0.97!(Mr1d1)$);
\node[below] at ($(Mrd1)!0.25!(Mr1d1)$) {$\tPsi_{r,r+\hf}$};
\node[below] at ($(Mrd1)!0.75!(Mr1d1)$) {$\tPsi_{r+\hf,r+1}$};
\end{tikzpicture}
\caption{Notations around $\omega^i$}\label{fig_Ci_hoo}
\end{subfigure}
\begin{subfigure}{0.48\linewidth}
\centering
\begin{tikzpicture}[scale=0.8]
\coordinate (Mq) at (0,0);
\coordinate (Mk1) at (3,2.5);
\coordinate (Mk) at (5,-1);
\coordinate (Mqk) at (2.5,-0.5);
\coordinate (Mk2) at (-2,2);
\coordinate (Mk3) at (-2.5,-1.5);
\coordinate (Mk4) at (0.5,-3);

\fill (Mq) circle[radius=0.05];
\fill (Mk1) circle[radius=0.05];
\fill (Mk2) circle[radius=0.05];
\fill (Mk3) circle[radius=0.05];
\fill (Mk4) circle[radius=0.05];
\fill (Mk) circle[radius=0.05];
\draw [densely dashed](Mq)--(Mk2) (Mq)--(Mk3);
\draw (Mq) node[left=0.4cm] {$\bx^i_r =\bx^j_{k}$}
--(Mk) node[right] {$\bx_{r+1}^i = \bx^j_{k-1}$}
(Mq)--(Mk1) 
(Mq)--(Mk4);

\coordinate (Ok) at (4,1.9);
\coordinate (Ok1) at (-0.7,2.5);
\coordinate (Ok2) at (3.7,-1.2);
\node [below of=Ok]{$\omega^{i}$};
\node [below of=Ok2]{$\omega^{j}$};

\coordinate (n0) at ($(Mq)!1cm!-90:(Mk)-(Mq)$);
\coordinate (n1) at ($(Mq)!1cm!90:(Mk1)-(Mq)$);
\draw[-{Latex}] ($(Mq)!0.5!(Mk)$) -- ++(n0)
node[below] {$\bN_{e}$};

\coordinate (dn0) at ($0.06*(n0)$);
\coordinate (dn1) at ($0.02*(n0)$);
\draw ($(Mq)!0.03!(Mk) + (dn0)$)
--    ($(Mq)!0.48!(Mk) + (dn0)$);
\node[above] at ($(Mq)!0.25!(Mk) + (dn1)$)
{$\tPs_{e+}$};
\draw ($(Mq)!0.03!(Mk) - (dn0)$)
--    ($(Mq)!0.48!(Mk) - (dn0)$);
\node[below] at ($(Mq)!0.25!(Mk) - (dn1)$)
{$\tPs_{e-}$};

\draw[-{Latex}] (Mq) -- ($(Mq)+(-0.05,0.8)$)
node[above] {$\tbVi_{r}$};
\end{tikzpicture}
\caption{Notations around $\bx^i_r$}\label{fig_Mq_v}
\end{subfigure}
\caption{Notations used in nodal solver.}\label{fig_NodalSolver}
\end{figure}
}

According to both conditions, the solution procedure is as follows.
Firstly, for each half surface $e = \overline{\bx^i_r\bx^i_{r+\hf}} = \overline{\bx^j_{k-\hf}\bx^j_{k}}$, we could rewrite \eqref{maire_pres_hoo} in a simplified form as
\begin{eqnarray}\label{maire_pres_hoo_e}
\left\{
\begin{aligned}
& \tPs_{e+} =
P_{e+} - \alpha_{e+}\bra{\tbVi_r - \bV^{i}_{r}}\cdot \bN_{e} \\
& \tPs_{e-} =
P_{e-} + \alpha_{e-}\bra{\tbVi_r - \bV^{j}_{k}}\cdot \bN_{e}
\end{aligned}
\right. ,
\end{eqnarray}

where
\begin{gather*}
P_{e+} = \frac{1}{3}P^{i}_{r} +\frac{2}{3} P^{i}_{r+\hf} + \alpha^i_{r+\hf} w^i_{r+\hf}, \quad 
P_{e-} = \frac{1}{3}P^{j}_{k} +\frac{2}{3} P^{j}_{k-\hf} + \alpha^j_{k-\hf}w^j_{k-\hf}, \\
\alpha_{e+} = \frac{1}{3}\alpha^{i}_{r}+\frac{2}{3}\alpha^{i}_{r+\hf}, \quad
\alpha_{e-} = \frac{1}{3}\alpha^{j}_{k}+\frac{2}{3}\alpha^{j}_{k-\hf}.
\end{gather*}

Substitute Eqs.\eqref{maire_pres_hoo_e} into Eqs.\eqref{maire_balance_hoo}, we can get $\tbVi_r$ by solving the linear system 
\begin{eqnarray} \label{maire_vel_hoo}
\sum_{e\in \cE(\bx^i_r)} L_e
\bra{{\alpha}_{e+}+{\alpha}_{e-}}
\bra{\tbVi_r\cdot\bN_{e}-\tcVs_{e}}\bN_{e} = \boldsymbol{0},
\end{eqnarray}
where
\begin{gather} \label{normal_vel_e}
\tcVs_{e} = \frac{
\alpha_{e+} \bV^i_r\cdot \bN_e + \alpha_{e-} \bV^j_k\cdot \bN_e
+ P_{e+} - P_{e-}
}
{\alpha_{e+} + \alpha_{e-}}
\end{gather}
which could be regarded as a weighted average of normal velocity given by classical 1D acoustic Riemann solver for half surface $e$, more discussion can be found in Section \ref{chap:Lag:Nodal:Velocity}. Meanwhile, Eqs. \eqref{maire_vel_hoo} can be interpreted as a least square problem
\begin{eqnarray} \label{maire_vel_hoo_lsq}
\tbVi_{r} = \underset{\bV\in\mathbb{R}^2}{\arg \min}\sum_{e\in \cE(\bx^i_r)} L_e
\bra{{\alpha}_{e+}+{\alpha}_{e-}}
\bra{\bV\cdot\bN_{e}-\tcVs_{e}}^2.
\end{eqnarray}


\begin{theorem}
Omitting the boundary conditions, global conservation relations of momentum and energy could be achieved as
\begin{eqnarray*}
\bra{\sum_{i=1}^I m^i\bar{\bV}^i}_t &=& -\hf\sum_{i=1}^I\sum_{r=1}^4 L^i_{r,r+1}
\bra{\tPsi_{r,r+\hf} + \tPsi_{r+\hf,r+1}}
\bN^i_{r,r+1}\\
&=& -\hf \sum_{\bx\in \mathcal{X}}\sum_{e\in\cE(\bx)} L_e\bra{\tPs_{e+}-\tPs_{e-}}\bN_e = \boldsymbol{0},
\end{eqnarray*}
\begin{eqnarray*}
\bra{\sum_{i=1}^I m^i\bar{E}^i}_t &=& -\hf\sum_{i=1}^I\sum_{r=1}^4 L^i_{r,r+1}
\bra{\tPsi_{r,r+\hf}\tbVi_r + \tPsi_{r+\hf,r+1}\tbVi_{r+1}}\cdot
\bN^i_{r,r+1}\\
&=& -\hf \sum_{\bx\in \mathcal{X}} \tbVs(\bx)\cdot \brb{\sum_{e\in\cE(\bx)} L_e\bra{\tPs_{e+}-\tPs_{e-}}\bN_e} = 0,
\end{eqnarray*}
where $\mathcal{X}$ is the mesh vertex set and $\tbVs(\bx)$ is the Lagrangian velocity of mesh vertex $\bx\in \mathcal{X}$.
\end{theorem}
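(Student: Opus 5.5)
The plan is a regrouping of the cell-wise sums into half-surface and then vertex-wise sums, followed by the balance condition \eqref{maire_balance_hoo}.

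\textbf{Step 1 (cell sums to half surfaces).} The first equality in each identity is just \eqref{cell_evolve_vel} and \eqref{cell_evolve_E} summed over $i$, since $m^i$ is constant in time. For the next step, split each edge $\overline{\bx^i_r\bx^i_{r+1}}$ into its two half surfaces $\overline{\bx^i_r\bx^i_{r+\hf}}$ and $\overline{\bx^i_{r+\hf}\bx^i_{r+1}}$. Each has length $\hf L^i_{r,r+1}$ and normal $\bN^i_{r,r+1}$. The term $\hf L^i_{r,r+1}\tPsi_{r,r+\hf}\bN^i_{r,r+1}$ is then $L_e\tPs_{e}\bN_e$ for the half surface $e$ attached to the vertex $\bx^i_r$. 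Likewise $\hf L^i_{r,r+1}\tPsi_{r+\hf,r+1}\bN^i_{r,r+1}$ is the analogous term for the half surface attached to $\bx^i_{r+1}$. In the energy flux the pressure $\tPsi_{r,r+\hf}$ is paired with $\tbVi_r$ and $\tPsi_{r+\hf,r+1}$ with $\tbVi_{r+1}$, so every half-surface contribution carries the velocity of the vertex it touches. This is exactly why \eqref{cell_evolve_E} is written with that pairing.

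\textbf{Step 2 (regroup by vertex).} Every interior half surface $e$ lies in exactly two cells $\omega^i$ and $\omega^j$ and has exactly one mesh vertex $\bx$ as an endpoint. From $\omega^i$ it contributes $L_e\tPs_{e+}\bN_e$. From $\omega^j$ it contributes $L_e\tPs_{e-}(-\bN_e)$, because the outward normal of $\omega^j$ is $-\bN_e$. The total of the cell sums therefore equals $\sum_{e}L_e(\tPs_{e+}-\tPs_{e-})\bN_e$ taken over half surfaces (boundary ones omitted). Grouping by the vertex endpoint gives $\sum_{\bx\in\mathcal{X}}\sum_{e\in\cE(\bx)}$.

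The main bookkeeping obstacle sits here. The labels $e\pm$ and $\bN_e$ depend on which cell is called $\omega^i$ in the definition of $\cE(\bx)$. This is harmless: swapping the roles of $i$ and $j$ flips the sign of $\bN_e$ and exchanges $e+$ with $e-$, so the product $(\tPs_{e+}-\tPs_{e-})\bN_e$ is invariant. I would also make sure each half surface appears once in $\cE(\bx)$ rather than twice, which is what fixes the overall factor. For energy, all pressures on half surfaces at $\bx$ multiply the same nodal velocity $\tbVs(\bx)$, since $\tbVi_r=\tbVj_k$ at a shared vertex. That common velocity can be pulled out of the inner sum, giving the stated vertex form.

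\textbf{Step 3 (conclude).} The nodal velocity $\tbVs(\bx)$ is defined by solving \eqref{maire_vel_hoo}. That system is equivalent to the balance condition \eqref{maire_balance_hoo} after substituting the jump relations \eqref{maire_pres_hoo_e}. Hence each inner vector sum $\sum_{e\in\cE(\bx)}L_e(\tPs_{e+}-\tPs_{e-})\bN_e$ vanishes. This gives $\boldsymbol{0}$ for momentum and, after taking the dot product with $\tbVs(\bx)$, $0$ for energy.

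The only analytic input is solvability of \eqref{maire_vel_hoo}. Its matrix $\sum_e L_e(\alpha_{e+}+\alpha_{e-})\bN_e\otimes\bN_e$ is symmetric positive definite as long as the impedances $\alpha$ are positive and the normals around a vertex are not all collinear. I would note this but otherwise take it as given, as the paper does.
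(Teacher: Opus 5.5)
Your proposal is correct and follows the paper's route; the paper gives no proof beyond the displayed chain. That chain sums the cell equations, regroups by half surfaces and vertices using the invariance of $(\tPs_{e+}-\tPs_{e-})\bN_e$ and the single-valued $\tbVs$ at each vertex, and closes with the balance condition \eqref{maire_balance_hoo}, which is equivalent to the system \eqref{maire_vel_hoo} defining $\tbVs$. One bookkeeping correction: with $L_e=\hf L^i_{r,r+1}$ and each half surface listed once in $\cE(\bx)$, your regrouping gives the prefactor $-1$, not the stated $-\hf$. The $\hf$ matches listing each half surface once from each of its two cells, which by your invariance remark doubles the sum; so counting once does not reproduce the stated middle expression, but the conclusion that both rates vanish is unaffected.
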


It is worth mentioning that even if the physical field is smooth, the velocity $\tbVi_r$ given by our nodal solver does NOT satisfy $\tbVi_r = \bVi_r$. This property allows the accuracy of numerical flux to exceed the second-order limitation mentioned in {\bf Appendix \hyperlink{Apd.2rd}{A}}, i.e. 
\begin{theorem} \label{remark_hoo}
For smooth physical field $\U$ on computational domain $\omega$, consider a uniformly refined set of quadrilateral meshes $\{M_h,\; h \in \mathbb{R}  \}$.
The numerical flux given by (\ref{maire_pres_hoo}-\ref{maire_balance_hoo}) has 4th order accuracy $\mathcal{O} (h^4)$ at each cell surface, see {\bf Appendix \hyperlink{Apd.4th}{B} } for proof.
\end{theorem}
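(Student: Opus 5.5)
I would verify the claim by a local truncation-error analysis around a single edge $\overline{\bx^i_r\bx^i_{r+1}}=\omega^i\cap\omega^j$ of length $O(h)$. I assume the field $\U$ is smooth and that the reconstruction supplies $\U^i_r,\U^i_{r+\hf}$ with the accuracy LMCV provides, so that one-sided values on both sides coincide up to high order. The quantity to control is the flux error of the edge,
\begin{eqnarray*}
\hf L^i_{r,r+1}\bra{\tPsi_{r,r+\hf}+\tPsi_{r+\hf,r+1}} - \int_{\bx^i_r}^{\bx^i_{r+1}} P\,\dd l ,
\end{eqnarray*}
together with its energy analogue, normalized per unit length. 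I will also need the nodal velocity to be accurate enough that the mesh motion is compatible.

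\textbf{Step 1 (nodal velocity).} For a smooth field the left and right states agree at each vertex, so $\bV^i_r=\bV^j_k=\bV(\bx^i_r)$. Using \eqref{maire_pres_hoo_e}, I will expand $P_{e\pm}$ and compute
\begin{eqnarray*}
P_{e+}-P_{e-} = \bra{\alpha^i_{r+\hf}w^i_{r+\hf}+\alpha^j_{k-\hf}w^j_{k-\hf}} + \fh\bra{P^i_{r+\hf}-P^j_{k-\hf}} .
\end{eqnarray*}
The pressure difference here vanishes because both midpoints are the same physical point. The $w$ terms are the second differences $\fh(\bV_{mid}-\hf\bV_r-\hf\bV_{r+1})\cdot\bN$, which are $O(h^2)$. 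Since $\bN^j=-\bN^i$, the two $w$ contributions add rather than cancel. This gives $\tcVs_e=\bV(\bx)\cdot\bN_e+O(h^2)$, and the least-squares solution \eqref{maire_vel_hoo_lsq} then yields $\tbVi_r=\bV(\bx^i_r)+\mathbf{c}\,h^2+O(h^3)$, where $\mathbf{c}$ is an explicit curvature-type correction.

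\textbf{Step 2 (Simpson structure).} I substitute back into the jump condition. By construction \eqref{ANS_pres_assign}, the half-surface average $\hf\tP_{r,r+\hf}+\hf\tP_{r+\hf,r+1}$ equals $\ah\tP_r+\fh\tP_{r+\hf}+\ah\tP_{r+1}$. Each $\tP$ is the exact pressure plus an impedance times a velocity discrepancy, so Simpson's rule makes the pressure part accurate to $O(h^4)$. The remaining task is to show that the velocity-discrepancy terms combine, through the $w$ terms and the $h^2$ shift of $\tbV$, into exactly the correction needed for the motion of the straight edge. Concretely, the flux through the straight segment swept by the linearly interpolated endpoint velocities must match the flux through the true material curve, whose midpoint deviates by the velocity second difference.

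\textbf{Step 3 (global balance).} I will show that the leading $O(h^2)$ errors of $\tP_{e+}$ and $\tP_{e-}$ cancel across the balance condition \eqref{maire_balance_hoo}. I would Taylor expand all quantities about the vertex and check that the coefficients of $h^2$ and $h^3$ vanish by the symmetry choice \eqref{ANS_dp}; the odd-order terms cancel by reflection symmetry of the uniformly refined mesh.

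\textbf{Main obstacle.} I expect the hard part to be Step 2 with Step 3: proving that the $O(h^2)$ velocity correction does not spoil the accuracy and in fact compensates the area-change error of straight edges. This is exactly where Appendix A shows that the classical solver stalls at second order. I would first try the one-dimensional-like case of a uniform Cartesian mesh, where the least-squares problem decouples into normal components, and then treat general smooth quadrilateral meshes as perturbations.
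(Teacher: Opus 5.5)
\textbf{Verdict: the proposal has a genuine gap.} Steps 1--2 set the problem up correctly, but the identity that carries the whole result is never stated, and the mechanism Step 3 offers for it is wrong.

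\textbf{What Steps 1--2 correctly give.} Write $\delta\tbV^i_r:=\tbVi_r-\bVi_r$. You get $\delta\tbV^i_r=\cO(h^2)$. Comparing $\hf(\tPsi_{r,r+\hf}+\tPsi_{r+\hf,r+1})$ and $\hf(\tbVi_r+\tbVi_{r+1})\cdot\bN^i_{r,r+1}$ with Simpson's rule on the straight edge (the reference flux $\bF_S$ of Appendix B) shows that the volume and momentum components are $\cO(h^4)$ exactly when
\[
\bN^i_{r,r+1}\cdot\bra{\delta\tbV^i_r+\delta\tbV^i_{r+1}}=2w^i_{r+\hf}+\cO(h^4).
\]
No curved material line is needed for this comparison. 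The energy component, which you never treat, additionally needs $\delta\tbV^i_{r+1}-\delta\tbV^i_r=\cO(h^3)$ to split the products $\tP\,\tbV$.

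\textbf{Why Step 3 fails.} The errors do not cancel between $\tP_{e+}$ and $\tP_{e-}$ at a single vertex. Take a uniform Cartesian mesh with constant impedance, and let $\bN$ be the common normal of the two half edges on one mesh line. The nodal system decouples and gives $\bN\cdot\delta\tbV_{k,l}=\hf(w_{k-\hf,l}+w_{k+\hf,l})$. Hence $\tP_{e+}-\tP_{e-}=\alpha(w_{k+\hf,l}-w_{k-\hf,l})$, which is generically of exact order $h^3$. The $h^3$ terms disappear only when the two endpoints of the same edge are combined inside the one-sided averages above, not across the balance condition at one node.

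The vanishing of the $h^2$ coefficient also does not follow from the symmetric choice \eqref{ANS_dp}. The mesh $\bx_{k,l}=\bphi(kh,lh)$ has no geometric reflection symmetry. Parity in the logical variables can at best remove odd powers of $h$, never the $h^2$ term, and the $h^2$ term is where the whole content lies.

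There is also a sign error in Step 1. $P_{e+}-P_{e-}$ contains $\alpha^i_{r+\hf}w^i_{r+\hf}-\alpha^j_{k-\hf}w^j_{k-\hf}\approx2\alpha^i_{r+\hf}w^i_{r+\hf}$, so $\tcVs_e=\bVi_r\cdot\bN_e+w^i_{r+\hf}+\cO(h^3)$. Keep this term explicit rather than hiding it in an unspecified $\cO(h^2)$; it is exactly what the argument needs.

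\textbf{The missing idea.} You leave $\mathbf{c}$ unspecified, but its projections are precisely what must be computed. Solve the $2\times2$ system \eqref{maire_vel_hoo} at leading order using the quadrilateral structure.
\begin{itemize}
\item The four half edges at $\bx_{k,l}$ lie on two nearly straight mesh lines with averaged normals $\bN^0_{k,l}$ and $\bN^1_{k,l}$.
\item Within each symmetric pair, the relative $\cO(h)$ differences in $\alpha$, $L$ and the normals enter only at relative order $h^2$. So both the matrix and the right-hand side split along these two linearly independent directions.
\item This gives $\bN^0_{k,l}\cdot\delta\tbV_{k,l}=\hf(w_{k-\hf,l}+w_{k+\hf,l})+\cO(h^4)$ and $\bN^1_{k,l}\cdot\delta\tbV_{k,l}=\hf(w_{k,l-\hf}+w_{k,l+\hf})+\cO(h^4)$. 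In other words, the least-squares fit is exact at leading order.
\item Smoothness of these right-hand sides gives $\delta\tbV_{k+1,l}-\delta\tbV_{k,l}=\cO(h^3)$.
\item Add the $\bN^0$ relations at $\bx_{k,l}$ and $\bx_{k+1,l}$. Use $\bN^0_{k,l}+\bN^0_{k+1,l}=2\bN_{k+\hf,l}+\cO(h^2)$, the $\cO(h^3)$ bound, and the centered cancellation $w_{k-\hf,l}-2w_{k+\hf,l}+w_{k+\frac32,l}=\cO(h^4)$. This yields the edgewise identity; the other direction is the same.
\end{itemize}
With these three properties of $\delta\tbV$, the comparison with $\bF_S$ is direct algebra in all three components. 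The $\cO(h)$ variation of the impedances along the edge is absorbed by $\delta\tbV=\cO(h^2)$ together with the $\cO(h^3)$ bound.
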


\section{Features of our nodal solver} \label{chap:Lag:Nodal}
\subsection{Riemann wave structure}
In the local coordinate system, approximate Riemann wave structure at each surface can be described in detail. 
Take surface $e = \overline{\bx^i_r\bx^i_{r+1}} = \overline{\bx^j_{k-1}\bx^j_{k}} = \omega^i\cap \omega^j$ for example, the wave structure is described by $(\U_\zeta,\bF_\zeta^i,\bF_\zeta^j)(x,t)$ 
which are parameterized by $\zeta$ indicating the local 1D Riemann solution at the normal direction at point $\zeta\bx^i_r + (1-\zeta)\bx^i_{r+1}$. It should be noted that the discontinuity of pressure on both sides of $e$ makes it impossible to provide one reasonable numerical flux $\bF$ across both sizes, as is the case with 1D HLLC solvers in Section \ref{Chap:Lag:Revisit:1D}. Following the HLLC-2D solver \cite{Shen-HLLC2D}
 by Shen et.al, two different numerical fluxes $\bF_\zeta^i$ and $\bF_\zeta^j$ are introduced instead, which are associated with $\omega^i$ and $\omega^j$ respectively. That is 
\begin{eqnarray}\label{rup_all}
(\U_\zeta,\bF_\zeta^i,\bF_\zeta^j)(x,t) = \left\{
\begin{array}{lcl}
\bra{\U_L,\bF_L^i,\bF_L^j}(\zeta),          &      & {\mbox {if } \ x/t     \leq  S_L(\zeta)},\\
\bra{\tilde{\U}_L,\tbF_L^i,\tbF_L^j}(\zeta),     &      & {\mbox {if } \  S_L(\zeta) < x/t \leq  \ustar(\zeta)}, \\
\bra{\tilde{\U}_R,\tbF_R^i,\tbF_R^j}(\zeta),     &      & {\mbox {if } \ \ustar(\zeta) < x/t \leq S_R(\zeta)}, \\
\bra{\U_R,\bF_R^i,\bF_R^j}(\zeta),     &      & {\mbox {if } \ S_R(\zeta) <x/t},
\end{array} \right. 
\end{eqnarray}
 where the numerical fluxes $\bF_\zeta^i$ and $\bF_\zeta^j$ follow Rankine–Hugoniot condition: 
\begin{gather}
\left\{\begin{array}{l}
\tbF_L^i(\zeta) - \bF_L^i(\zeta) = S_L(\zeta)\bra{\tbU_L(\zeta) - \U_L(\zeta)} = \tbF_L^j(\zeta) - \bF_L^j(\zeta), \\ 
\tbF_R^i(\zeta) - \tbF_L^i(\zeta) =\;\ustar(\zeta)\; \bra{\tbU_R(\zeta) - \tbU_L(\zeta)} = \tbF_R^j(\zeta) - \tbF_L^j(\zeta), \\ 
\tbF_R^i(\zeta) - \bF_R^i(\zeta) = S_R(\zeta)\bra{\tbU_R(\zeta) - \U_R(\zeta)} = \tbF_R^j(\zeta) - \bF_R^j(\zeta).
\end{array}
\right.\label{rup_rh} 
\end{gather}


Our augmented nodal solver divides surface $e$ into four parts $[0,1]=I_1\sqcup I_2\sqcup I_3 \sqcup I_4$ for four Riemann solutions located at $\bx^i_r$, $\bx^i_{r+\hf}$, $\bx^i_{r+\hf}$ and $\bx^i_{r+1}$ respectively, where 
$$
I_1 = [0,\ah],\; I_2=[\ah,\hf],\; I_3=[\hf,\gh],\; I_4=[\gh,1].
$$
Let $\bN_e,\bT_e$ be the unit normal and tangent of edge $e$, we define $u = \bV\cdot \bN_e$, $v = \bV\cdot \bT_e$ for velocity field of cells, and $\tu = \tbV \cdot \bN_e$ for nodal  contact velocities. 

Firstly, the initial values are directly given by field $\U$ at $\bx^i_r, \bx^i_{r+\hf}, \bx^i_{r+1}$ as  
\begin{gather*}
	\U_L(\zeta) = \left\{
		\begin{array}{ll}
			(\rho^i_r, \rho^i_r u^i_r,\rho^i_r 
v^i_r,\rho^i_r E^i_r)^\top, & \zeta\in I_1, \\
			(\rho^i_{r+\hf}, \rho^i_{r+\hf} u^i_{r+\hf},\rho^i_{r+\hf} 
v^i_{r+\hf},\rho^i_{r+\hf} E^i_{r+\hf})^\top,  & \zeta\in I_2 \cup I_3, \\
(\rho^i_{r+1}, \rho^i_{r+1} u^i_{r+1},\rho^i_{r+1} 
v^i_{r+1},\rho^i_{r+1} E^i_{r+1})^\top, & \zeta\in I_4,
		\end{array}
	\right. \\
	\U_R(\zeta) = \left\{
		\begin{array}{ll}
			(\rho^j_k, \rho^j_k u^j_k,\rho^j_k v^j_k,\rho^j_k E^j_k)^\top, & \zeta\in I_1, \\
			(\rho^j_{k-\hf}, \rho^j_{k-\hf} u^j_{k-\hf},\rho^j_{k-\hf} v^j_{k-\hf},\rho^j_{k-\hf} E^j_{k-\hf})^\top,  & \zeta\in I_2 \cup I_3, \\
			(\rho^j_{k-1}, \rho^j_{k-1} u^j_{k-1},\rho^j_{k-1} v^j_{k-1},\rho^j_{k-1} E^j_{k-1})^\top, & \zeta\in I_4.
		\end{array}
	\right.
\end{gather*}
With the intial values, Eqs. \eqref{rup_rh} is closed with given wave speeds and fluxes as follows. 

The wave speeds at both size are approximated as 
\begin{gather*}
	S_L(\zeta) = \left\{
		\begin{array}{ll}
			u^i_r-\alpha^i_r/\rho^i_r, & \zeta \in I_1, \\
			u^i_{r+\hf}-\alpha^i_{r+\hf}/\rho^i_{r+\hf}, & \zeta\in I_2\cup I_3, \\
			u^i_{r+1}-\alpha^i_{r+1}/\rho^i_{r+1}, & \zeta\in I_4,
		\end{array}
	\right. \quad
	S_R(\zeta) = 
	\left\{
		\begin{array}{ll}
			u^j_k+\alpha^j_k/\rho^j_k, & \zeta \in I_1, \\
			u^j_{k-\hf}+\alpha^j_{k-\hf}/\rho^j_{k-\hf}, & \zeta\in I_2\cup I_3, \\
			u^j_{k-1}+\alpha^j_{k-1}/\rho^j_{k-1}, & \zeta\in I_4,
		\end{array}
	\right.
\end{gather*}
and the velocity of contact discontinuity is splited at the middle as 
\begin{gather*}
	\tu(\zeta) = \left\{
		\begin{array}{ll}
			\tu^i_r, & \zeta \in I_1\cup I_2, \\
			\tu^i_{r+1}, & \zeta\in I_3\cup I_4,
		\end{array}
		\right.
\end{gather*}

As for fluxes $\bF^i_L$ and $\bF^j_R$, it should be noticed that a pressure modification is required by revisiting \eqref{ANS_pres_split}. To be specific, 
\begin{gather*}
	\bF^i_L(\zeta) = \left\{
		\begin{array}{ll}
			\bF\bra{\U_L(\zeta)}, & \zeta \in I_1, \\
			\bF\bra{\U_L(\zeta)}-\delta \bF^i_{r+\hf}(\zeta), & \zeta \in I_2, \\
			\bF\bra{\U_L(\zeta)}+\delta \bF^i_{r+\hf}(\zeta), & \zeta \in I_3, \\
			\bF\bra{\U_L(\zeta)}, & \zeta\in I_4,
		\end{array}
		\right. \quad
	\bF^j_R(\zeta) = \left\{
		\begin{array}{ll}
			\bF\bra{\U_R(\zeta)}, & \zeta \in I_1, \\
			\bF\bra{\U_R(\zeta)}+\delta \bF^i_{k-\hf}(\zeta), & \zeta \in I_2, \\
			\bF\bra{\U_R(\zeta)}-\delta \bF^i_{k-\hf}(\zeta), & \zeta \in I_3, \\
			\bF\bra{\U_R(\zeta)}, & \zeta\in I_4,
		\end{array}
		\right.
\end{gather*}
where $\delta \bF^i_{r+\hf}$ and $\delta \bF^i_{k-\hf}$ are defined by \eqref{ANS_pres_split} as 
\begin{gather*}
	\delta \bF^i_{r+\hf}(\zeta) = \delta \tP^i_{r+\hf} (0,1,0,\tu(\zeta))^\top, \quad \delta \bF^j_{k-\hf}(\zeta) = \delta \tP^j_{k-\hf} (0,1,0,\tu(\zeta))^\top.
\end{gather*}

With given states above, the undetermined states in \eqref{rup_all} can be obtained by solving \eqref{rup_rh}. Especially, the numerical pressures $\tP^i_L,\tP^j_R$ on both size of $e$ are given by 
\begin{gather*}
\tP^i_L(\zeta) = \left\{
	\begin{array}{ll}
		P^i_r-\alpha^i_r(\tu^i_r - u^i_r), & \zeta\in I_1, \\
		P^i_{r+\hf}-\alpha^i_{r+\hf}(\tu^i_r - u^i_{r+\hf})-\delta\tP^i_{r+\hf}, & \zeta\in I_2, \\
		P^i_{r+\hf}-\alpha^i_{r+\hf}(\tu^i_{r+1} - u^i_{r+\hf})+\delta\tP^i_{r+\hf}, & \zeta\in I_3, \\
		P^i_{r+1}-\alpha^i_{r+1}(\tu^i_{r+1} - u^i_{r+1}), & \zeta\in I_4,
	\end{array}
	\right.
\quad
\tP^j_R(\zeta) = 
\left\{
	\begin{array}{ll}
		P^j_k + \alpha^j_k(\tu^j_k - u^j_k), & \zeta\in I_1, \\
		P^j_{k-\hf}-\alpha^j_{k-\hf}(\tu^j_k - u^j_{k-\hf})+\delta\tP^j_{k-\hf}, & \zeta\in I_2, \\
		P^j_{k-\hf}-\alpha^j_{k-\hf}(\tu^j_k - u^j_{k-\hf})-\delta\tP^j_{k-\hf}, & \zeta\in I_3, \\
		P^j_{k-1}-\alpha^j_{k-1}(\tu^j_{k-1} - u^j_{k-1}), & \zeta \in I_4,
	\end{array}
	\right.
\end{gather*}

In our scheme, the interval is divided based on Simpson's rule. In terms of formulation, if the partition is chosen as $I_1 = [0,\hf]$ and $I_4=[\hf,1]$, we will get the HLLC-2D solver, but it should be noticed that the nodal velocity fields $\tbV$ are acquired in different way. $\tbV$ for our nodal solver is solved following Section \ref{Chap:Lag:Revisit:ANS}, while the nodal velocity of HLLC-2D solver is acquired as Section \ref{Chap:Lag:Revisit:Maire}. 

Equivalence of Lagrangian flux (\ref{cell_evolve_vol}-\ref{cell_evolve_E}) and \eqref{rup_all} can be checked that 
\begin{gather*}
\hf \bra{\tbVi_r + \tbVi_{r+1}}\cdot\bN^i_{r,r+1} = \int_0^1 \tu(\zeta)\dd \zeta, \\
\hf\bra{\tP^i_{r,r+\hf}+\tP^i_{r+\hf,r+1}} = \int_0^1 \tP_L^i(\zeta) \dd \zeta, 
\\
\hf\bra{\tP^i_{r,r+\hf}\tbVi_r+\tP^i_{r+\hf,r+1}\tbVi_{r+1}}\cdot \bN^i_{r,r+1} = \int_0^1 \tP_L^i(\zeta) \tu(\zeta) \dd \zeta.
\end{gather*}

\subsection{Property of $\tbV$}\label{chap:Lag:Nodal:Velocity}
In the laboratory coordinate system, the cell-centered Lagrangian schemes developed based on works \cite{Maire-EUCCLHYD, Maire-HighOrder} by Maire and et al. solve the nodal velocity field $\tbV(\bx)$ in the form of
\begin{eqnarray}\label{global_vel_int}
\underset{\tbV(\bx)}{\arg \min} \sum_{f\in \mathcal{F}} \int_{f} (\alpha_{+} + \alpha_{-})\bra{\tbV\cdot\bN - \cVs }^2 \dd l,
\end{eqnarray}
but vary in the discretizations of $\tbV(\bx), \cVs(\bx)$ and the edge integrals. Here $\mathcal{F}$ is the set of cell surfaces, $\cVs$ is the normal velocity at each edge computed by 1D Riemann solver. 
For a more detailed and focused discussion on $f$, new notations are defined for this section as Fig. \ref{fig_notation_f}. 
In order to distinguish the physical fields on both sides of surface, subscript $+$ and $-$ are used here. Let $\alpha_{\pm} = \alpha_+ + \alpha_-$ be the sum of acoustic impedance of both sides for shorter formulation. For any physical field $\varphi$, let $\varphi^f_{\zeta+}$ and $\varphi^f_{\zeta-}$ be its value at point $\bx^f_\zeta = (1-s)\bx^f_0 + s \bx^f_1$ of both size. For example, if normal velocity $\cVs$ is acquired by HLLC solver, then
$$
\cVs^f_\zeta = \frac{\alpha^f_{\zeta+} \bV^f_{\zeta+}\cdot\bN^f + \alpha^f_{\zeta-}\bV^f_{\zeta-}\cdot\bN^f + P^f_{\zeta+}-P^f_{\zeta-}}{\alpha^f_{\zeta+} + \alpha^f_{\zeta-}}.
$$

\begin{figure}
    \centering
\begin{tikzpicture}[scale=1]
\coordinate (X0) at (0,0);
\coordinate (X1) at (5,0);
\coordinate (Xs) at (3.5,0);
\coordinate (X0L) at (-0.5,0.8);
\coordinate (X0R) at (-0.5,-0.8);
\coordinate (X1L) at (5.5,0.8);
\coordinate (X1R) at (5.5,-0.8);

\node at (1.5,0.5) {$f$};
\node at (4.3,0.8) {$+$};
\node at (4.3,-0.8) {$-$};

\fill (X0) circle[radius=0.05];
\fill (X1) circle[radius=0.05];
\fill (Xs) circle[radius=0.05];
\draw (X0L) -- (X0) -- (X0R);
\draw (X1L) -- (X1) -- (X1R);
\draw (X0) node[xshift=-15] {$\bx^f_0$} -- (X1) node[xshift=15] {$\bx^f_1$};
\node[below] at (Xs) {$\bx^f_\zeta$};

\coordinate (dn0) at (0,0.1);
\draw[{Latex[width=0.08cm]}-{Latex[width=0.08cm]}]
($(X0)!0.03!(X1) + (dn0)$) -- ($(X0)!0.97!(X1) + (dn0)$);
\node[above] at ($(X0)!0.5!(X1) + (dn0)$) {$L^f$};

\coordinate (n0) at ($(X0)!1cm!-90:(X1)-(X0)$);
\coordinate (n1) at ($(X0)!1cm!90:(X1)-(X0)$);
\draw[-{Latex}] ($(X0)!0.2!(X1)$) -- ++(n0)
node[below] {$\bN^{f}$};

\coordinate (Vr0) at (0.4,0.7);
\draw[-{Latex}] (X0) -- ++(Vr0) node[above] {$\tbV^f_0$};
\coordinate (Vr1) at (0.15,0.7);
\draw[-{Latex}] (X1) -- ++(Vr1) node[above] {$\tbV^f_1$};

\coordinate (Vrs) at (0,0.8);
\draw[-{Latex}] (Xs) -- ++(Vrs) node[above] {$\cVs^f_\zeta \bN^f$};




\end{tikzpicture}
\caption{Notations around surface $f$.}
\label{fig_notation_f}
\end{figure}

As for EUCCLHYD itself, it discretizes $\int_f$ by the trapezoidal rule and rewrites \eqref{global_vel_int} into
\begin{eqnarray}
\underset{\tbV(\bx)}{\arg \min} \sum_{f\in \mathcal{F}} \brb{
\hf L^f \alpha^f_{0\pm} \bra{\tbV^f_0\cdot \bN^f - \cVs^f_0}^2
+ \hf L^f \alpha^f_{1\pm} \bra{\tbV^f_1\cdot \bN^f - \cVs^f_1}^2
}, 
\end{eqnarray}

Consider high-order schemes using quadratic edges such as \cite{Vilar-LDG}, an extra control point is introduced at the middle of each edge, where the normal velocity is given by 1D HLLC Riemann solver. For these schemes, the velocity field can be formulated by applying Simpson's rule on \eqref{global_vel_int}, i.e.
\begin{eqnarray} \label{global_vel_quad}
\underset{\tbV(\bx)}{\arg \min} \sum_{f\in \mathcal{F}} \brb{
\ah L^f \alpha^f_{0\pm} \bra{\tbV^f_0\cdot \bN^f_0 - \cVs^f_0}^2
+ \fh L^f \alpha^f_{\hf\pm} \bra{\tbV^f_\hf \cdot \bN^f_\hf -\cVs^f_\hf}^2
+ \ah L^f \alpha^f_{1\pm} \bra{\tbV^f_1\cdot \bN^f_1 - \cVs^f_1}^2
}.
\end{eqnarray}

In this article, we aim to acquire velocity field which has the comparable accuracy as \eqref{global_vel_quad} without extra control points. Since linear velocity field is compatible with straight-edge mesh, a straightforward idea is replacing $\tbV^f_\hf$ with $\hf(\tbV^f_0+\tbV^f_1)$, namely
\begin{eqnarray} \label{global_vel_linear}
\underset{\tbV(\bx)}{\arg \min} \sum_{f\in \mathcal{F}} \brb{
\ah L^f \alpha^f_{0\pm} \bra{\tbV^f_0\cdot \bN^f - \cVs^f_0}^2
+ \fh L^f \alpha^f_{\hf\pm} \bra{\hf\tbV^f_0 \cdot \bN^f + \hf\tbV^f_1 \cdot \bN^f -\cVs^f_\hf}^2
+ \ah L^f \alpha^f_{1\pm} \bra{\tbV^f_1\cdot \bN^f - \cVs^f_1}^2
}.
\end{eqnarray}
However, the cross term of $\tbV^f_0$ and $\tbV^f_1$ complicates the least square problem \eqref{global_vel_linear} which requires direct or iterative technique for solving linear system of mesh size. The computational cost would be excessive comparing with existing schemes 
\cite{Vilar-LDG,Morgan-LDG,Cheng-3rd,Morgan-LDG-2,Liu-subcellLDG} 
and superfluous in the light of hindsight. To make \eqref{global_vel_linear} directly solvable, the middle term is expanded as
\begin{eqnarray*}
& & \fh L^f \alpha^f_{\hf\pm} \bra{\hf\tbV^f_0 \cdot \bN^f + \hf\tbV^f_1 \cdot \bN^f -\cVs^f_\hf}^2 \\
&=& \frac{1}{3}L^f \alpha^f_{\hf\pm} \bra{\tbV^f_0 \cdot \bN^f + \hf \delta \tbV^f \cdot \bN^f -\cVs^f_\hf}^2
+ \frac{1}{3}L^f \alpha^f_{\hf\pm} \bra{\tbV^f_1 \cdot \bN^f - \hf \delta \tbV^f \cdot \bN^f -\cVs^f_\hf}^2,
\end{eqnarray*}
where $\delta \tbV^f$ is defined by $\delta \tbV^f = \tbV^f_1 - \tbV^f_0$ and approximated by
\begin{eqnarray} \label{normal_dvel_approx}
\delta \tbV^f\cdot \bN^f \approx \frac{\alpha^f_{\hf+}\bra{\bV^f_{1+} - \bV^f_{0+}} + \alpha^f_{\hf-}\bra{\bV^f_{1-} - \bV^f_{0-}}}{\alpha^f_{\hf+} + \alpha^f_{\hf-}}\cdot\bN^f \eqqcolon \delta \cVs^f.
\end{eqnarray}
With this approximation, \eqref{global_vel_linear} is uncoupled to
\begin{eqnarray} \label{global_vel_linear_approx_origin}
\underset{\tbV(\bx)}{\arg \min} \sum_{f\in \mathcal{F}}
\left[ \ah L^f \alpha^f_{0\pm} \bra{\tbV^f_0\cdot \bN^f - \cVs^f_0}^2
+ \frac{1}{3}L^f \alpha^f_{\hf\pm} \bra{\tbV^f_0 \cdot \bN^f + \hf \delta \cVs^f - \cVs^f_\hf}^2 \right. \notag \\
+
\left. \ah L^f \alpha^f_{1\pm} \bra{\tbV^f_1\cdot \bN^f - \cVs^f_1}^2
+ \frac{1}{3}L^f \alpha^f_{\hf\pm} \bra{\tbV^f_1 \cdot \bN^f - \hf \delta \cVs^f - \cVs^f_\hf}^2 \right],
\end{eqnarray}
which is equivalent to
\begin{eqnarray} \label{global_vel_linear_approx}
\underset{\tbV(\bx)}{\arg \min} \sum_{f\in \mathcal{F}} \brb{
\bra{\ah \alpha^f_{0\pm} + \frac{1}{3}\alpha^f_{\hf\pm}} L^f
\bra{\tbV^f_0\cdot \bN^f - \tcVs^f_0}^2
+
\bra{\ah \alpha^f_{1\pm} + \frac{1}{3}\alpha^f_{\hf\pm}} L^f
\bra{\tbV^f_1\cdot \bN^f - \tcVs^f_1}^2
},
\end{eqnarray}
where
\begin{eqnarray*}
\tcVs^f_0 = \frac{\alpha^f_{0\pm}\cVs^f_0 + 2 \alpha^f_{\hf\pm}\bra{\cVs^f_\hf - \hf \delta \cVs^f}}{\alpha^f_{0\pm} + 2 \alpha^f_{\hf\pm}}, \qquad
\tcVs^f_1 = \frac{\alpha^f_{1\pm}\cVs^f_1 + 2 \alpha^f_{\hf\pm}\bra{\cVs^f_\hf + \hf \delta \cVs^f}}{\alpha^f_{1\pm} + 2 \alpha^f_{\hf\pm}}.
\end{eqnarray*}
By dividing the surface $f$ into two half surface $e_0 =\overline{\bx^f_0\bx^f_\hf}$ and $e_1 = \overline{\bx^f_\hf\bx^f_1}$, it can be directly checked that $\tcVs^f_0$ and $\tcVs^f_1$ are nothing but normal velocity $\tcVs_e$ defined in \eqref{normal_vel_e},
\begin{gather*}
\tcVs^f_0 = \tcVs_{e_0},\quad \tcVs^f_1 = \tcVs_{e_1},
\end{gather*}
which means \eqref{global_vel_linear_approx} is equivalent to \eqref{maire_vel_hoo_lsq}.

\section{Procedures} \label{chap:Lag:Proc}
This section introduces the additional procedures required to complete this scheme.

\subsection{Limiting procedure} \label{chap:Lag:LMCV:limiter}
In order to suppress the numerical oscillations, limiting procedure is defined as follows.

Firstly, a smoothness indicator is introduced for each cell $\omega^i$ to measure the discontinuities around the cell. To that end, we use the total boundary variations (TBV) across cell boundaries
\begin{eqnarray}
\mathrm{TBV}^i(\varphi) = \frac
{\displaystyle
\sum_{j\in \mathcal{N}^i}\brb{
\frac{1}{ |e_{ij}| }
\int_{e_{ij}}( \varphi^i - \varphi^j )\, \mathrm{d} s }^\beta
}
{\displaystyle
\sum_{j\in \mathcal{N}^i}\bra{\bar\varphi^i - \bar\varphi^j}^\beta
},
\end{eqnarray}
where $\mathcal{N}^i$ is the index set of neighbor cells of $\omega^i$, $e_{ij} = \omega^i\cap \omega^j$,  $\varphi$ is a scalar physical field used to detect discontinuities, and power $\beta$ is used to control the indicator sensitivty. In our test, we choose $\varphi = \rho$ and $\beta = 4$ for better numerical preformance. Similar to \cite{charest2015high,ivan2014high}, the smoothness indicator is defined as
\begin{eqnarray}\label{tvb_si}
S^i = \frac{1 - \mathrm{TBV}^i(\varphi)}{\max\brc{\mathrm{TBV}^i(\varphi), \epsilon}},
\end{eqnarray}
with $\epsilon = 10^{-16}$ to prevent zero-division. It is noted that $S^i$ increases to infinite for $\mathrm{TBV}^i(\varphi) \to 0$ in smooth region, and remains small for discontinuities occured. As a result, a problem-dependent cutoff number $S_c$ is used as threshold and the cell $\omega^i$ with $S^i < S_c$ is considered as non-smooth cell, where the linear reconstruction is applied.

Secondly, for each non-smooth cell $\omega^i$, a local linear reconstruction is applied as
\begin{eqnarray} \label{cell_tbv}
\U^i(\bx) \leftarrow \U_\avg^i + (\nabla_{\bx}\U)^i \cdot (\bx - \bx^i_b),
\end{eqnarray}
where $\bx^i_b$ is centroid of $\omega^i$, so the linear reconstruction is conserved. The gradient $(\nabla_{\bx}\U)^i$ is acquired by solving
\begin{eqnarray} \label{cell_tbv2}
\underset{(\nabla_{\bx}\U)^i}{\arg \min}\sum_{j\in\mathcal{N}^i} \left|\U_\avg^i  -\U_\avg^j + (\nabla_{\bx}\U)^i \cdot (\bx^j_{b} - \bx^i_b)\right|^2.
\end{eqnarray}

Thirdly, MLP \cite{park2010multi} reconstruction is used to suppress numerical oscillations causing by false local extremes. For each cell $\omega^i$ and conserved scalar $\varphi$, we update $\varphi^i(\bx)$ by
\begin{eqnarray} \label{cell_mlp}
\varphi^i(\bx) \leftarrow \bar\varphi^i + \lambda_{\mathrm{MLP}}\bra{\varphi^i(\bx) - \bar\varphi^i},
\end{eqnarray}
with $\lambda_{\mathrm{MLP}} = \min_{r=1}^4 \lambda_r$ and
\begin{eqnarray}
\lambda_r=
\begin{cases}
f_{\lim}\bra{
\frac{\varphi_{\max,r} - \bar\varphi^i}
{\varphi^i_r - \bar\varphi^i}},
& \varphi^i_r - \bar\varphi^i > 0,
\\
f_{\lim}\bra{
\frac{\varphi_{\min,r} - \bar\varphi^i}
{\varphi^i_r - \bar\varphi^i}},
& \varphi^i_r - \bar\varphi^i < 0,
\\
1, & \varphi^i_r - \bar\varphi^i = 0,
\end{cases}
\end{eqnarray}
where $\varphi_{\max, r}$ and $\varphi_{\min,r}$ are the maximum and minimum of $\bar\varphi^j$ among the adacent cells of $\bx^i_r$. In this work, we use the Michalak-Gooch limiter \cite{michalak2009accuracy} as
\begin{eqnarray}
f_{\lim}(x) = \begin{cases}
-\frac{4}{27} x^3 + x, & x < \frac{3}{2},   \\
1,                     & x \ge \frac{3}{2}.
\end{cases}
\end{eqnarray}
It should be noticed that the MLP reconstruction is related to the selection of conserved scalars $\varphi$.
In order to maintain symmetry, we choose $\varphi\in \{\rho, \rho \bV \cdot \mathbf{e}_n, \rho \bV \cdot \mathbf{e}_t, \rho E\}$ for $\omega^i$ with $\{\mathbf{e}_n, \mathbf{e}_t\}$ as orthonormal basis, where $\mathbf{e}_n \mathrel{/\negmedspace/} \bar{\bV}^i$.

\subsection{Time discretization} \label{chap:Lag:LMCV:time}
The third-order SSP RK (Strong Stability Perserving Runge Kutta) method \cite{Shu-TVD} 
is used for time discretization in order to achieve consistent 3rd order accuracy. Since the VIA and PV use different control equations, their time integrations are rewriten separately in order to avoid ambiguity. We denote (\ref{cell_evolve_vol}-\ref{cell_evolve_E}) as $(\U_\via)_t = L_{\via}(\U,\bx)$ and \eqref{node_evolve_numerical} as $(\U_{\pt})_t= L_{\pt}(\U,\bx)$, where $\bx$ represents the quadrilateral mesh, $\U_{\via}$ and $\U_{\pt}$ are two column vectors of $\{\bU_\avg^i,\; i=1,2,\cdots,I.\}$ and $\{\U^i_r,\; i=1,2,\cdots,I,\, r=1,2,3,4. \}$, respectively. With physical field $\U$ and mesh $\bx$, let $\tbVs(\U,\bx)$ be the nodal velocity field given by \eqref{node_evolve_numerical}, we can use the SSP RK method at $n$-th time step as
\begin{align*}
\mbox{Stage 1)} \quad                       & \bx^{(1)} = \bx^n
+ \Delta t^n \tbVs(\U^n,\bx^n),             &                                                                                                                               \\
                        & \U_{\via}^{(1)} = \U_{\via}^n
+ \Delta t^n L_{\via}(\U^n,\bx^n),          &                                                                                                                               \\
                        & \U_{\pt}^{(1)} = \U_{\pt}^n
+ \Delta t^n L_{\pt}(\U^n,\bx^n),           &                                                                                                                               \\
\mbox{Stage 2)} \quad                       & \bx^{(2)} = \frac {3}{4} \bx^n
+ \frac{1}{4}\brb{\bx^{(1)}
+ \Delta t^n \tbVs(\U^{(1)},\bx^{(1)})},    &                                                                                                                               \\
                        & \U_{\via}^{(2)} = \frac{3}{4} \U_{\via}^n
+ \frac{1}{4} \brb{ \U_{\via}^{(1)}
+ \Delta t^n L_{\via}(\U^{(1)},\bx^{(1)})}, &                                                                                                                               \\
\quad                                       & \U_{\pt}^{(2)} = \frac {3}{4} \U_{\pt}^n
+ \frac{1}{4}\brb{\U_{\pt}^{(1)}
+ \Delta t^n L_{\pt}(\U^{(1)},\bx^{(1)})},  &                                                                                                                               \\
\mbox{Stage 3)}\quad                        & \bx^{n+1} = \frac{1}{3} \bx^{n} + \frac{2}{3} \brb{\bx^{(2)} + \Delta t^n \tbVs(\U^{(2)},\bx^{(2)})},                       & \\
\quad                                       & \U_{\via}^{n+1} = \frac{1}{3}  \U_{\via}^{n} + \frac{2}{3} \brb{\U_{\via}^{(2)} + \Delta t^n L_{\via}(\U^{(2)},\bx^{(2)})}, & \\
\quad                                       & \U_{\pt}^{n+1} = \frac{1}{3} \U_{\pt}^{n} + \frac{2}{3} \brb{\U_{\pt}^{(2)} + \Delta t^n L_{\pt}(\U^{(2)},\bx^{(2)})},      &
\end{align*}
where time step $\Delta t^n $ satisfies CFL condition
\begin{eqnarray} \label{pp_cfl}
\Delta t^n \le \sigma_e
\dfrac{m^i}{\displaystyle \sum_{r=1}^4 L^i_{r,r+1} \bra{\ah\alpha^i_r + \frac{2}{3}\alpha^i_{r+\hf} + \ah\alpha^i_{r+1}}},\; \forall i = 1,2,3,\cdots,I,
\end{eqnarray}
with CFL conffcient $\sigma_e=0.2$.

\subsection{Summary of solution procedure}
The whole solution procedure at $n$-th time step could be summarized as follows:
\begin{enumerate}
\item According to Section \ref{chap:Lag:LMCV:space} and \ref{chap:Lag:LMCV:limiter}, physical field $\U^n(\bx)$ is reconstructed and limited with VIA and PV values $\U^n = (\U^n_{\avg},\U^n_{\pt})$ and mesh $\bx^n$ at time $t^n$.
\item With physical field $\U^n(\bx)$ on mesh $\bx^n$, time step $\Delta t^n$ is determined by \eqref{pp_cfl}.
\item Let $k=0, \U^{(0)} = \U^n, \bx^{(0)} = \bx^n$, the Runge-Kutta stages are performed successively as follows:
\begin{enumerate}
\item Physical field $\U^{(k)}(\bx)$ is reconstructed and limited based on $\U^{(k)}, \bx^{(k)}$;
\item $L_{\avg}(\U^{(k)},\bx^{(k)}),L_{\pt}(\U^{(k)},\bx^{(k)})$ and $\tbVs(\U^{(k)},\bx^{(k)})$ are computed based on Section \ref{chap:Lag:LMCV:VIAflux} and \ref{chap:Lag:LMCV:PVflux}.
\item $\U^{(k+1)}$ and $\bx^{(k+1)}$ are updated by third-order SSP RK method as Section \ref{chap:Lag:LMCV:time} shows, let $ k = k+1$.
\end{enumerate}
\item The result of SSP RK method is denoted as $\U^{n+1} = \U^{(k)}, \bx^{n+1} = \bx^{(k)}$, and the time is updated by $t^{n+1} = t^n + \Delta t^n$.
\end{enumerate}

\section{Numerical tests} \label{chap:Lag:Test}
In this section, widely used benchmark tests are performed to verify the accuracy and robustness of our method. For purpose of numerical accuracy measurement, 
average $L_2$ error is defined for any physical field $\varphi$ as 
\begin{eqnarray}
\sqrt{\frac{ \sum_{i=1}^I |\omega^i| (\varphi_\avg^i -\varphi^i_{\mathrm{ref}} )^2}{\sum_{i=1}^I |\omega^i|}},\quad 
\end{eqnarray}
where the reference value $\varphi^i_{\mathrm{ref}} = \int_{\omega^i} \varphi_{\mathrm{ref}}\,\mathrm{d}\bx = \int_{\Omega} \varphi_{\mathrm{ref}}\, \mathrm{Det}\bra{\frac{\partial \bx^i}{\partial \bxi}} \mathrm{d}\bxi$ are computed by applying 3-point Guass-Legendre internal on each dimension of $\Omega$ for sufficient accuracy. Unless otherwise specified, tests are based on ideal gas with specific heat ratio $\gamma$.

\subsection{Isentropic vortex problem}
The isentropic vortex \cite{shuEssentially1997} is a steady smooth flow with strong nonlinearity. The initial distribution is defined by
\begin{eqnarray*}
\rho_0 = T^{\frac{1}{\gamma-1}},\quad \bV_0 = \frac{\epsilon}{2\pi}\exp\bra{\frac{1-r^2}{2}}(-y,x)^{\top},\quad P_0=T^{\frac{\gamma}{\gamma-1}},
\end{eqnarray*}
where $r = \sqrt{x^2+y^2}$, the vortex parameter $\epsilon=5$ and $\gamma = 1.4$. The computational domain is taken as $[-10, 10]^2$ with natural boundaries.

The local pressure fields at $t = 2$ and $ t=3$ using LMCV are shown in Fig. \ref{Fig.T1} with resolution of $100\times 100$. The mesh distortion accumulates over time while the pressure field is preserved well via our high order scheme.
\begin{figure}[!htbp]
	\centering  
	\begin{subfigure}{0.48\linewidth}
		\centering
		\includegraphics[width=\textwidth]{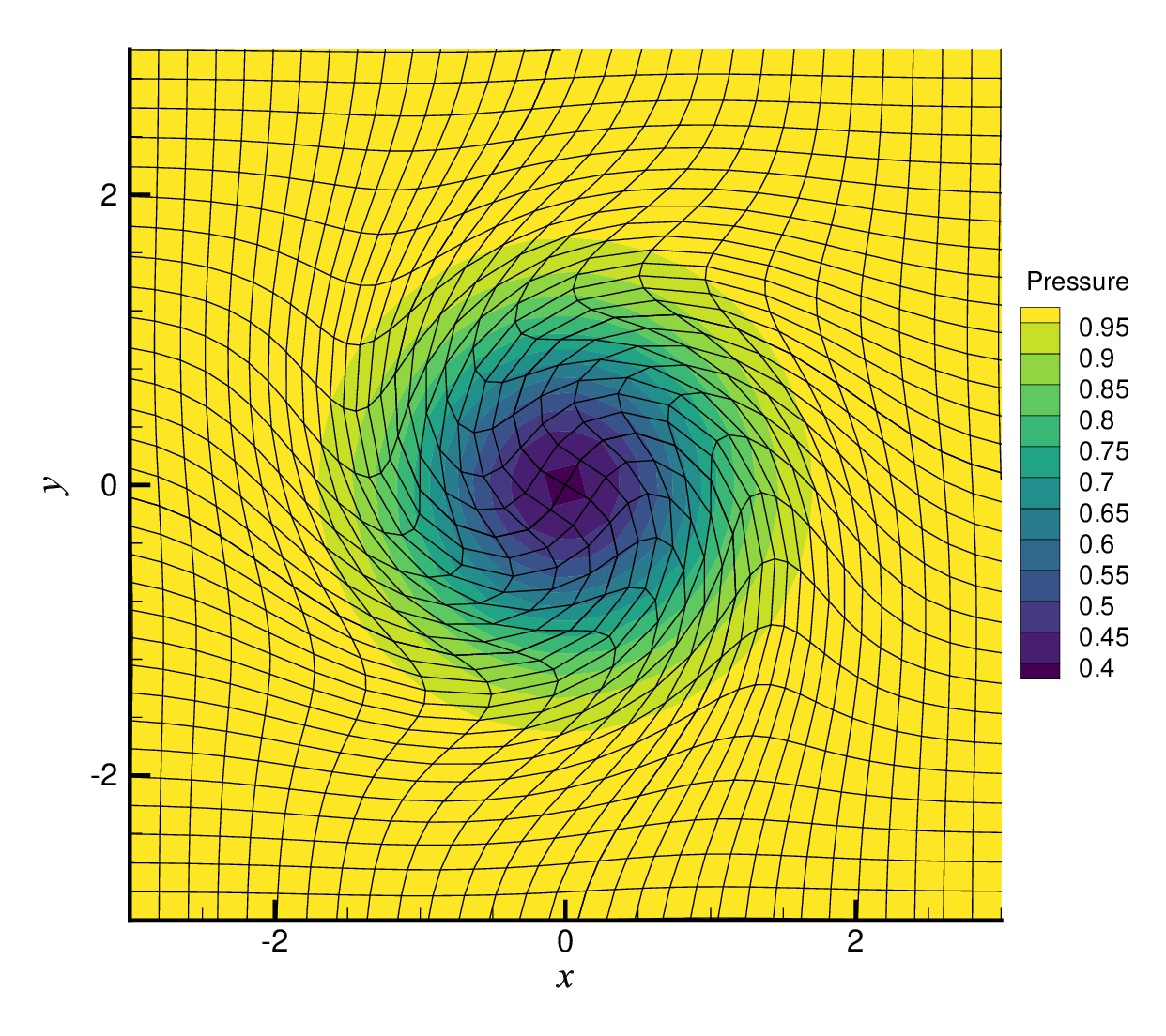}
	\end{subfigure}
	\begin{subfigure}{0.48\linewidth}
		\centering
		\includegraphics[width=\textwidth]{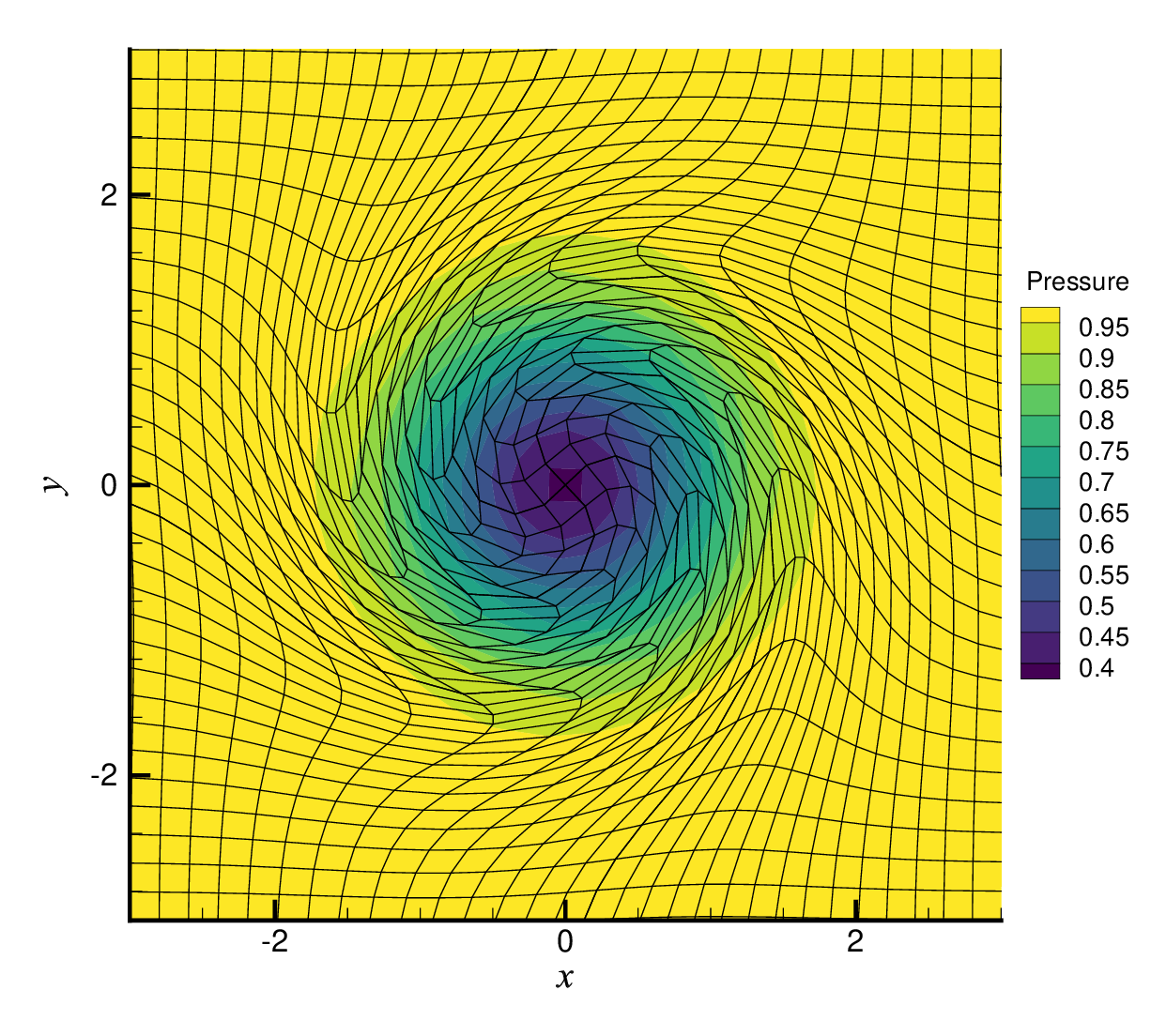}
	\end{subfigure}
	\caption{The close-up view of pressure contours with mesh using LMCV at $t=2$ (left) and $t=3$ (right) are shown for the isentropic vortex. The initial mesh are $100\times 100$ uniform mesh.}
	\label{Fig.T1}
\end{figure}

The order of accuracy of LMCV is assessed and shown in Table \ref{tab:acc}, where these tests are performed on a series of uniform initial meshes. It is worth noting that the convergence order increases as the mesh becomes finer, because larger cells accumulate more distortions over time, which magnify the error.
\begin{table}[H]
\centering
\caption{Numerical error and convergence order for the isentropic vortex at time $t=1$.}
\label{tab:acc}
\begin{small}
\begin{tabular}{lllllllll}
\hline
                & \multicolumn{2}{l}{Density} &  & \multicolumn{2}{l}{Momentum} &  & \multicolumn{2}{l}{Internal energy} \\ \cline{2-3} \cline{5-6} \cline{8-9} 
Mesh            & $L_2$ Error     & Order     &  & $L_2$ Error      & Order     &  & $L_2$ Error         & Order         \\ \hline
$50\times 50$   & 1.17E-03        & -         &  & 2.00E-03         & -         &  & 4.18E-03            & -             \\
$100\times 100$ & 1.64E-04        & 2.83      &  & 2.86E-04         & 2.81      &  & 5.79E-04            & 2.85          \\
$200\times 200$ & 1.99E-05        & 3.05      &  & 3.65E-05         & 2.97      &  & 6.91E-05            & 3.07          \\
$400\times 400$ & 2.28E-06        & 3.12      &  & 4.61E-06         & 2.98      &  & 7.62E-06            & 3.18          \\ \hline
\end{tabular}

\end{small}
\end{table}

\subsection{Taylor-Green vortex problem}
The Taylor-Green vortex \cite{vilar2012cell,burton2015reduction,dobrev2012high,morgan2015godunov}
is a smooth flow with initial field
\begin{eqnarray*}
\rho_0 = 1,\quad \bV_0 = (\sin(\pi x)\cos(\pi y),-\cos(\pi x)\sin(\pi y))^{\top},\quad P_0=\frac{1}{4}\brb{\cos(2\pi x)+\cos(2\pi y)}+1,
\end{eqnarray*}
which is steadied by a source term
\begin{eqnarray}
S = \bra{0,0,0,\frac{\pi}{4(\gamma - 1)}\brb{\cos(3\pi x)\cos(\pi y) - \cos(3\pi y)\cos(\pi x)}}^{\top}.
\end{eqnarray}
where $\gamma = 1.4$. The computational domain is $[0,1]^2$ with rigid boundaries.

Firstly, this test is carried out with $25\times 25$ uniform initial mesh.
As shown in Fig. \ref{Fig.TG}, cells close to boundaries are stretched over time. Nevertheless, maximum point of the pressure field are still located at corners consistent with exact solution at time $t=0.5$, but shift at $t=0.75$ as stretching becomes more severe. Similar result also appeared using other Lagrangian high order schemes \cite{Liu-subcellLDG}.
\begin{figure}[!htbp]
	\centering  
	\begin{subfigure}{0.48\linewidth}
		\centering
		\includegraphics[width=\textwidth]{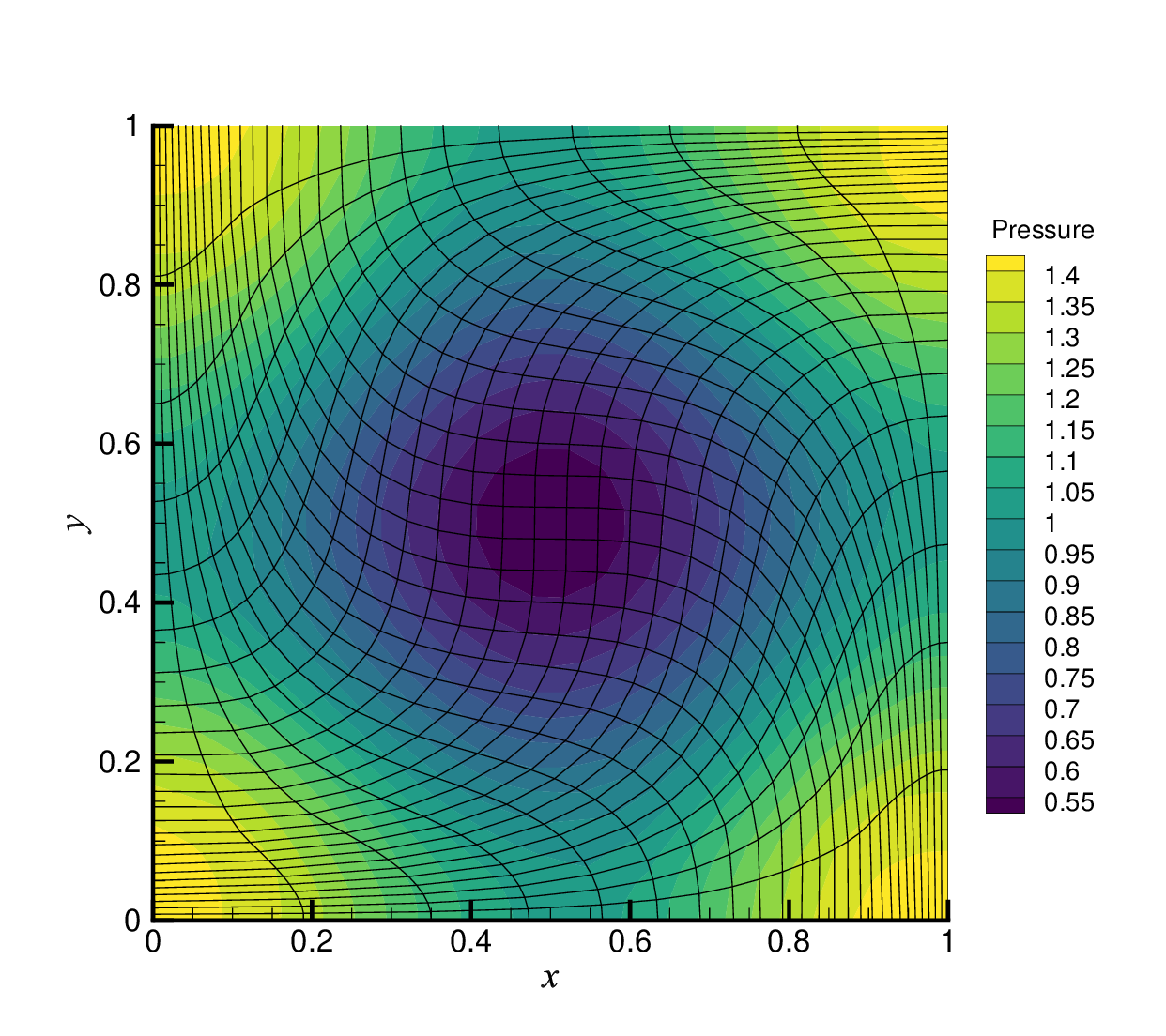}
	\end{subfigure}
	\begin{subfigure}{0.48\linewidth}
		\centering
		\includegraphics[width=\textwidth]{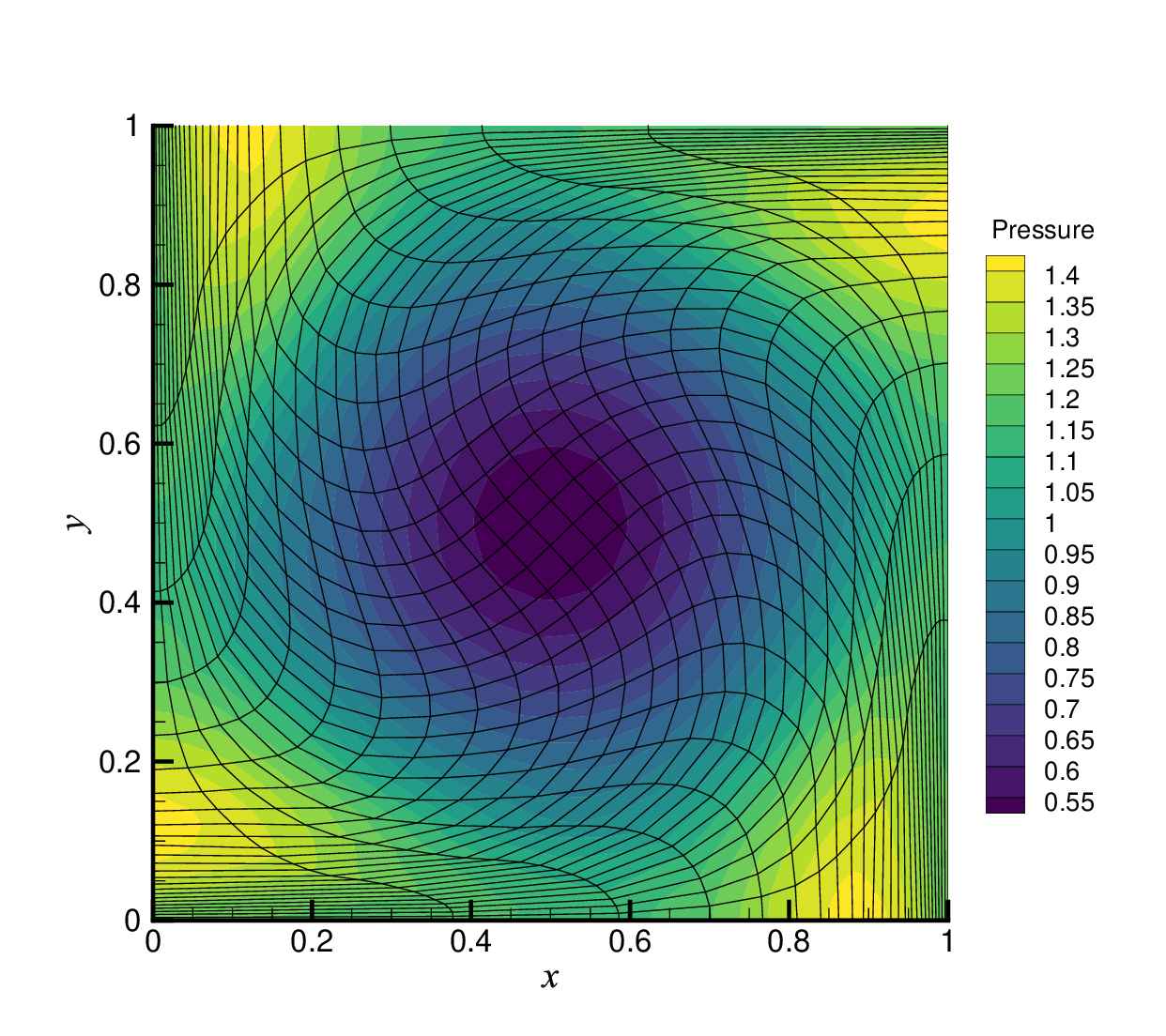}
	\end{subfigure}
	\caption{The global view of pressure contours with mesh using LMCV at $t=0.5$ (left) and $t=0.75$ (right) are shown for the Taylor-Green vortex. The initial mesh are $25\times 25$ uniform mesh. }
	\label{Fig.TG}
\end{figure}

In order to demonstrate the accuracy of LMCV on stretched meshes, two series of initial meshes $M_0(N)$ and $M_{\hf}(N)$ are introduced, where $M_0(N)$ is uniform mesh with size $N\times N$, and stretched mesh $M_{\hf}(N)$ is obtained by appling velocity field of Taylor-Green vortex to $M_0(N)$ for 0.5 time unit. The results of accuracy tests on both series are shown in Table \ref{tab:acc_tg1} and \ref{tab:acc_tg2}, respectively. Comparing the two tables, it can be seen that the mesh distortion amlifies numerical error, but our method yield the expected third-order accuracy for all variables, even on highly distorted meshes.

\begin{table}[H]
\centering
\caption{Numerical error and convergence order for Taylor-Green vortex at time $t=0.1$ with $M_0$ as initial meshes.}
\label{tab:acc_tg1}
\begin{small}
\begin{tabular}{lllllllll}
\hline
                & \multicolumn{2}{l}{Density} &  & \multicolumn{2}{l}{Momentum} &  & \multicolumn{2}{l}{Internal energy} \\ \cline{2-3} \cline{5-6} \cline{8-9} 
Mesh            & $L_2$ Error     & Order     &  & $L_2$ Error      & Order     &  & $L_2$ Error         & Order         \\ \hline
$25\times 25$   & 4.45E-05        & -         &  & 7.01E-05         & -         &  & 1.34E-04            & -             \\
$50\times 50$   & 5.05E-06        & 3.14      &  & 8.39E-06         & 3.06      &  & 1.51E-05            & 3.15          \\
$100\times 100$ & 5.41E-07        & 3.22      &  & 9.89E-07         & 3.08      &  & 1.82E-06            & 3.05          \\
$200\times 200$ & 6.82E-08        & 2.99      &  & 1.23E-07         & 3.01      &  & 2.30E-07            & 2.98          \\ \hline
\end{tabular}

\end{small}
\end{table}
\begin{table}[H]
\centering
\caption{Numerical error and convergence order for Taylor-Green vortex at time $t=0.1$ with $M_{\hf}$ as initial meshes.}
\label{tab:acc_tg2}
\begin{small}
\begin{tabular}{lllllllll}
\hline
                & \multicolumn{2}{l}{Density} &  & \multicolumn{2}{l}{Momentum} &  & \multicolumn{2}{l}{Internal energy} \\ \cline{2-3} \cline{5-6} \cline{8-9} 
Mesh            & $L_2$ Error     & Order     &  & $L_2$ Error      & Order     &  & $L_2$ Error         & Order         \\ \hline
$25\times 25$   & 6.63E-04        & -         &  & 1.95E-03         & -         &  & 2.52E-03            & -             \\
$50\times 50$   & 1.00E-04        & 2.73      &  & 2.52E-04         & 2.95      &  & 3.98E-04            & 2.66          \\
$100\times 100$ & 1.27E-05        & 2.98      &  & 3.15E-05         & 3.00      &  & 5.10E-05            & 2.97          \\
$200\times 200$ & 1.63E-06        & 2.97      &  & 3.91E-06         & 3.01      &  & 6.56E-06            & 2.96          \\ \hline
\end{tabular}

\end{small}
\end{table}

\subsection{Gresho vortex problem}
The Gresho vortex \cite{Vilar-LDG} 
is a steady smooth vortical flow similar to the isentropic vortex. The initial field is
\begin{eqnarray*}
\rho_0 = 1, \quad
\bV_0 = \frac{v_0}{r} g_v\bra{\frac{r}{r_v}} (-y,x)^\top, \quad
P_0 = P_0 + v_0^2 \, h_v\bra{\frac{r}{r_v}},
\end{eqnarray*}
where $\gamma = 1.4$, $r = \sqrt{x^2+y^2}$, $v_0 = 1$, $P_0 = 5$ and the radius of the vortex $r_v=0.4$. The shape functions $g_v$ and $h_v$ are defined as
\begin{eqnarray*}
g_v(s) = \begin{cases}
2^{2n}s^n(1-s)^n, & s \le 1, \\
0,                & s > 1,
\end{cases} \quad
h_v(s) = \int_{0}^{s} \frac{g_v(\xi)^2}{\xi} \mathrm{d} \xi,
\end{eqnarray*}
with $n=6$. Unlike the isentropic vortex, the computational domain for this test case is a disk with $r\le 0.52$ with Dirichlet boundaries. In order to discretize the domain with quadrilateral cells, uniform polar mesh with $N_r \times N_\theta$ size is introduced here, divides the disk into $N_r$ parts equally along the radius and $N_{\theta}$ parts equally along the angular direction. Small arcs in each cell are approximated by line segments with the same endpoints, which makes accuracy test more challenging. In addition, a small hole with radius $10^{-6}$ is placed at the origin to avoid degenerate quadrilateral cells.

First, this problem is solved by LMCV on mesh of $36 \times 36$ size, and the pressure contours at time $t=0.2$ and $t=0.5$ are shown in Fig. \ref{Fig.Gresho}. Our method gives reliable results even when cells are so distorted that they tend to self-intersect at $t=0.5$. Soon after this, self-intersecting cells are inevitably appear, making the solving procedure unable to continue.
\begin{figure}[!htbp]
	\centering  
	\begin{subfigure}{0.43\linewidth}
		\centering
		\includegraphics[width=\textwidth]{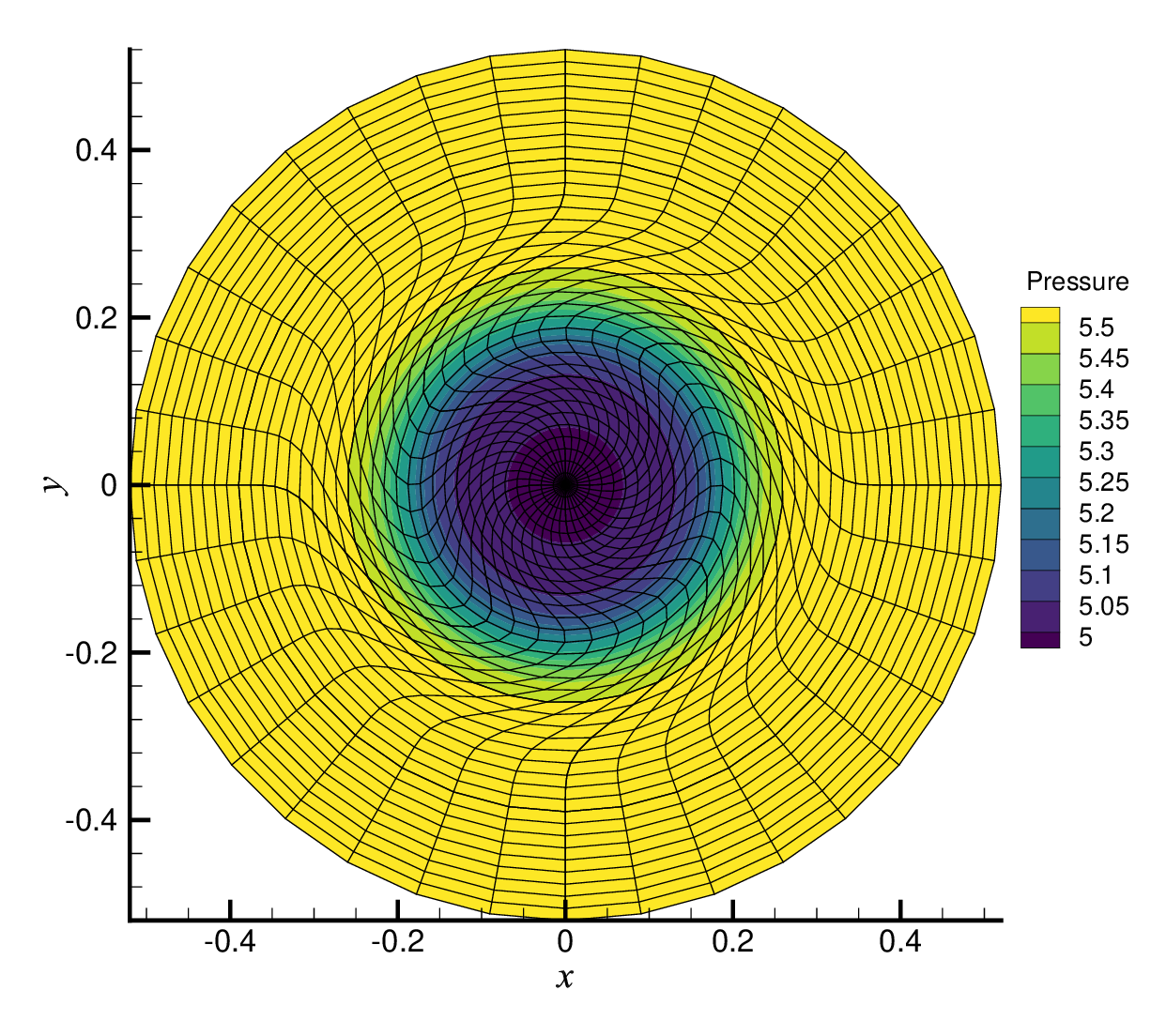}
	\end{subfigure}
	\begin{subfigure}{0.43\linewidth}
		\centering
		\includegraphics[width=\textwidth]{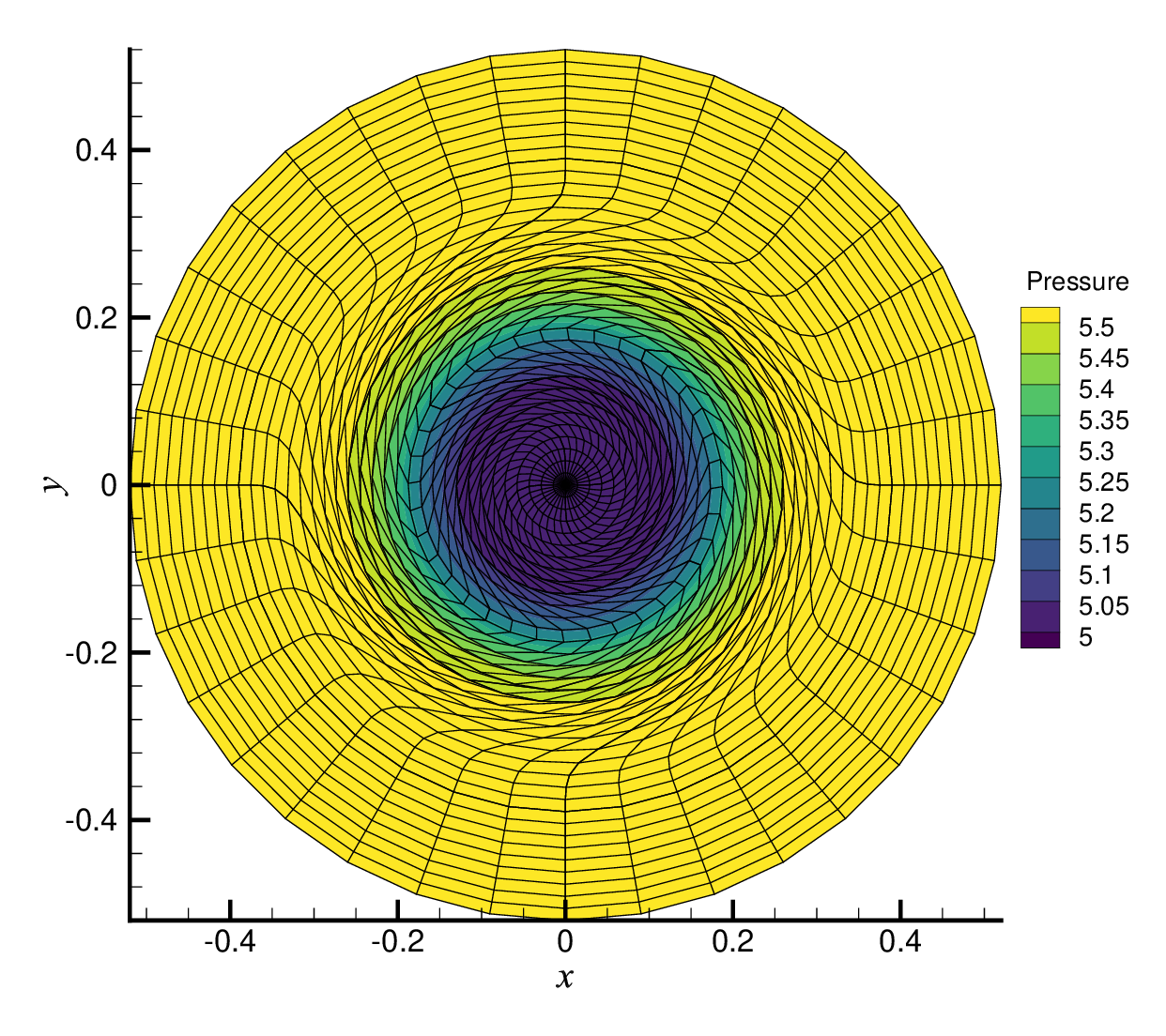}
	\end{subfigure} \\
	\begin{subfigure}{0.43\linewidth}
		\centering
		\includegraphics[width=\textwidth]{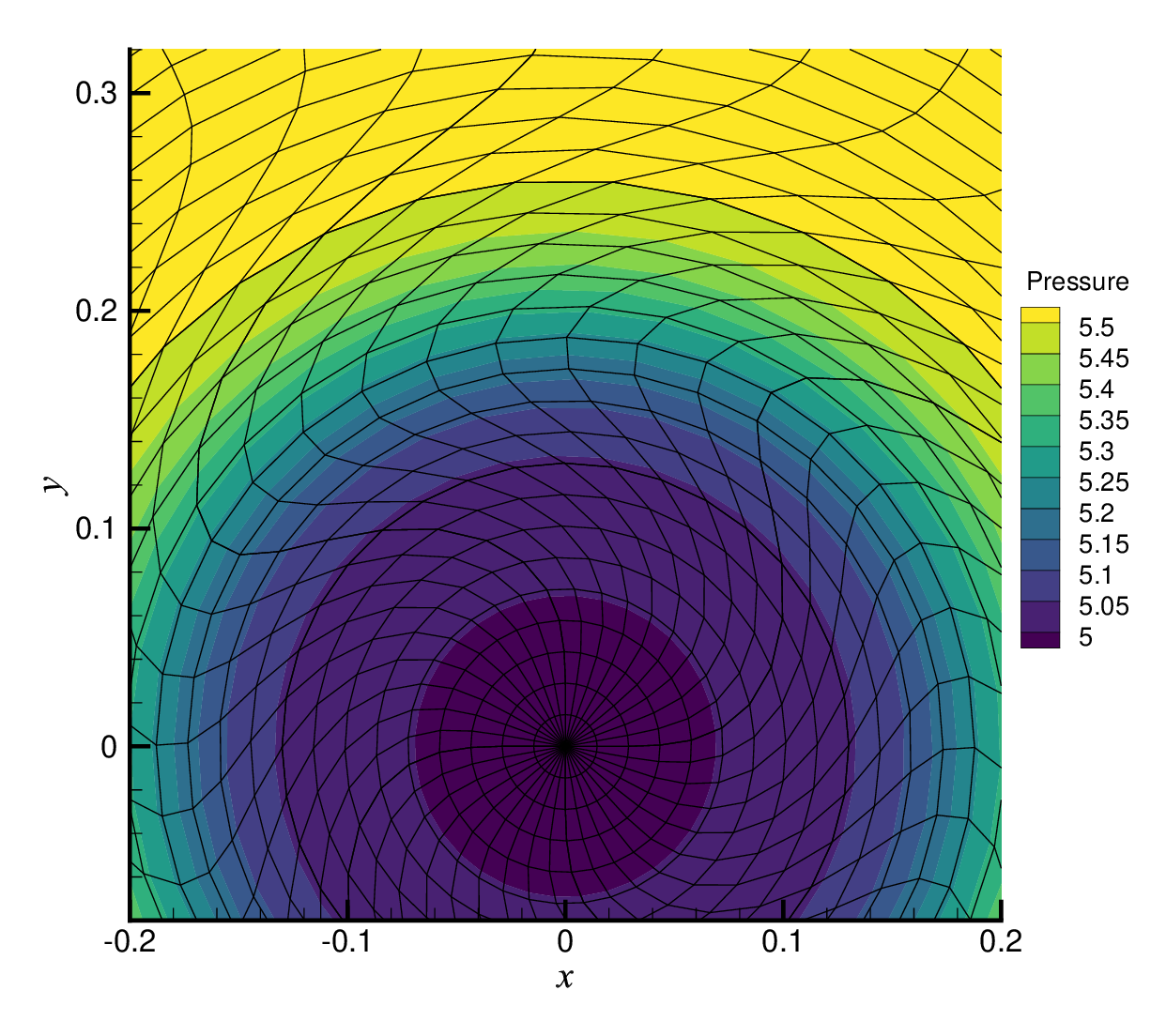}
	\end{subfigure}
	\begin{subfigure}{0.43\linewidth}
		\centering
		\includegraphics[width=\textwidth]{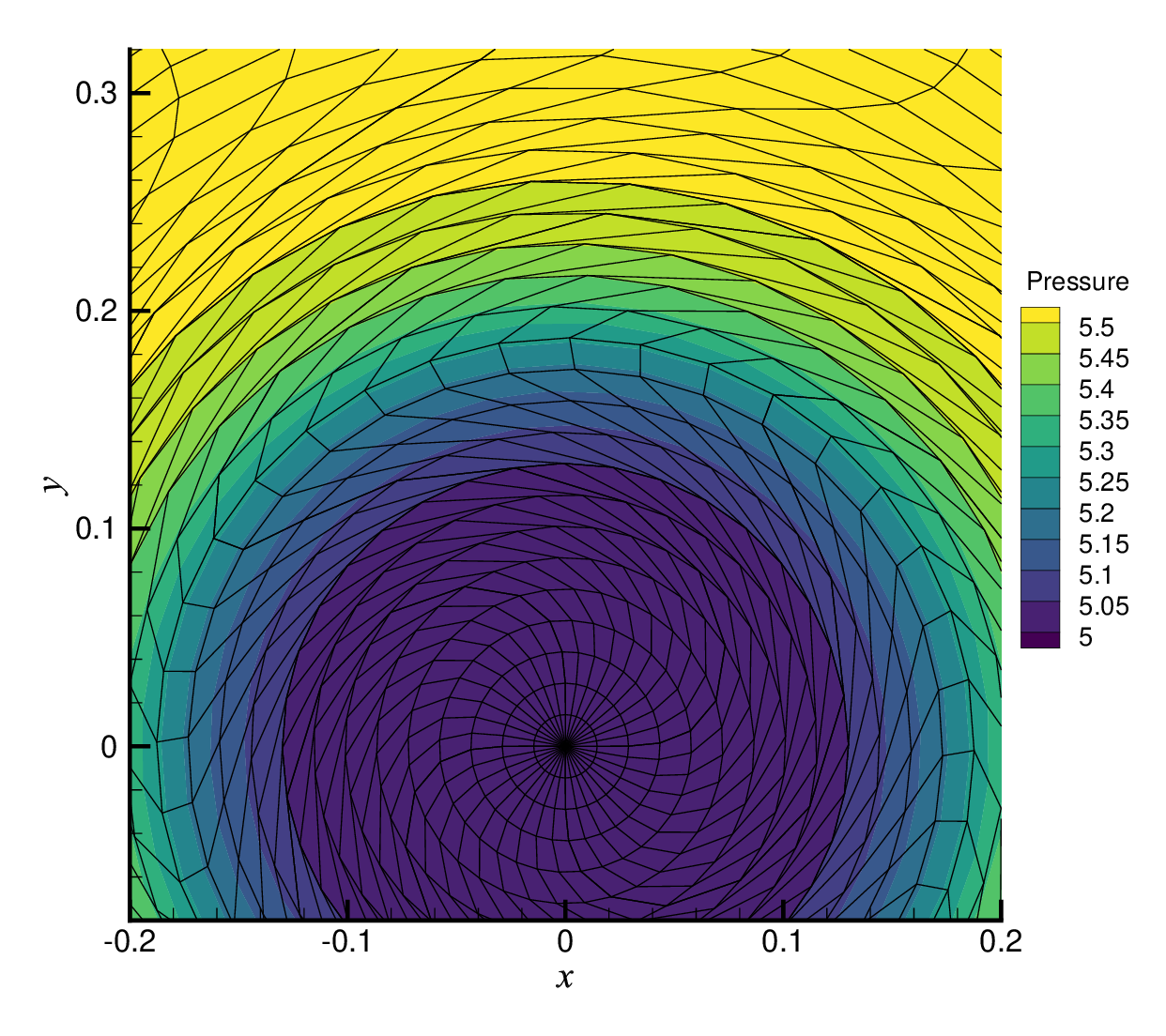}
	\end{subfigure} \\
	\caption{The global view (top) and close-up view (below) of pressure contours with mesh using LMCV at $t=0.2$ (left) and $t=0.5$ (right) are shown for Gresho vortex with mesh of $36\times 36$ size. }
	\label{Fig.Gresho}
\end{figure}

Then, the numerical error and order of accuracy for Gresho vrotex is presented in Table \ref{tab:acc_gresho}. Although the polar coordinate transformation is not used here to remove the nonliearity of the velocity field, the third-order convergence rate is delivered as expected.
\begin{table}[H]
\centering
\caption{Numerical error and convergence order for Gresho vortex at time $t=0.1$.}
\label{tab:acc_gresho}
\begin{small}
\begin{tabular}{lllllllll}
\hline
                & \multicolumn{2}{l}{Density} &  & \multicolumn{2}{l}{Momentum} &  & \multicolumn{2}{l}{Internal energy} \\ \cline{2-3} \cline{5-6} \cline{8-9} 
Mesh            & $L_2$ Error     & Order     &  & $L_2$ Error      & Order     &  & $L_2$ Error         & Order         \\ \hline
$25\times 25$   & 5.25E-04        & -         &  & 1.18E-02         & -         &  & 1.15E-02            & -             \\
$50\times 50$   & 7.85E-05        & 2.74      &  & 1.87E-03         & 2.65      &  & 1.68E-03            & 2.78          \\
$100\times 100$ & 1.01E-05        & 2.95      &  & 2.50E-04         & 2.91      &  & 2.15E-04            & 2.96          \\
$200\times 200$ & 1.26E-06        & 3.00      &  & 3.17E-05         & 2.98      &  & 2.68E-05            & 3.00          \\ \hline
\end{tabular}

\end{small}
\end{table}

\subsection{Sod problem}
This well-known problem \cite{Sod1} consists of a shock tube of unity length. The computational domain is rigid box $[0,1]^2$ filled with two kind of static gas where the interface is located at $x=0.5$. The gas on the left is characterized by density $\rho_L = 1$ and pressure $P_L = 1$, while the right state is defined by $\rho_R = 0.125$ and $P_R = 0.1$. Both sides follow gamma gas law with $\gamma = 1.4$. The initial mesh is a uniform Cartesian grid of $100\times 2$ size. The numerical results obtained with EUCCLHYD scheme (labeled as FV) and LMCV are presented in Fig. \ref{Fig.Sod} as density distributions, which show the compatibility with 1D case and the improvement of high-order solution. The limiting procedure with $S_c=50$ is used here.
\begin{figure}[!htbp]
	\centering  
	\begin{subfigure}{0.43\linewidth}
		\centering
		\includegraphics[width=\textwidth]{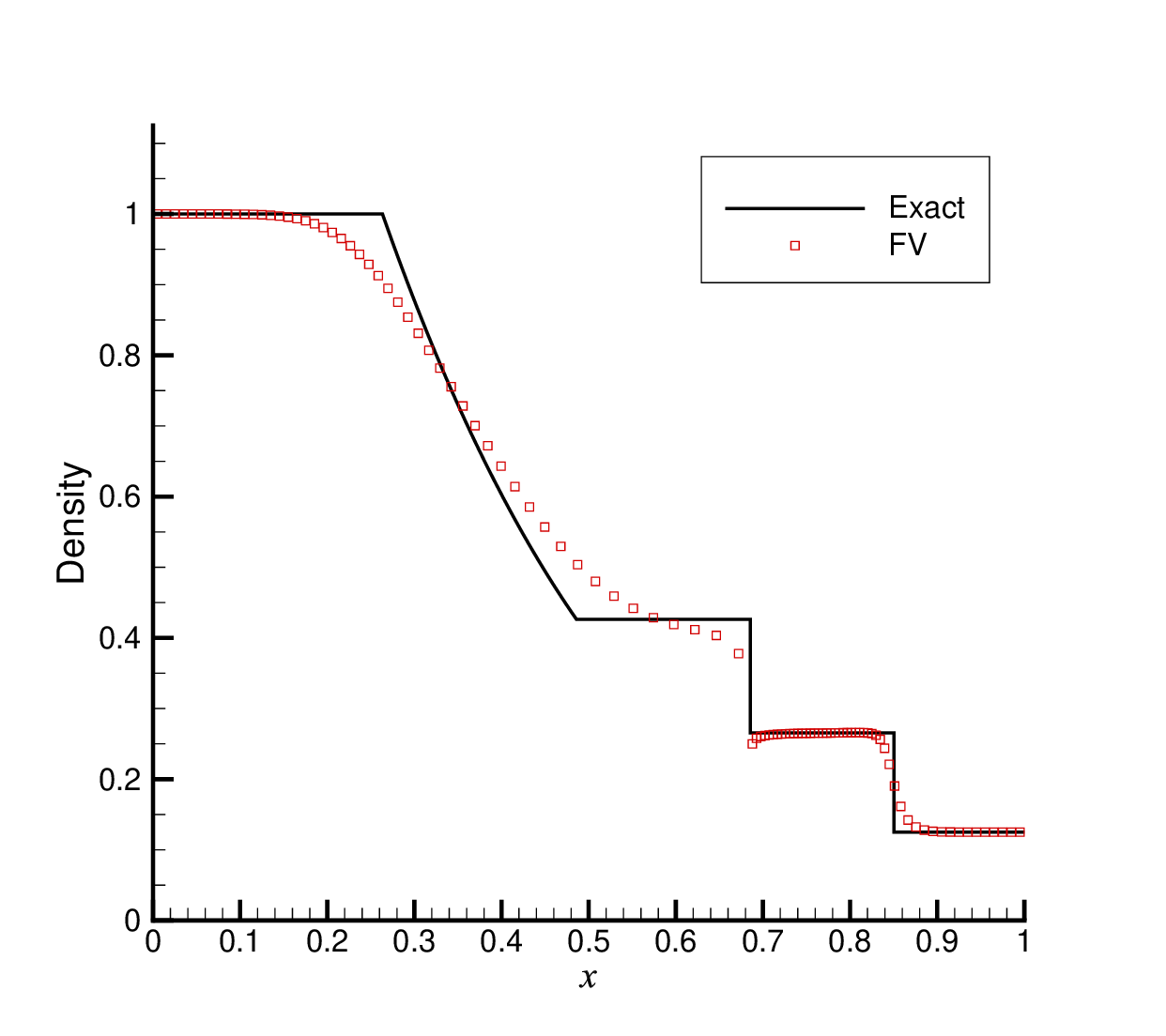}
	\end{subfigure}
	\begin{subfigure}{0.43\linewidth}
		\centering
		\includegraphics[width=\textwidth]{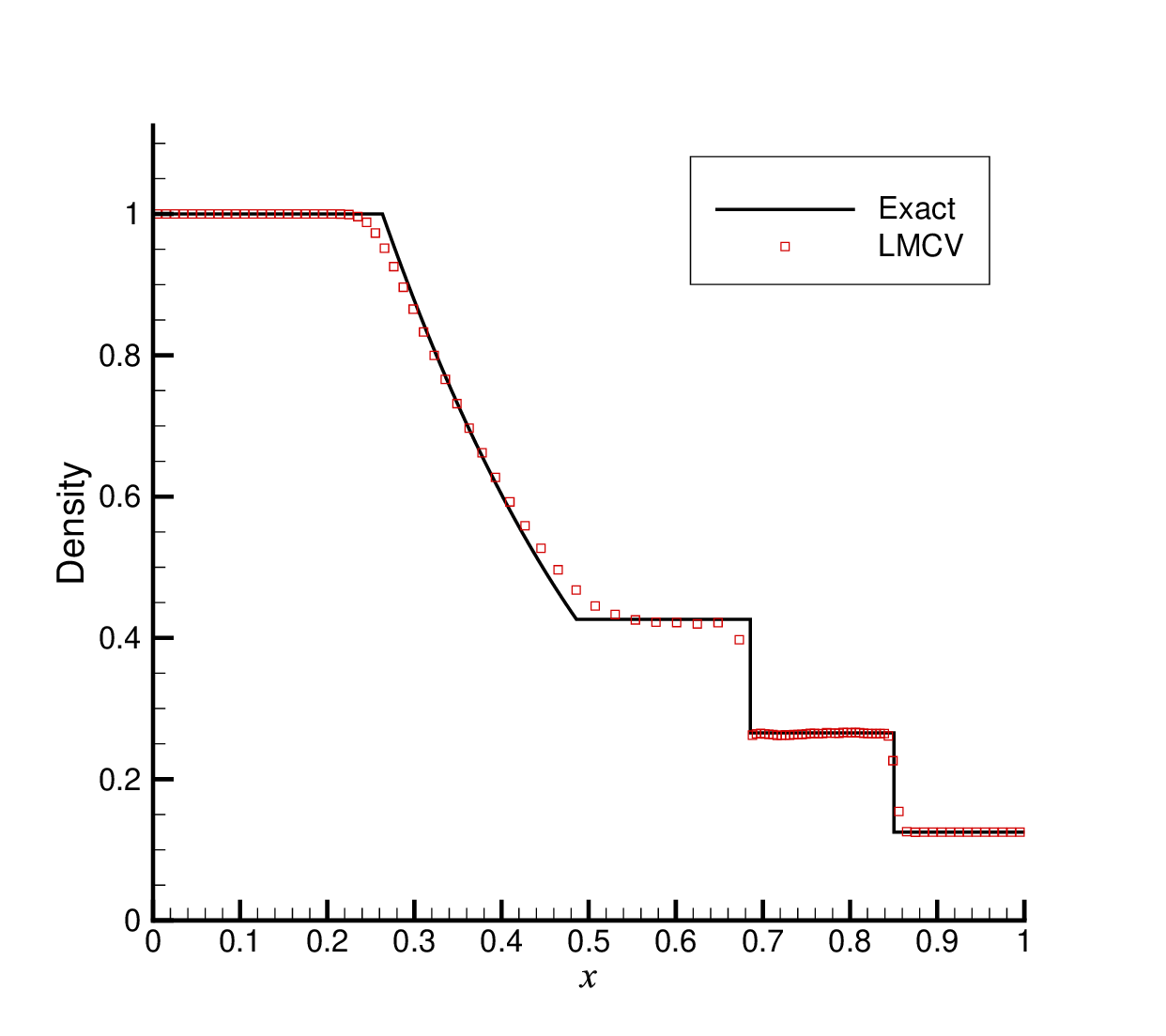}
	\end{subfigure}
	\caption{The scatter plots of density at $t=0.2$ for the Sod problem with EUCCLHYD (left) and LMCV (right). The black solid line represents the exact density distribution. }
	\label{Fig.Sod}
\end{figure}

\subsection{Sedov blast problem}
The Sedov test case \cite{sedov2018similarity,pederson2016sedov}
describes an outward traveling blast wave that is triggered by a point energy source. The 2D case is initialized with background as
\begin{eqnarray}
\rho_0 = 1, \quad \bV_0 = \boldsymbol{0},\quad P_0 = 10^{-6}.
\end{eqnarray}
To place the energy source, the pressure $P_0$ of cell $\omega^i$ containing the origin is set to $(\gamma - 1) \rho_0 \frac{E_0}{|\omega^i|}$, where $\gamma = 1.4$ and total energy $E_0 = 0.244816$ so that the shock front is located at $r=1$ when $t=1$ in the exact solution. The computational domain is a square $[0,1.2]^2$ with all rigid boundaries. The limiting procedure of LMCV is applied in this test case with $S_c = 10^5$ since the presence of shock waves. Initially uniform mesh is used here with $50\times 50$ resolution, and the final time is $t=1$. The final meshes and density contours are shown in Fig. \ref{Fig.Sedov-mesh}. For comparison purpose, the results from EUCCLHYD scheme (labeled as FV) are also demonstrated. 
\begin{figure}[!htbp]
	\centering  
	\begin{subfigure}{0.43\linewidth}
		\centering
		\includegraphics[width=\textwidth]{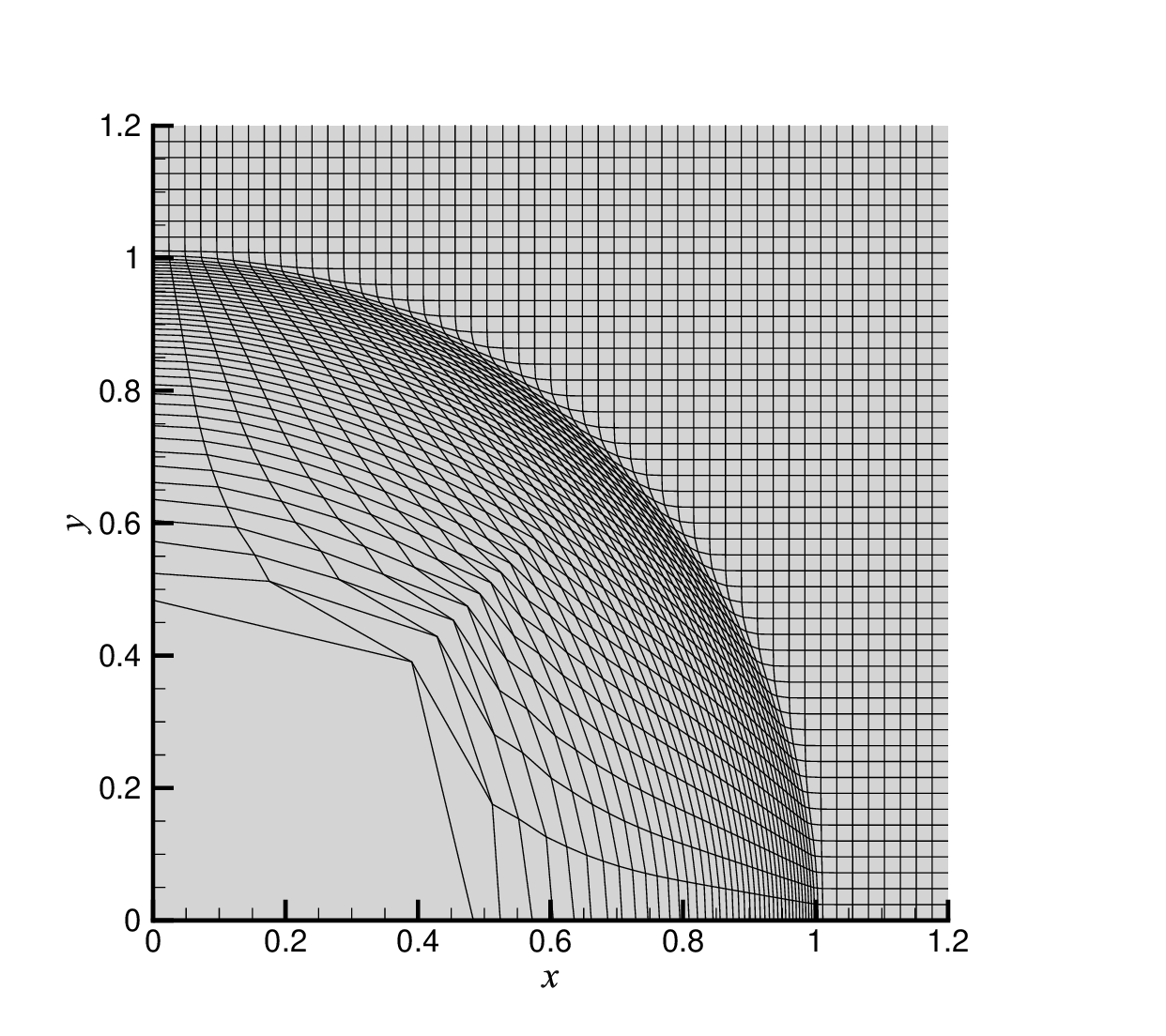}
		\caption{The final mesh for FV}
	\end{subfigure}
	\begin{subfigure}{0.43\linewidth}
		\centering
		\includegraphics[width=\textwidth]{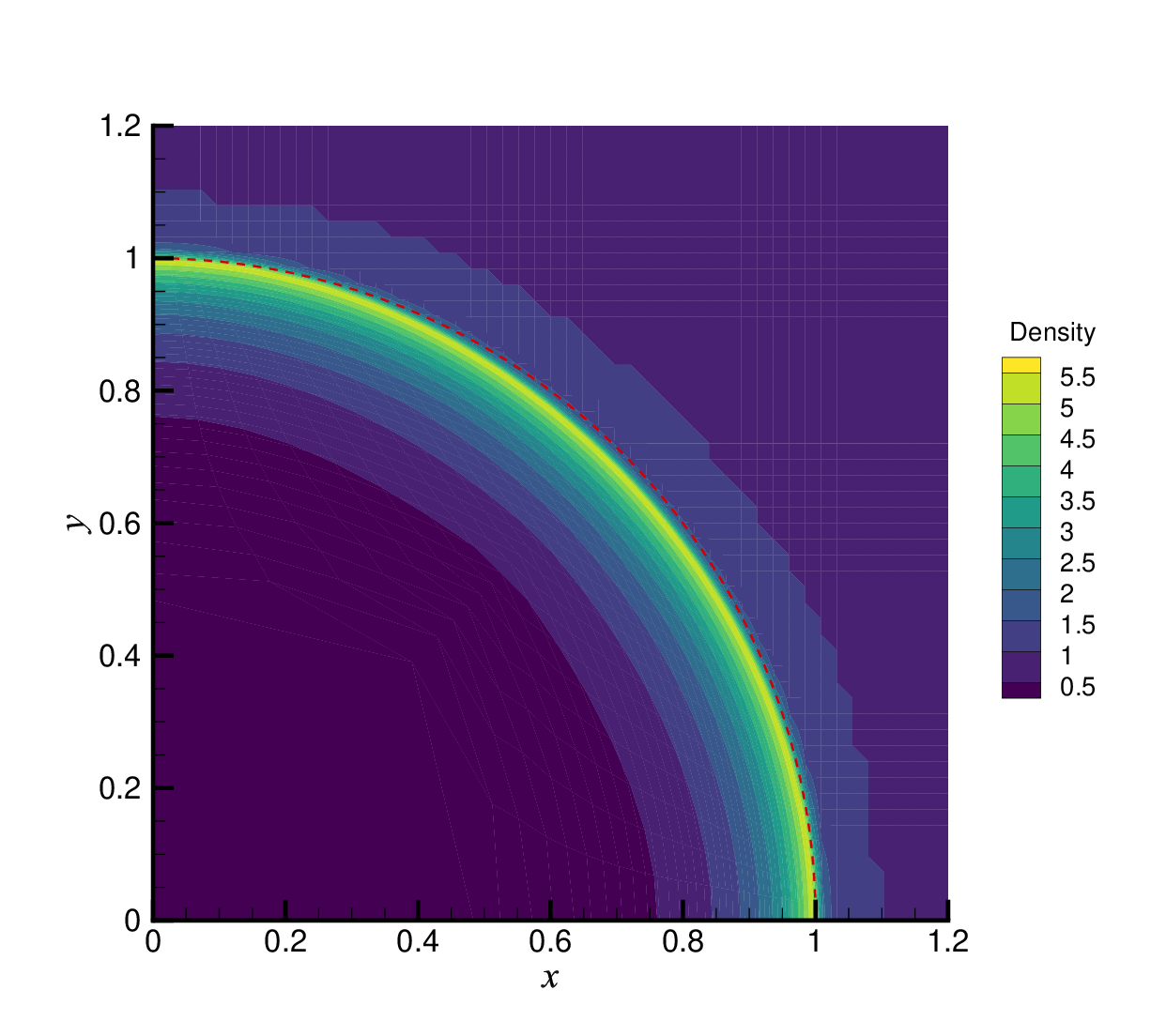}
		\caption{The final density contours for FV}
	\end{subfigure}
	\\
	\begin{subfigure}{0.43\linewidth}
		\centering
		\includegraphics[width=\textwidth]{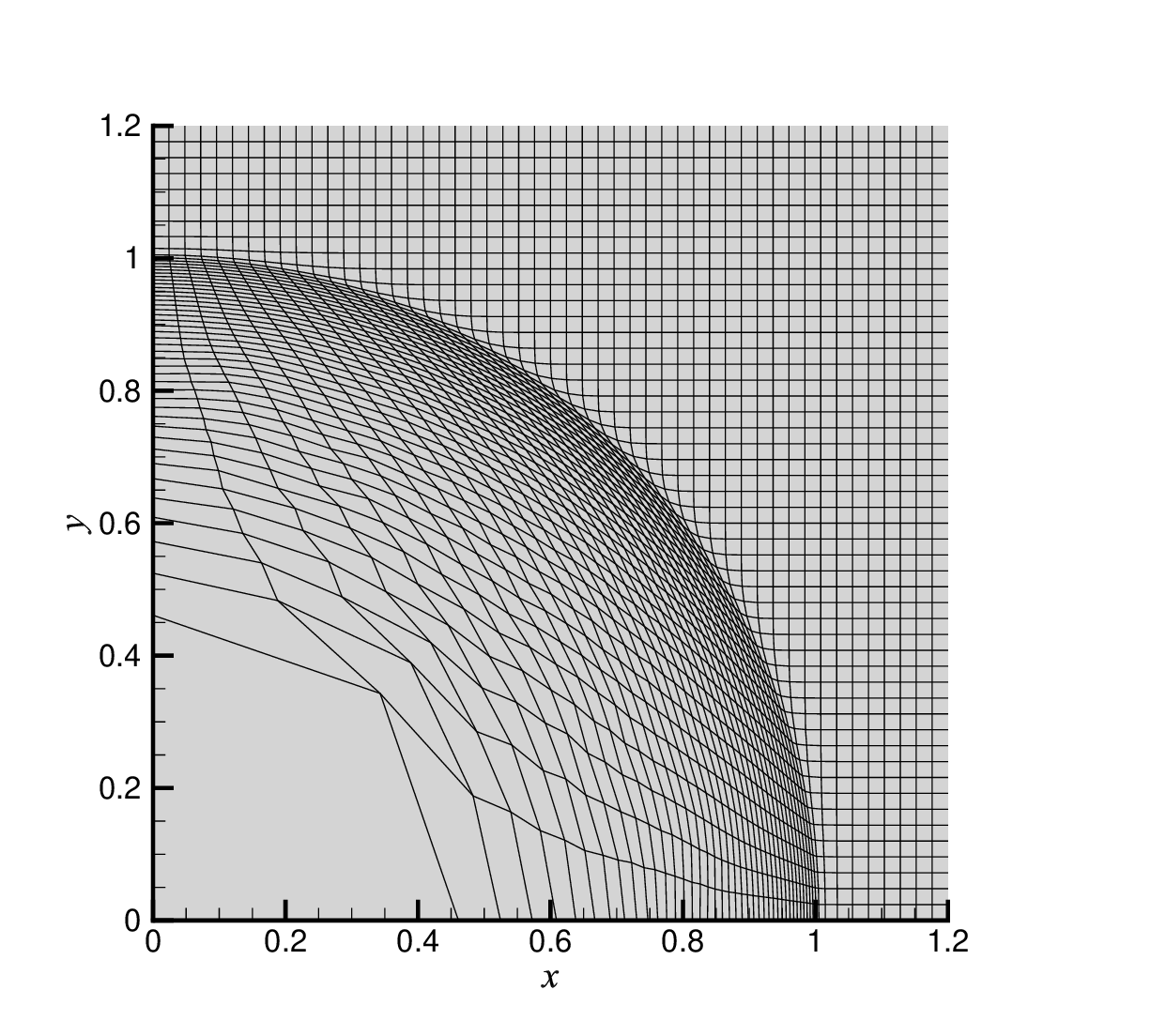}
		\caption{The final mesh for LMCV}
	\end{subfigure}
	\begin{subfigure}{0.43\linewidth}
		\centering
		\includegraphics[width=\textwidth]{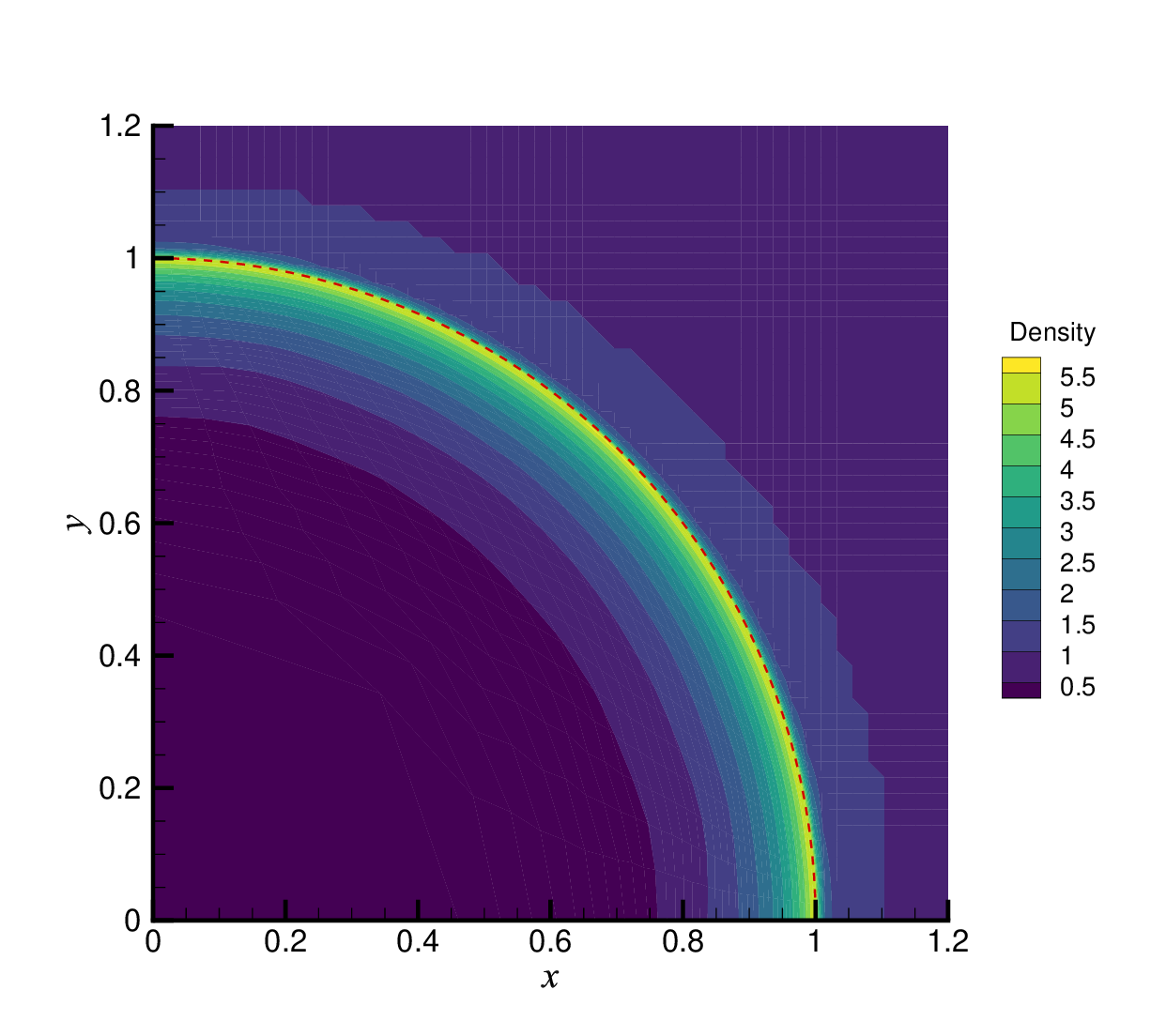}
		\caption{The final density contours for LMCV}
	\end{subfigure}  \\
	\caption{The meshes and density contours for the Sedov blast problem at time $t=1$ with initially uniform mesh of $50\times 50$ size. The red dashed line is the theoretical shock front location. }
	\label{Fig.Sedov-mesh}
\end{figure}

Further more, the scatter plots of density at the final time are shown in Fig. \ref{Fig.Sedov-rad}, in which the exact solution \cite{sedov2018similarity} is drawn with soild line. With comparison, it can be seen that the radial symmetry and sharpness of shock wave are well-preserved with LMCV, which highlights the accuracy of our method without losing robustness in the face of discontinuities.

\begin{figure}[!htbp]
	\centering  
	\begin{subfigure}{0.43\linewidth}
		\centering
		\includegraphics[width=\textwidth]{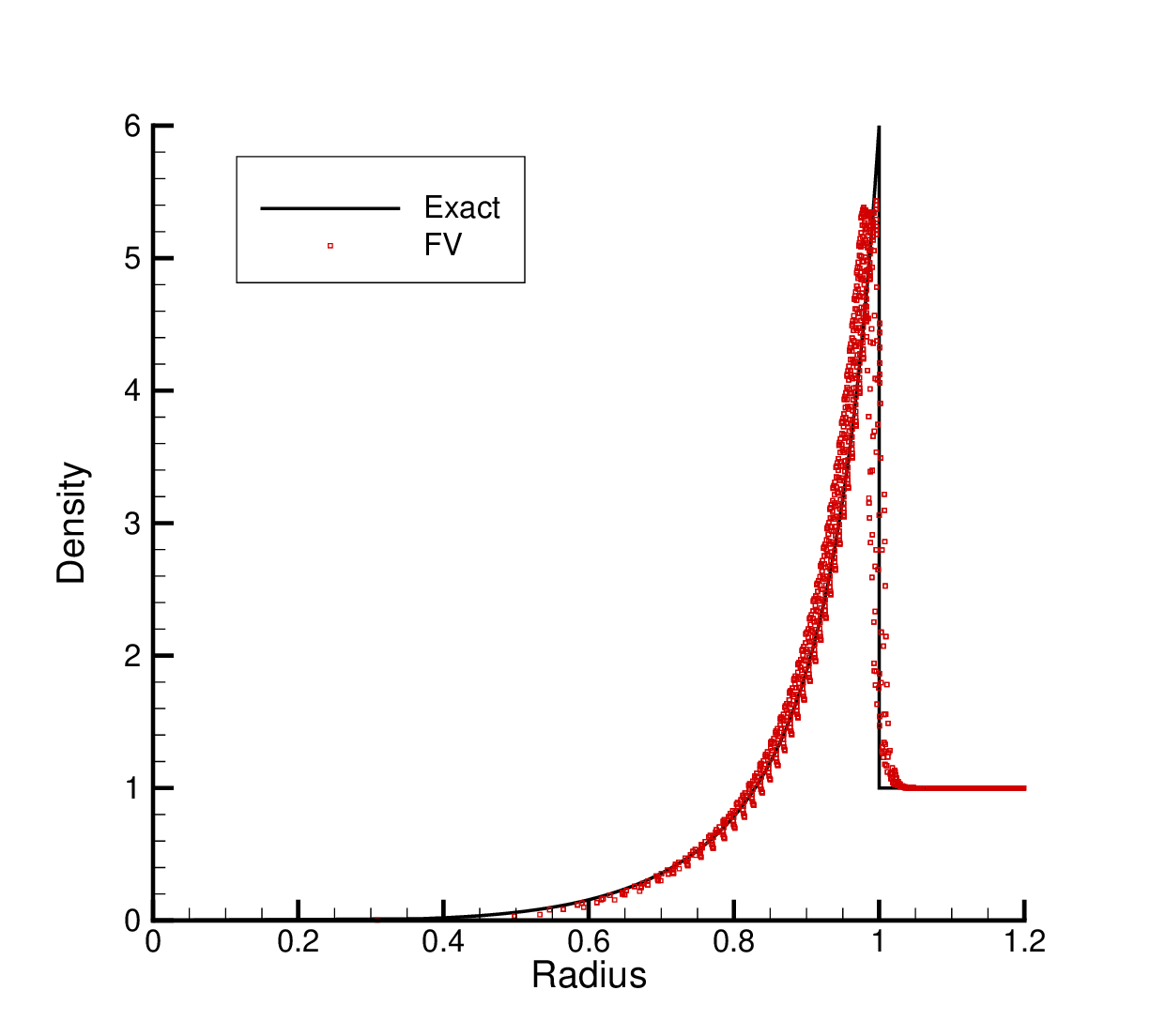}
	\end{subfigure}
	\begin{subfigure}{0.43\linewidth}
		\centering
		\includegraphics[width=\textwidth]{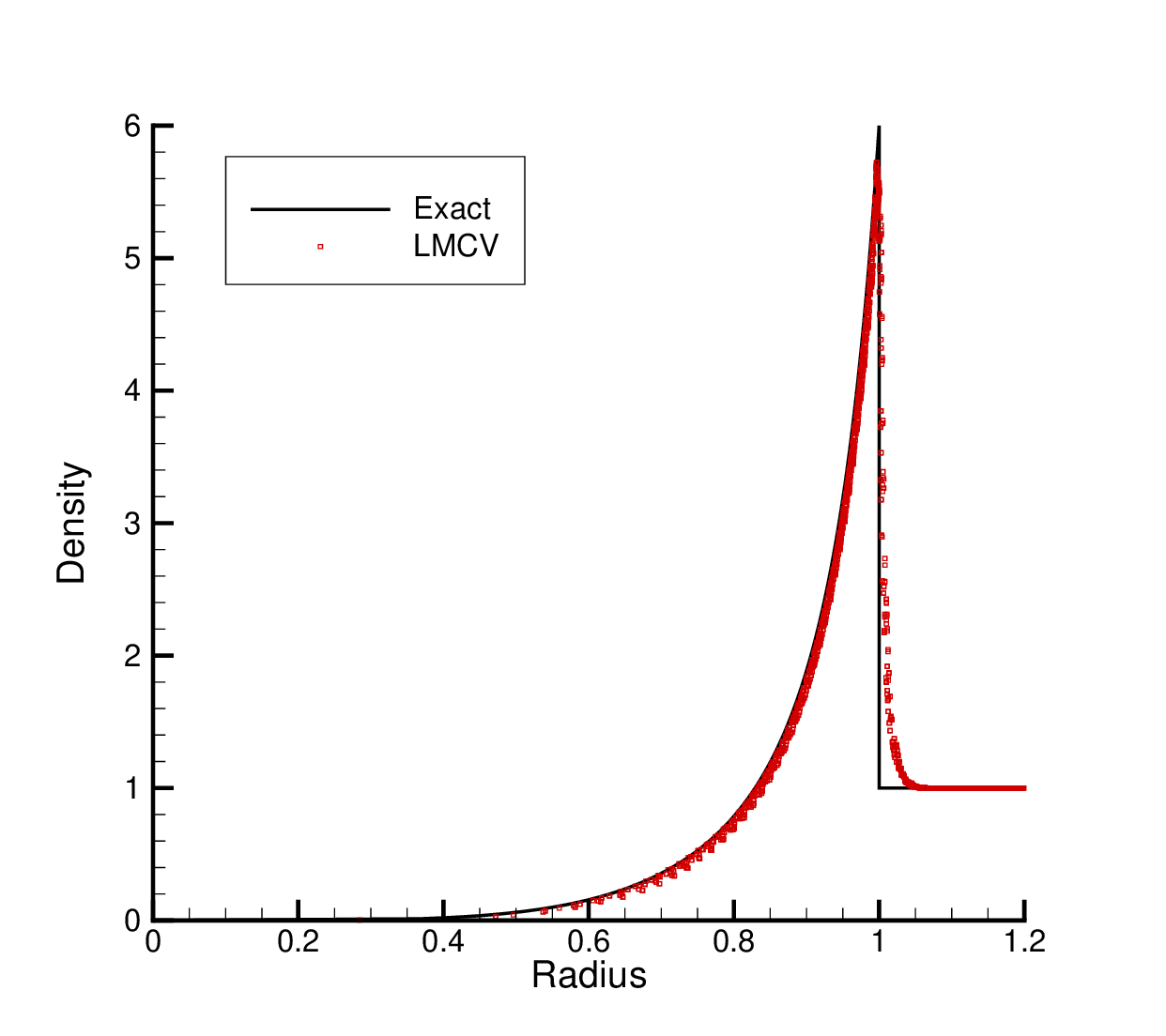}
	\end{subfigure}
	\caption{The scatter plots of density at $t=1$ for the Sedov blast problem with EUCCLHYD (left) and LMCV (right). The black solid line represents the exact density distribution from \cite{sedov2018similarity}. }
	\label{Fig.Sedov-rad}
\end{figure}

\subsection{Noh problem}
Similar to Sedov blast problem, Noh problem \cite{Noh1987} is also often used to test the robustness of Lagrangian schemes. It contains a shock caused by the convergence of a uniform gas towards the origin. More specifically, the ideal gas with $\gamma = 5/3$ is initialized as
\begin{eqnarray*}
\rho_0 = 1,\quad \bV_0 = \frac{u_0}{\sqrt{x^2+y^2}}(-x,-y)^{\top},\quad P_0 = 10^{-6}.
\end{eqnarray*} with convergence velocity $u_0 = 1$.
The shock wave propagates radially outward from the origin at a speed of $\hf (\gamma-1) u_0 = \frac{1}{3}$, leaves a steady platform with a density of $\rho_0\bra{\frac{\gamma+1}{\gamma-1}}^2 = 16$ behind. In this test, the computational domain is defined as $[0,1]^2$ and discretized by uniform mesh of size $50 \times 50$. The presence of shock requires the limiting procedure with $S_c = 10^5$. Results at time $t=0.6$ given by EUCCLHYD and LMCV are shown in Fig. \ref{Fig.Noh-mesh}-\ref{Fig.Noh-rad}. From comparison in Fig. \ref{Fig.Noh-rad}, LMCV gives more accurate density distribution with less scatter along different radial directions. In Fig. \ref{Fig.Noh-local}, our method avoids the false mesh distortion \cite{vilarPositivity2016P2} 
near the middle of the shock front.

\begin{figure}[!htbp]
	\centering  
	\begin{subfigure}{0.48\linewidth}
		\centering
		\includegraphics[width=\textwidth]{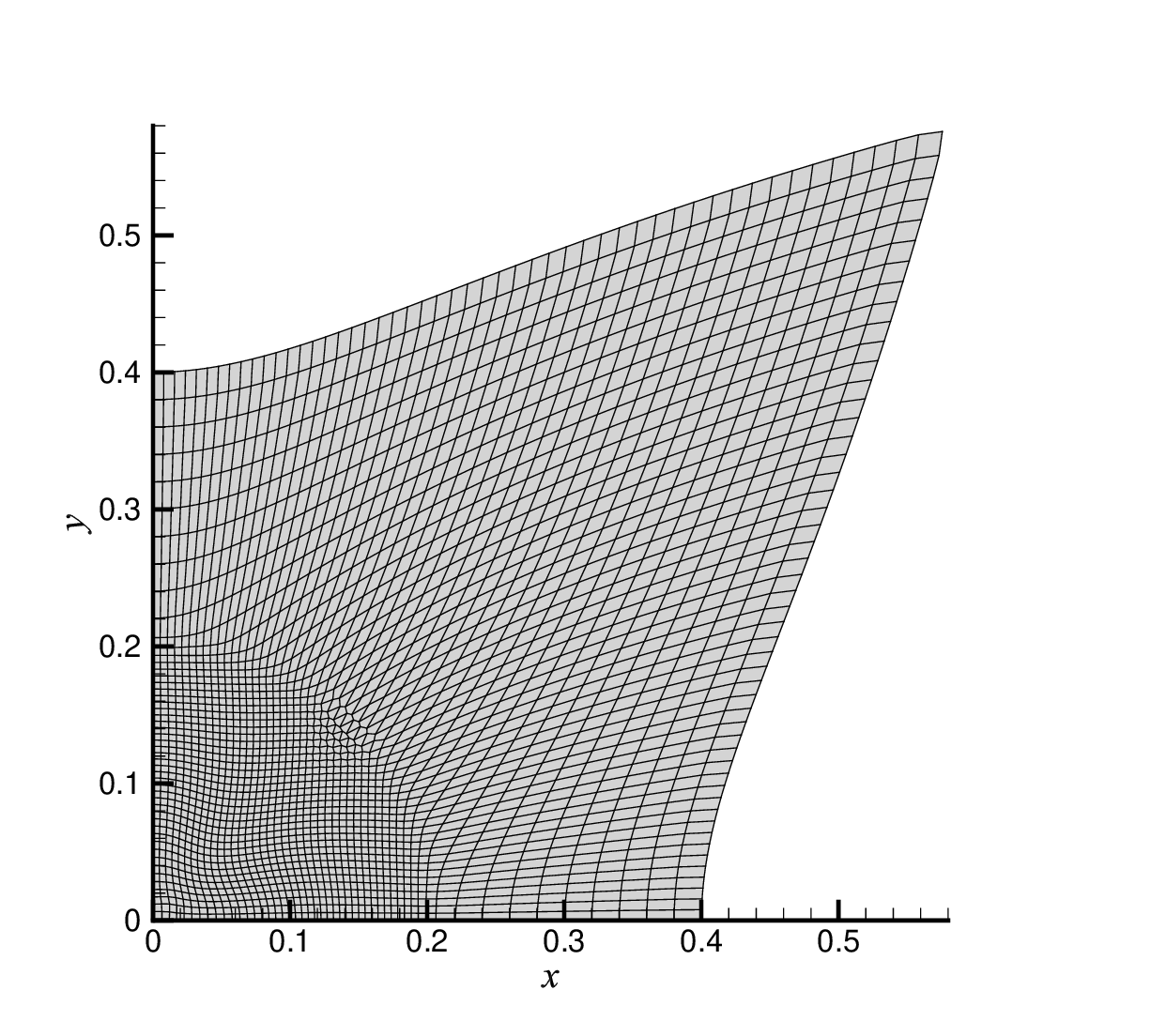}
		\caption{The final mesh for FV}
	\end{subfigure}
	\begin{subfigure}{0.48\linewidth}
		\centering
		\includegraphics[width=\textwidth]{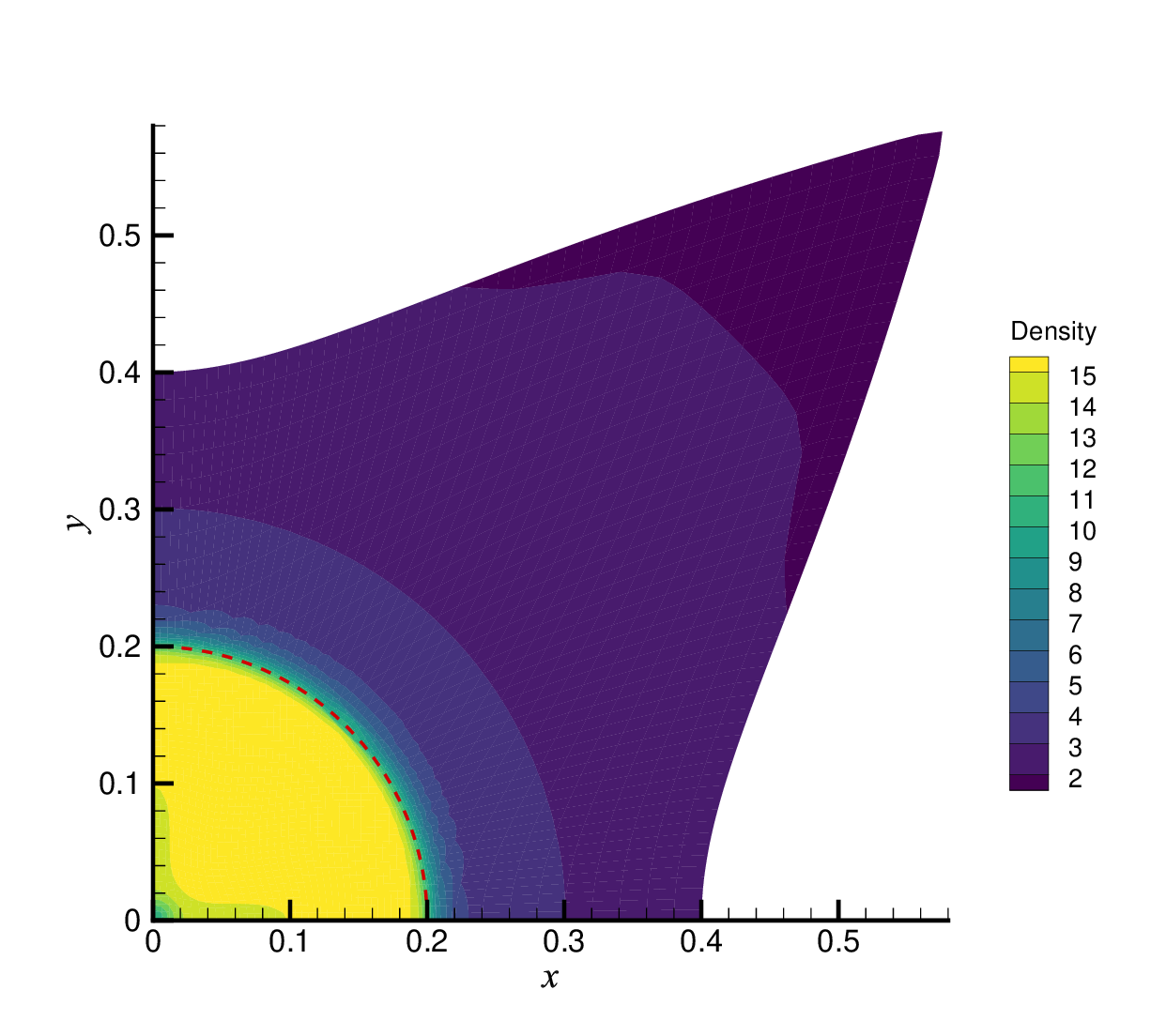}
		\caption{The final density contours for FV}
	\end{subfigure}
	 \\
	 \begin{subfigure}{0.48\linewidth}
		\centering
		\includegraphics[width=\textwidth]{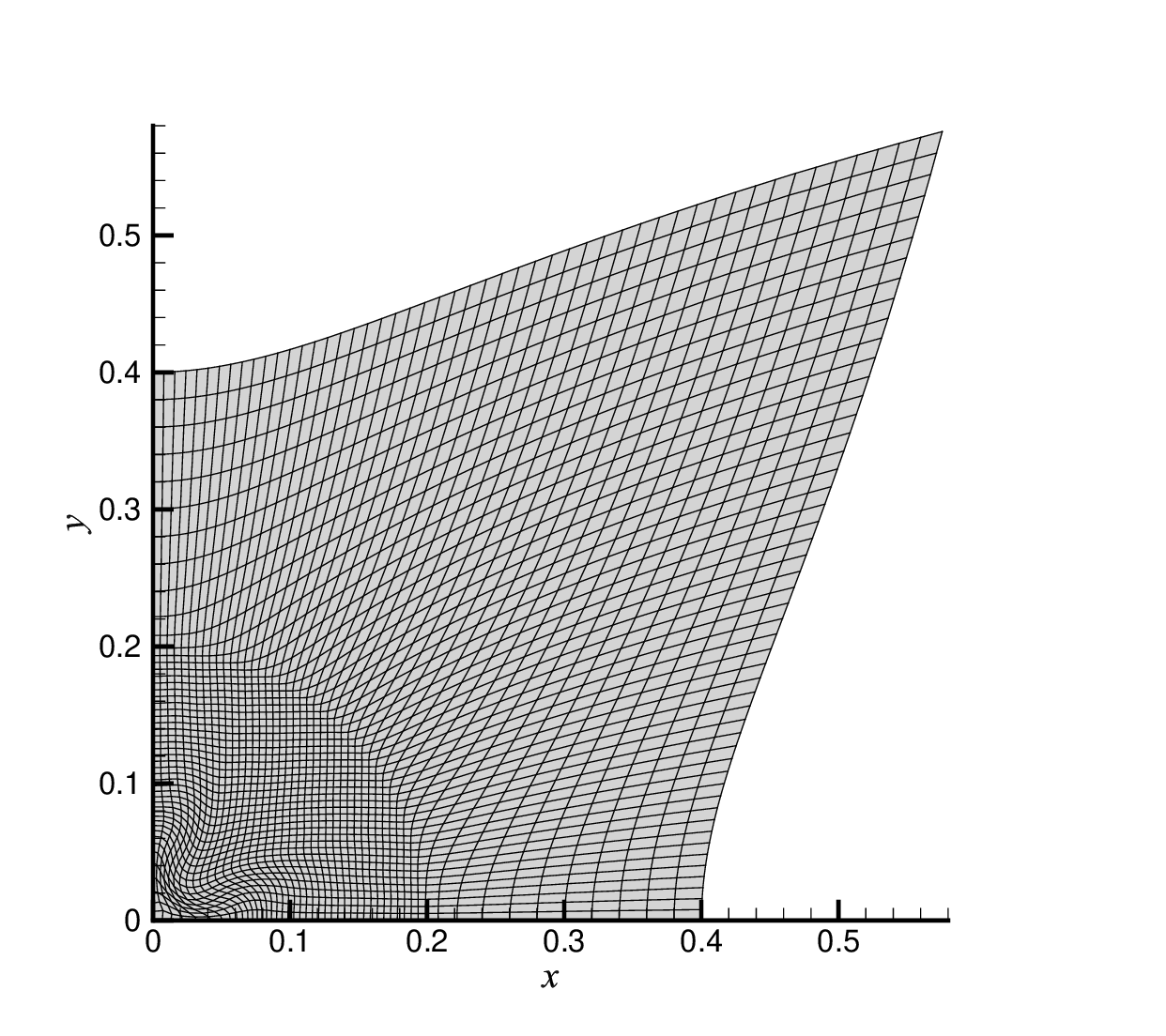}
		\caption{The final mesh for LMCV}
	\end{subfigure}
	\begin{subfigure}{0.48\linewidth}
		\centering
		\includegraphics[width=\textwidth]{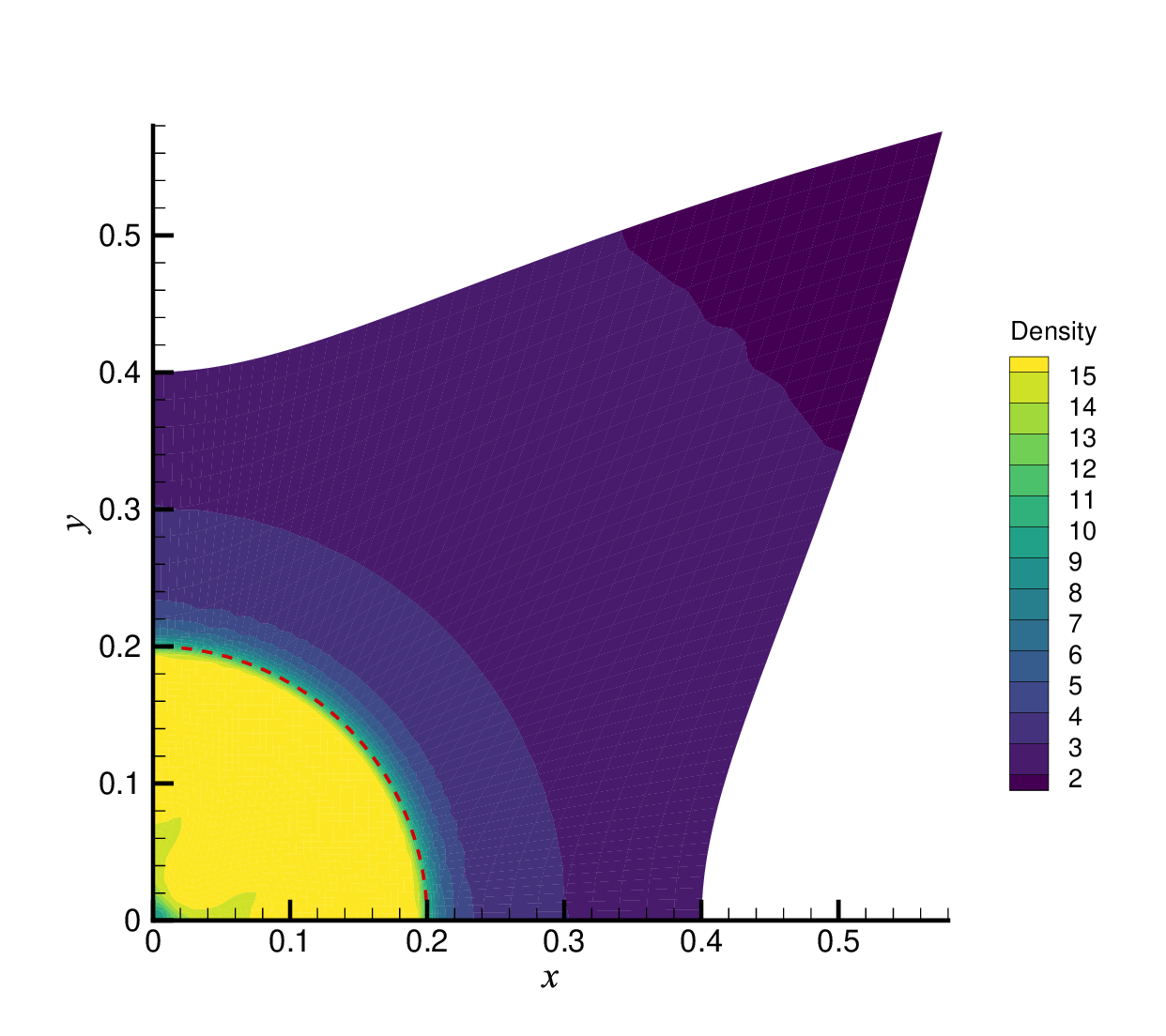}
		\caption{The final density contours for LMCV}
	\end{subfigure}
	 \\
	\caption{The meshes and density contours for Noh problem at time $t=0.6$ with initially uniform mesh of $50 \times 50$ size. The red dashed line is the theoretical shock front location. }
	\label{Fig.Noh-mesh}
\end{figure}

\begin{figure}[!htbp]
	\centering  
	\begin{subfigure}{0.48\linewidth}
		\includegraphics[width=\textwidth]{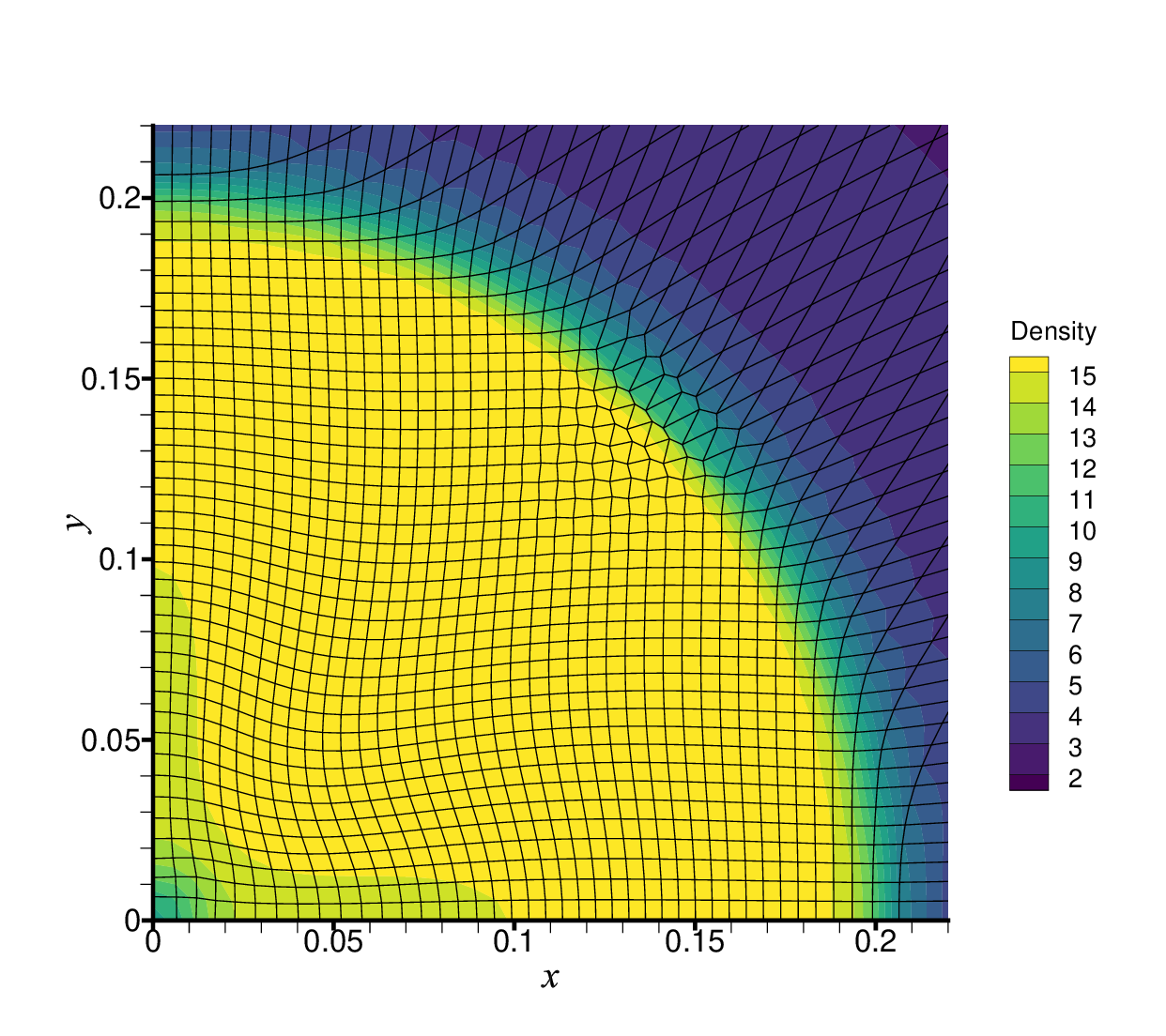}
		\caption{The close-up of final mesh for FV}
		\centering
	\end{subfigure}
	\begin{subfigure}{0.48\linewidth}
		\centering
		\includegraphics[width=\textwidth]{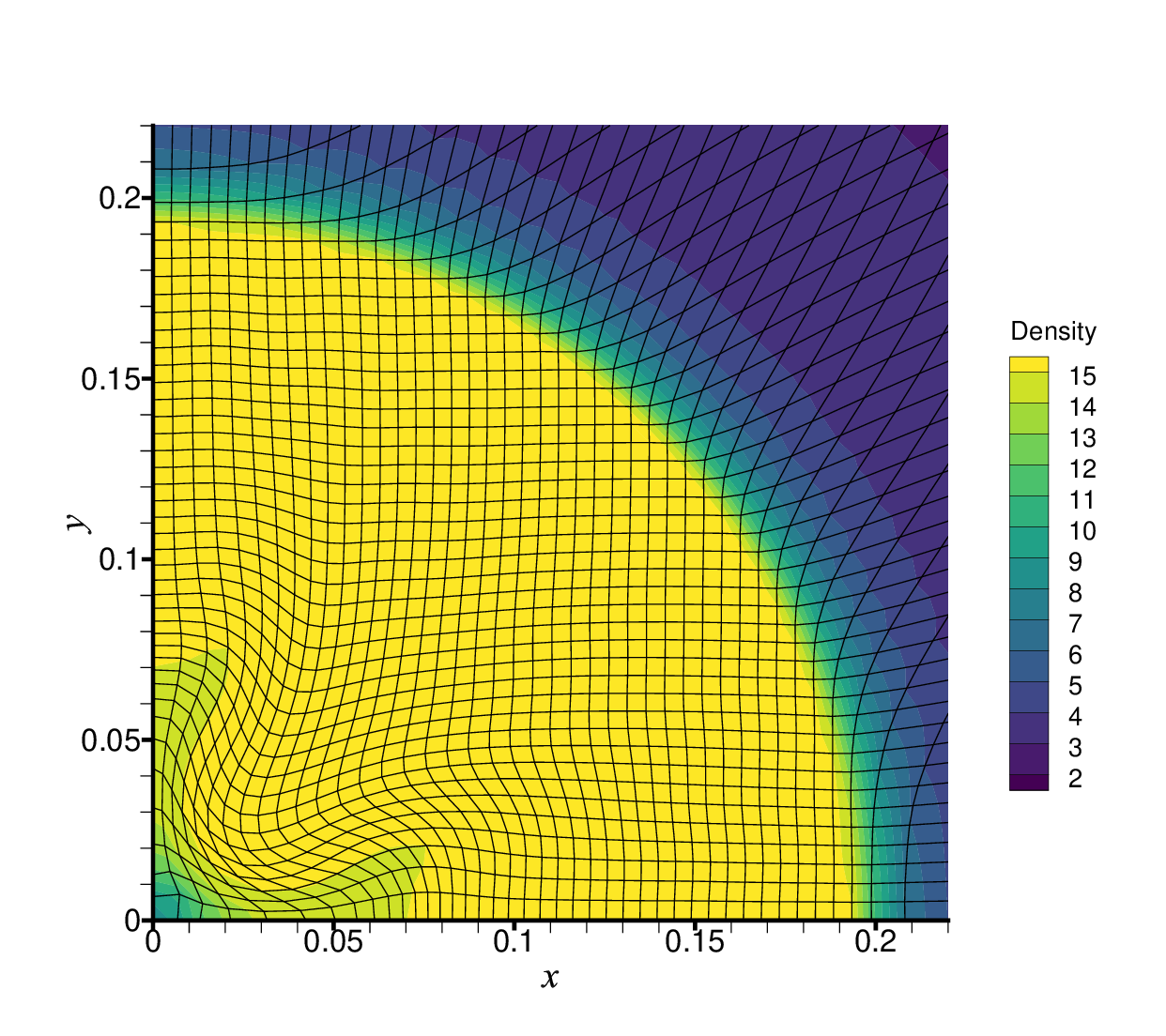}
		\caption{The close-up of final mesh for LMCV}
	\end{subfigure}
	\caption{The close-up of the meshes behind the shock at $t = 0.6$ for the Noh problem. }
	\label{Fig.Noh-local}
\end{figure}

\begin{figure}[!htbp]
	\centering  
	\begin{subfigure}{0.48\linewidth}
		\centering
		\includegraphics[width=\textwidth]{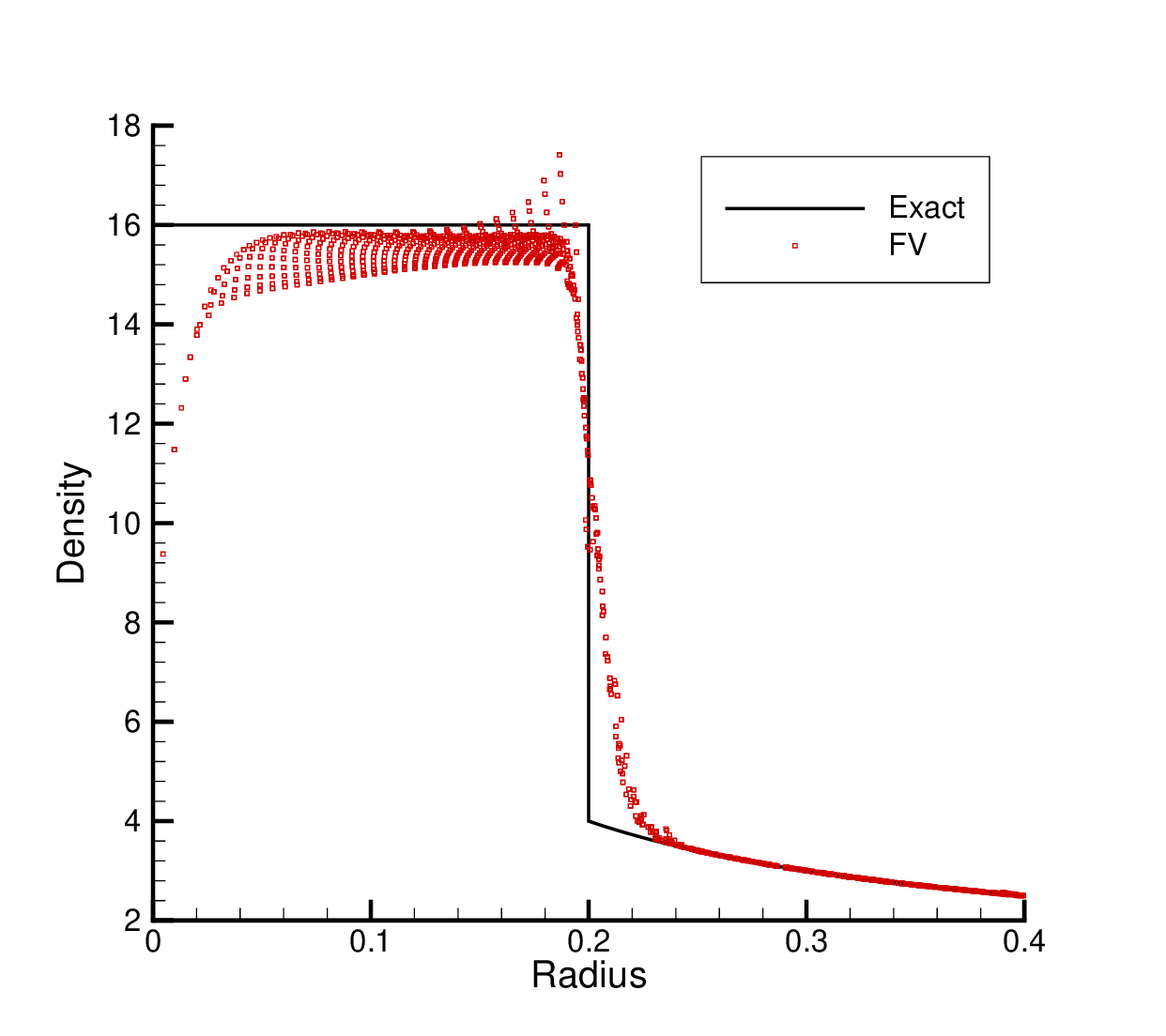}
	\end{subfigure}
	\begin{subfigure}{0.48\linewidth}
		\centering
		\includegraphics[width=\textwidth]{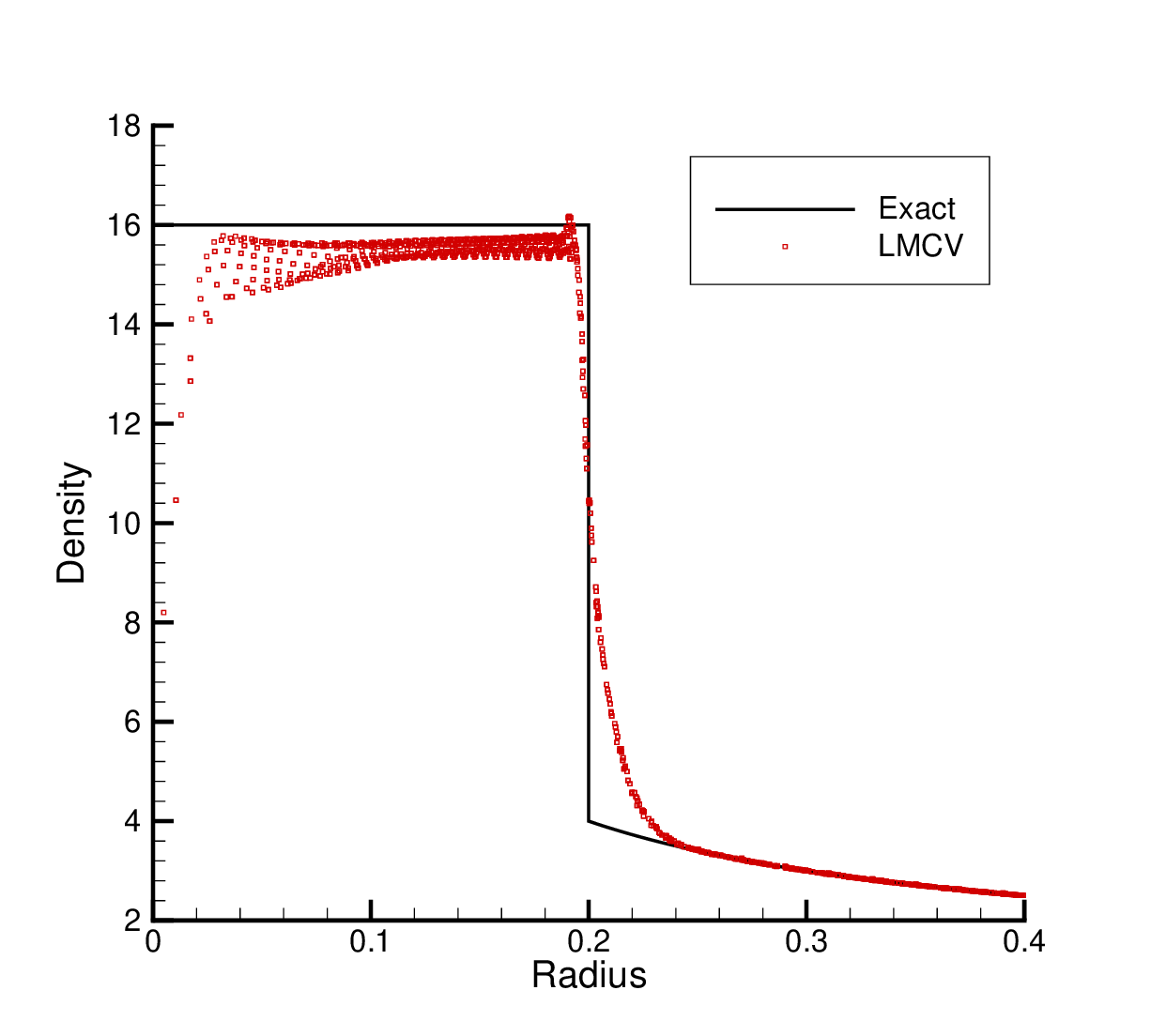}
	\end{subfigure}
	\caption{The scatter plots of density at $t=0.6$ for Noh problem  with EUCCLHYD (left) and LMCV (right). }
	\label{Fig.Noh-rad}
\end{figure}

\subsection{Saltzman problem}
Saltzman problem \cite{dukowicz1992vorticity,margolin1988centered} 
is a challenging problem which describes a piston-driven planar shock passing through a misaligned mesh displayed in Fig. \ref{Fig.Saltzman-m0}, which is given by transforming the uniform $100\times 10$ mesh of rectangle $[0,1]\times[0,0.1]$ with the following mapping
\begin{eqnarray*}
\boldsymbol{\phi}(x,y) = (x+(0.1-y)\sin(\pi x), y).
\end{eqnarray*}
The physical domain is initially filled with ideal gas with $\gamma = 5/3$ and state
\begin{eqnarray}
\rho_0 =1, \quad \bV_0 = \boldsymbol{0},\quad P_0 = 10^{-6}.
\end{eqnarray} Boundaries are fixed except the left boundary on which a unity inward normal velocity is prescribed.
Since the compression raises a shock wave, the limiting procedure with $S_c = 5 \times 10^5$ is adopted in this test. From start,
the shock forms at the left boundary and propagates to the right at a velocity of 4/3. The shock compresses the gas behind to a desity equal to 4, and first hits the right boundary at $t = 0.75$. After reflecting from the right boundary, the shock travels to the left and leave a steady field with density $\rho =10$ behind until $t=0.9$, when the shock hits the left boundary. The meshes and density contours at $t = 0.6$, $t = 0.75$ and $t = 0.9$ are shown in Fig. \ref{Fig.Saltzman-mesh}. Comparing with results given by EUCCLHYD, meshes from LMCV move in a more stable and smooth manner with less distortion near the left boundary. Moreover ,the density scatter plot of LMCV agrees well with the analytical solution in Fig. \ref{Fig.Saltzman-x}.

\begin{figure}[!htbp]
	\centering  
	\begin{subfigure}{\linewidth}
		\centering
		\includegraphics[width=\textwidth]{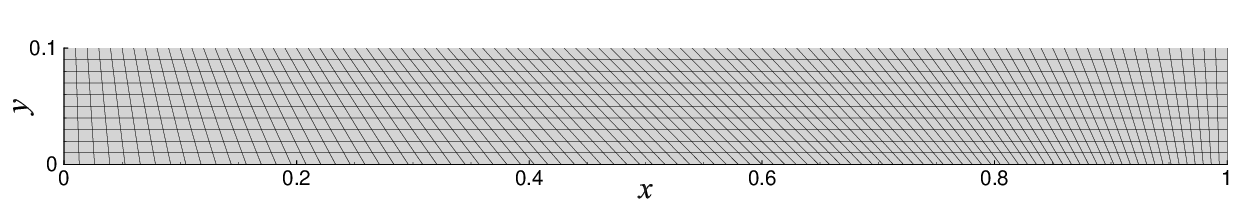}
	\end{subfigure}
	\caption{The initial mesh for Saltzman problem.}
	\label{Fig.Saltzman-m0}
\end{figure}

\begin{figure}[!htbp]
	\centering  
	\begin{subfigure}{0.6\linewidth}
		\centering
		\includegraphics[width=\textwidth]{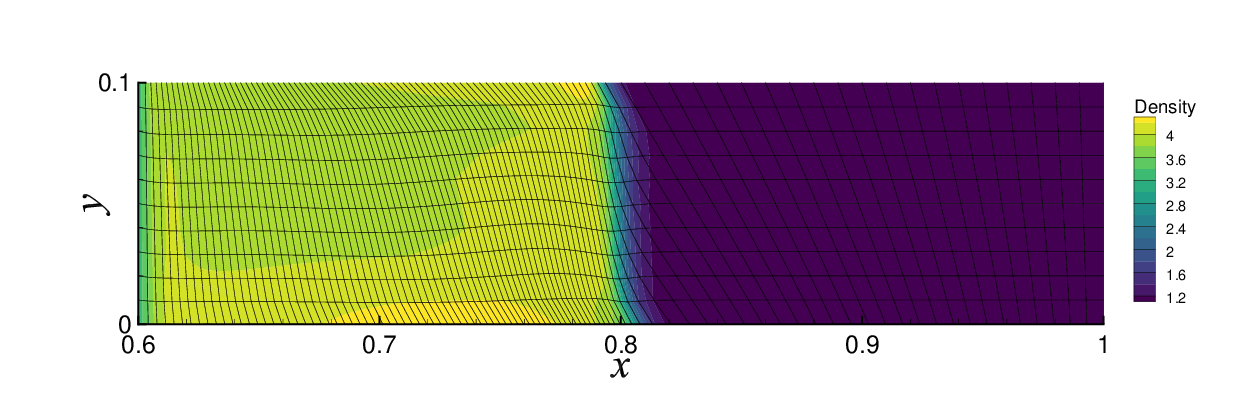}
		\caption{FV at $t=0.6$}
	\end{subfigure}
	\begin{subfigure}{0.39\linewidth}
		\centering
		\includegraphics[width=\textwidth]{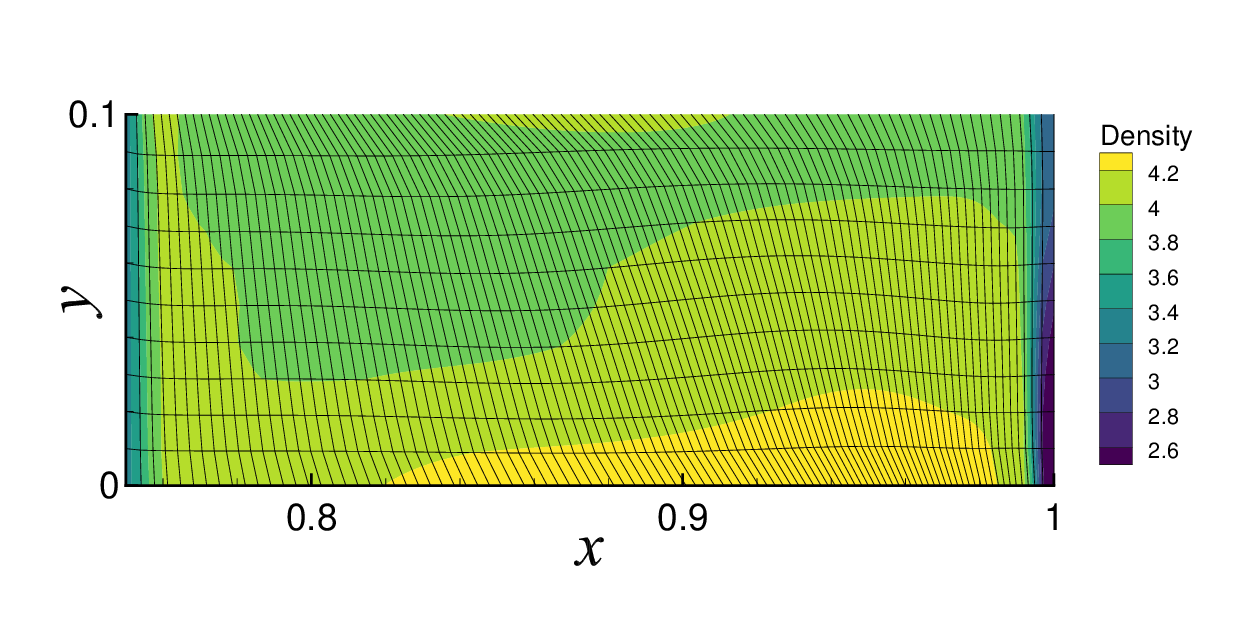}
		\caption{FV at $t=0.75$}
	\end{subfigure}
	\begin{subfigure}{0.6\linewidth}
		\centering
		\includegraphics[width=\textwidth]{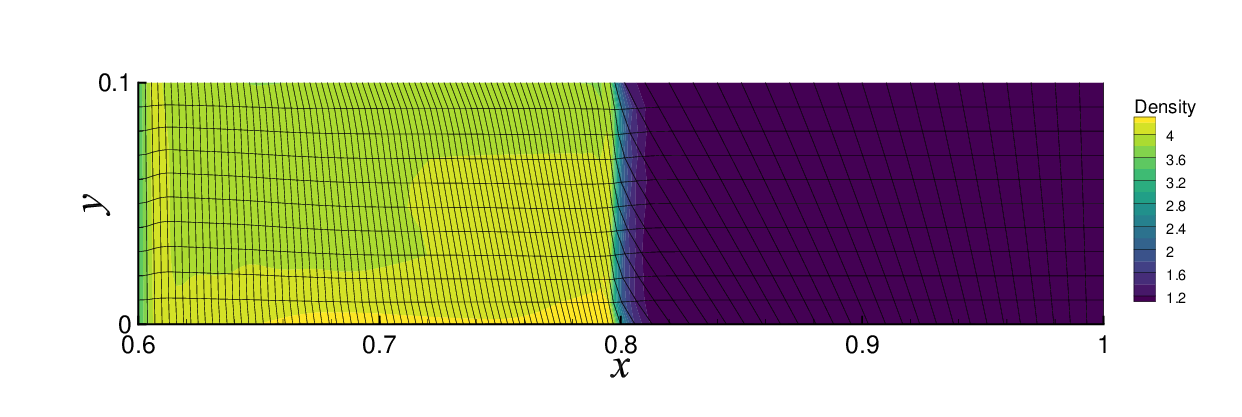}
		\caption{LMCV at $t=0.6$}
	\end{subfigure}
	\begin{subfigure}{0.39\linewidth}
		\centering
		\includegraphics[width=\textwidth]{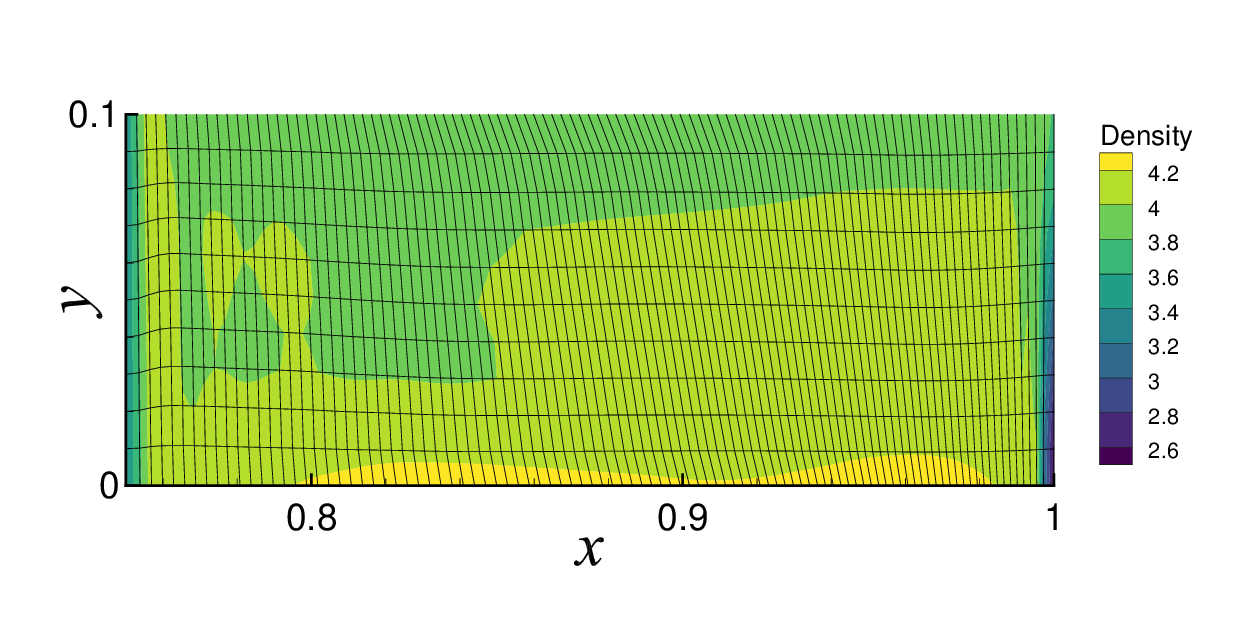}
		\caption{LMCV at $t=0.75$}
	\end{subfigure}
	\begin{subfigure}{0.45\linewidth}
		\centering
		\includegraphics[width=\textwidth]{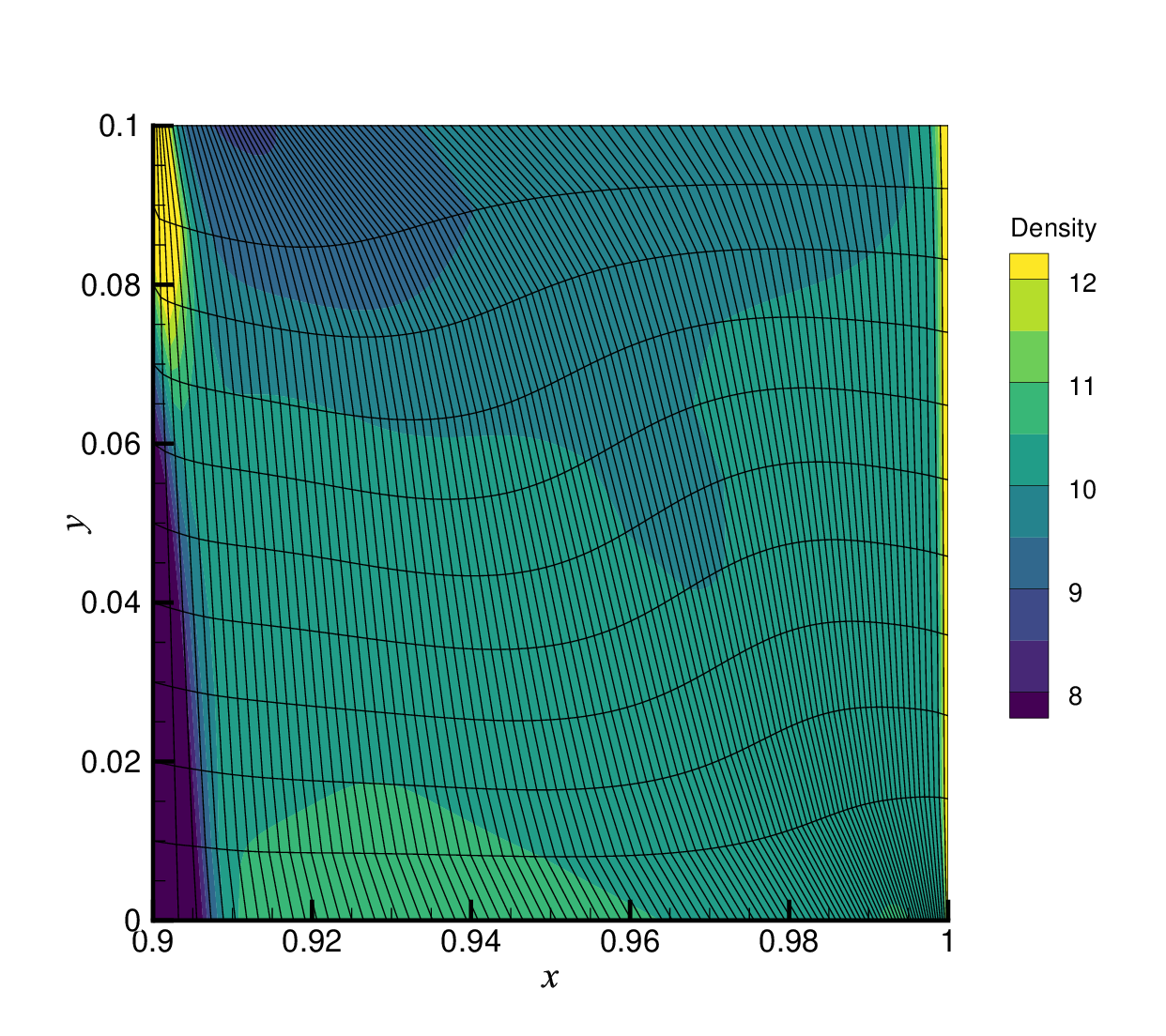}
		\caption{FV at $t=0.9$}
	\end{subfigure}
	\begin{subfigure}{0.45\linewidth}
		\centering
		\includegraphics[width=\textwidth]{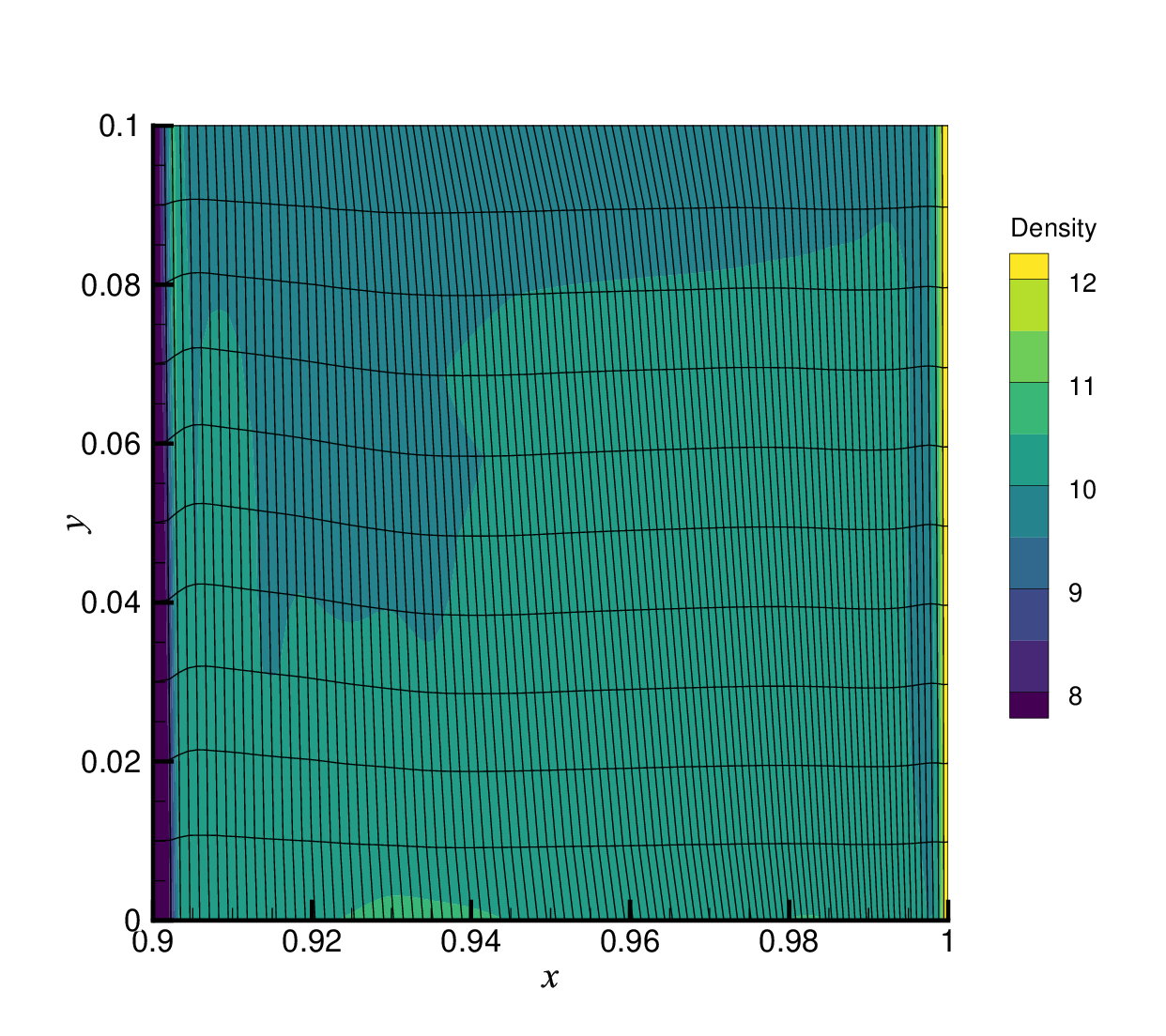}
		\caption{LMCV at $t=0.9$}
	\end{subfigure}
	\caption{The density contours with mesh for Saltzman problem at different time. }
	\label{Fig.Saltzman-mesh}
\end{figure}

\begin{figure}[!htbp]
	\centering  
	\begin{subfigure}{0.43\linewidth}
		\centering
		\includegraphics[width=\textwidth]{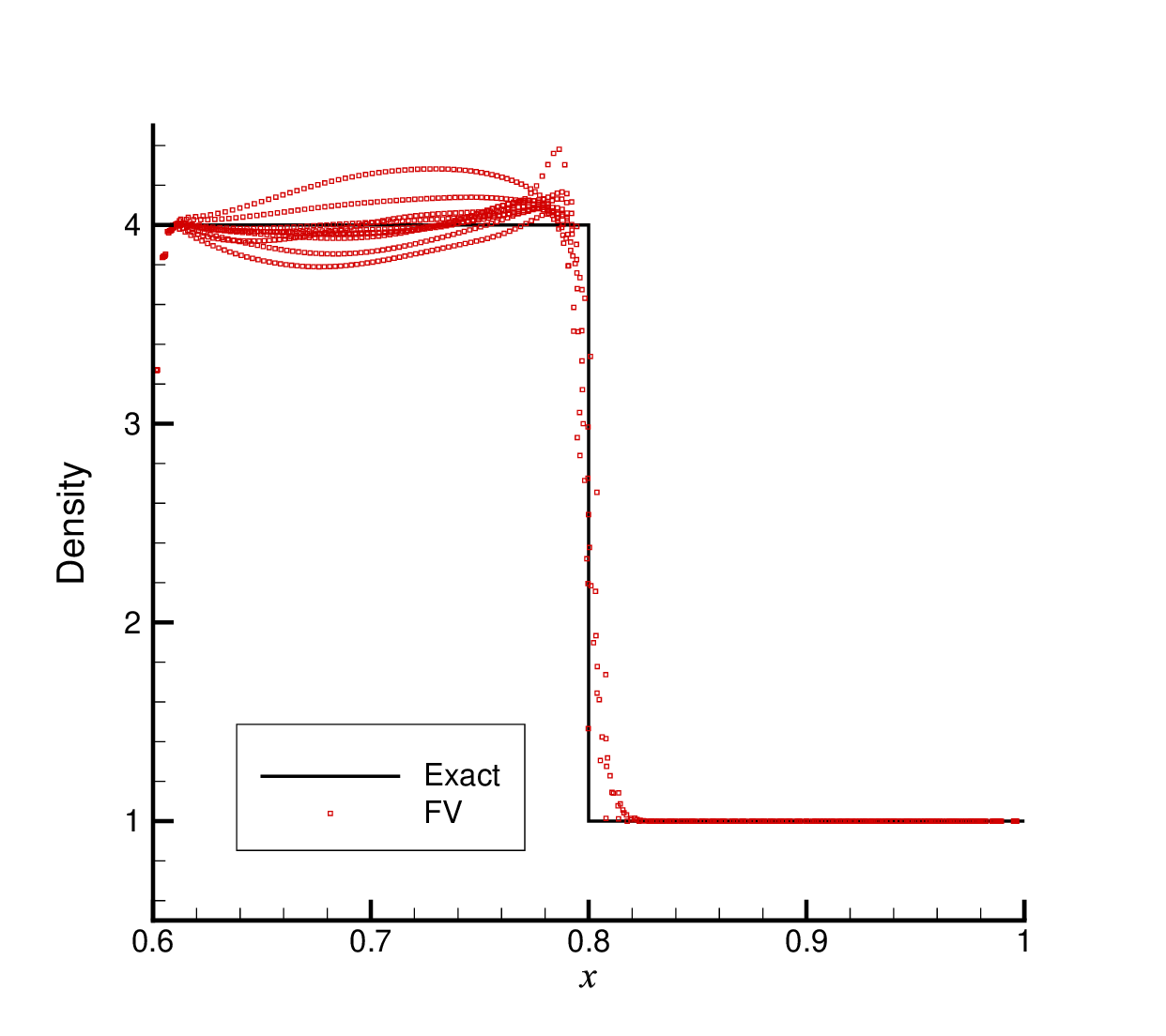}
		\caption{FV at $t=0.6$}
	\end{subfigure}
	\begin{subfigure}{0.43\linewidth}
		\centering
		\includegraphics[width=\textwidth]{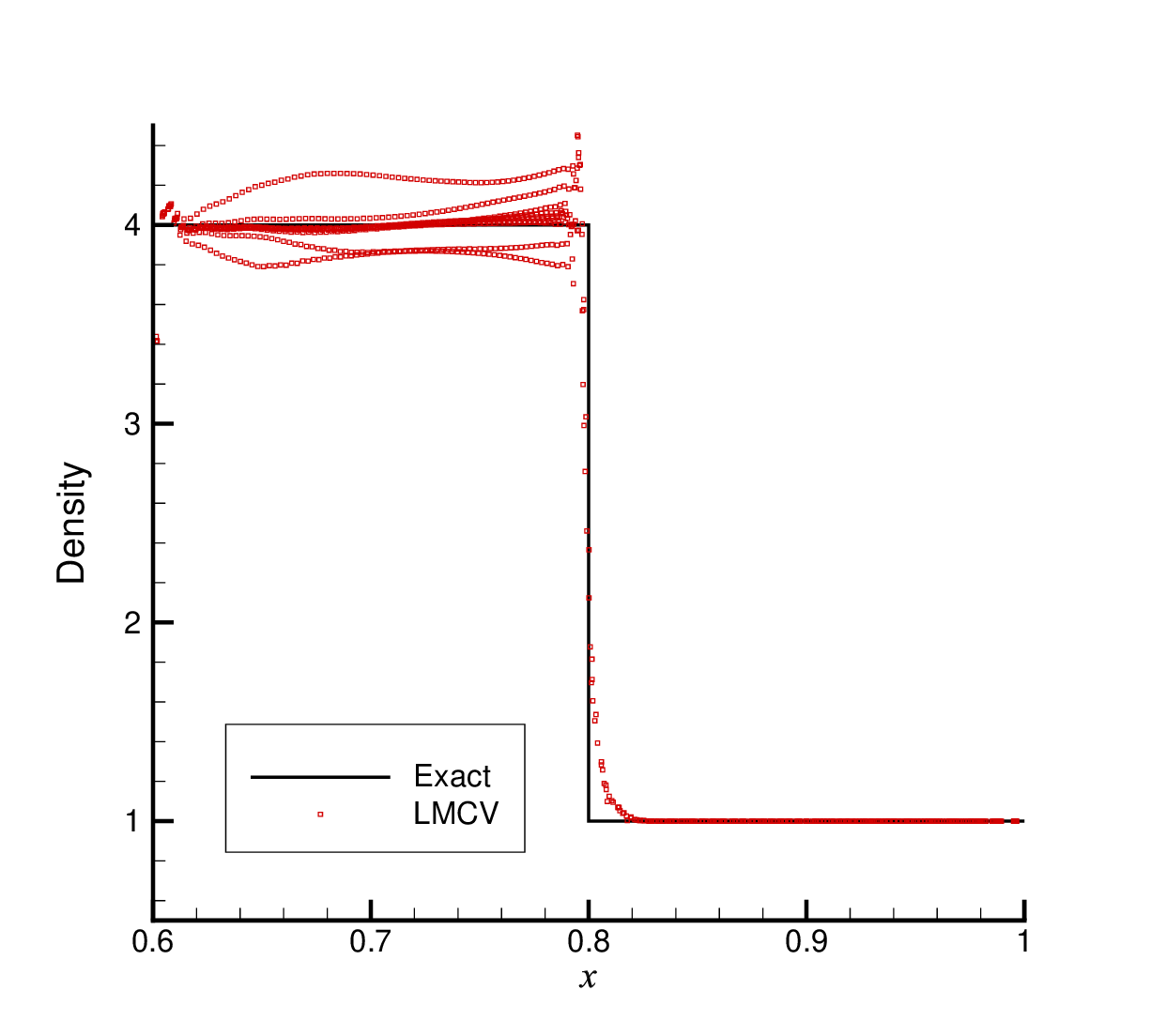}
		\caption{LMCV at $t=0.6$}
	\end{subfigure}
	\begin{subfigure}{0.43\linewidth}
		\centering
		\includegraphics[width=\textwidth]{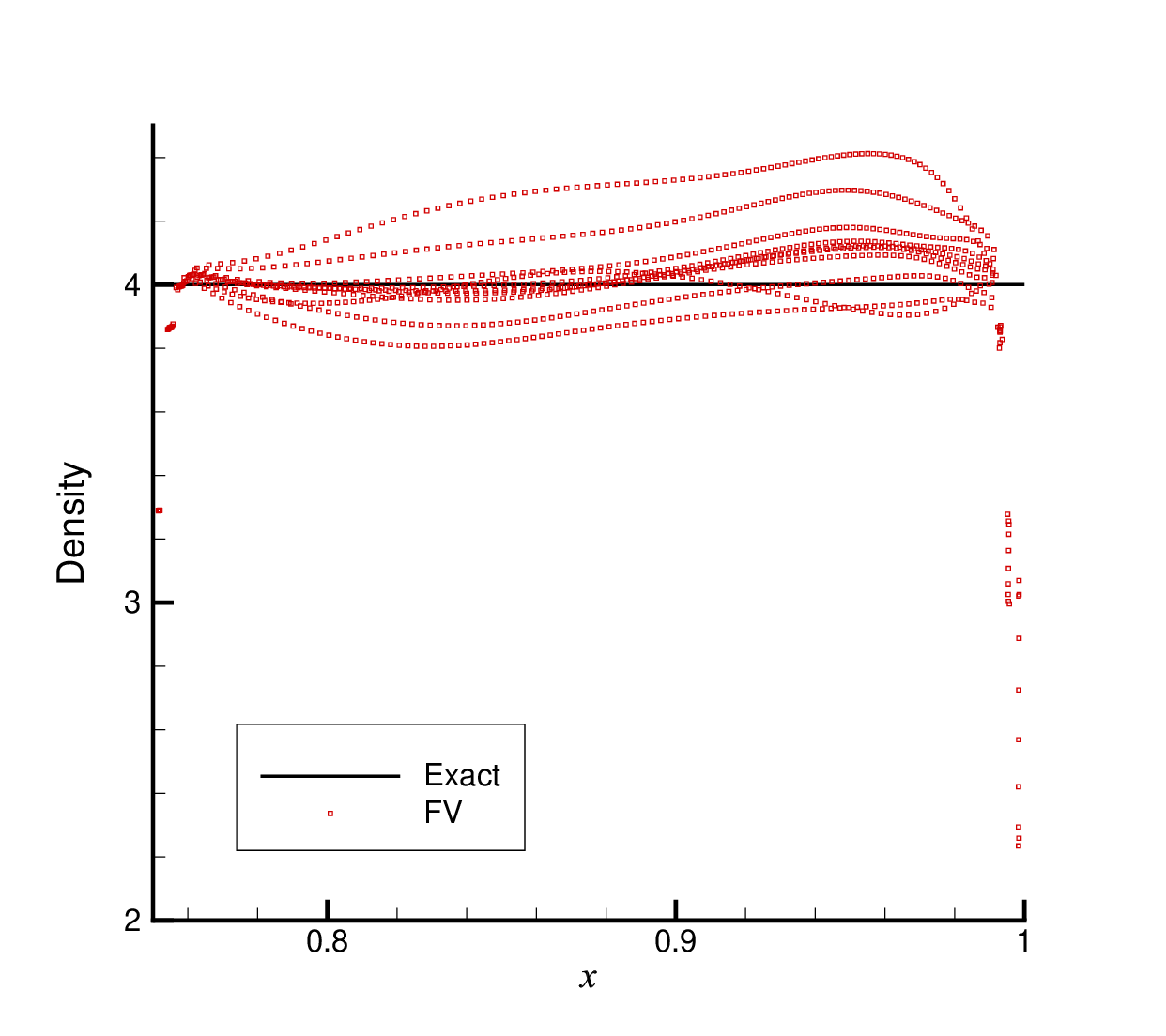}
		\caption{FV at $t=0.75$}
	\end{subfigure}
	\begin{subfigure}{0.43\linewidth}
		\centering
		\includegraphics[width=\textwidth]{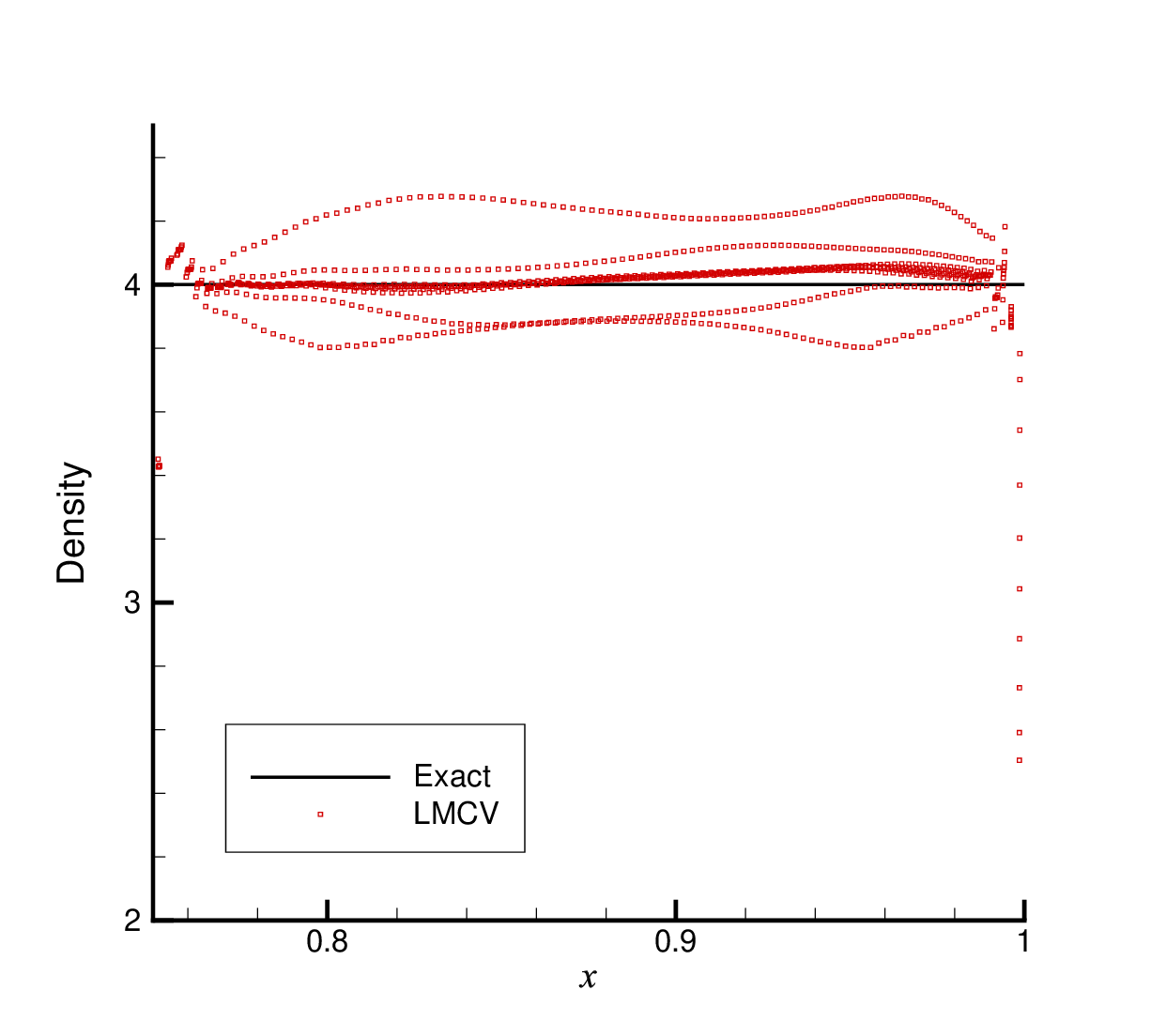}
		\caption{LMCV at $t=0.75$}
	\end{subfigure}
	\begin{subfigure}{0.43\linewidth}
		\centering
		\includegraphics[width=\textwidth]{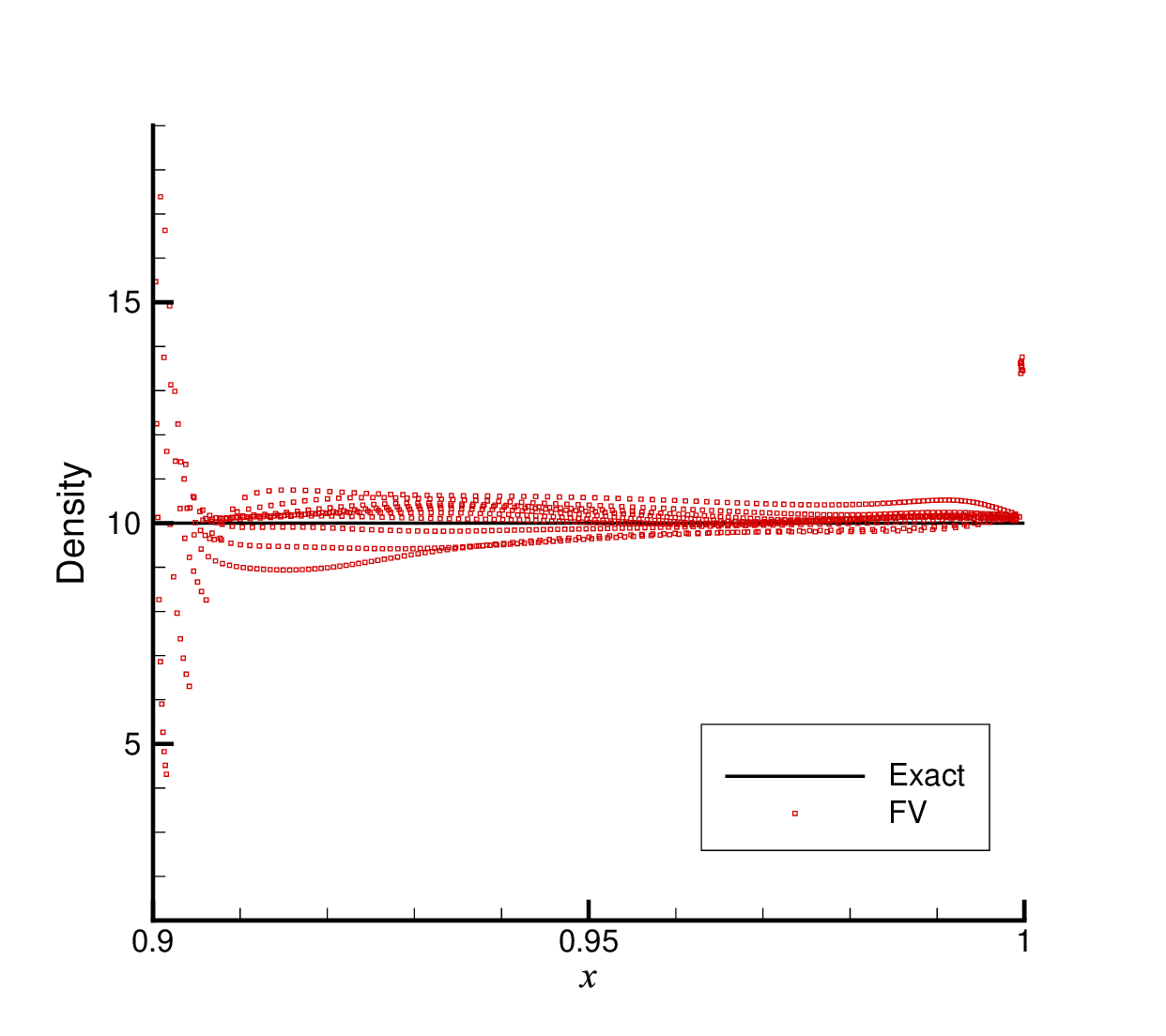}
		\caption{FV at $t=0.9$}
	\end{subfigure}
	\begin{subfigure}{0.43\linewidth}
		\centering
		\includegraphics[width=\textwidth]{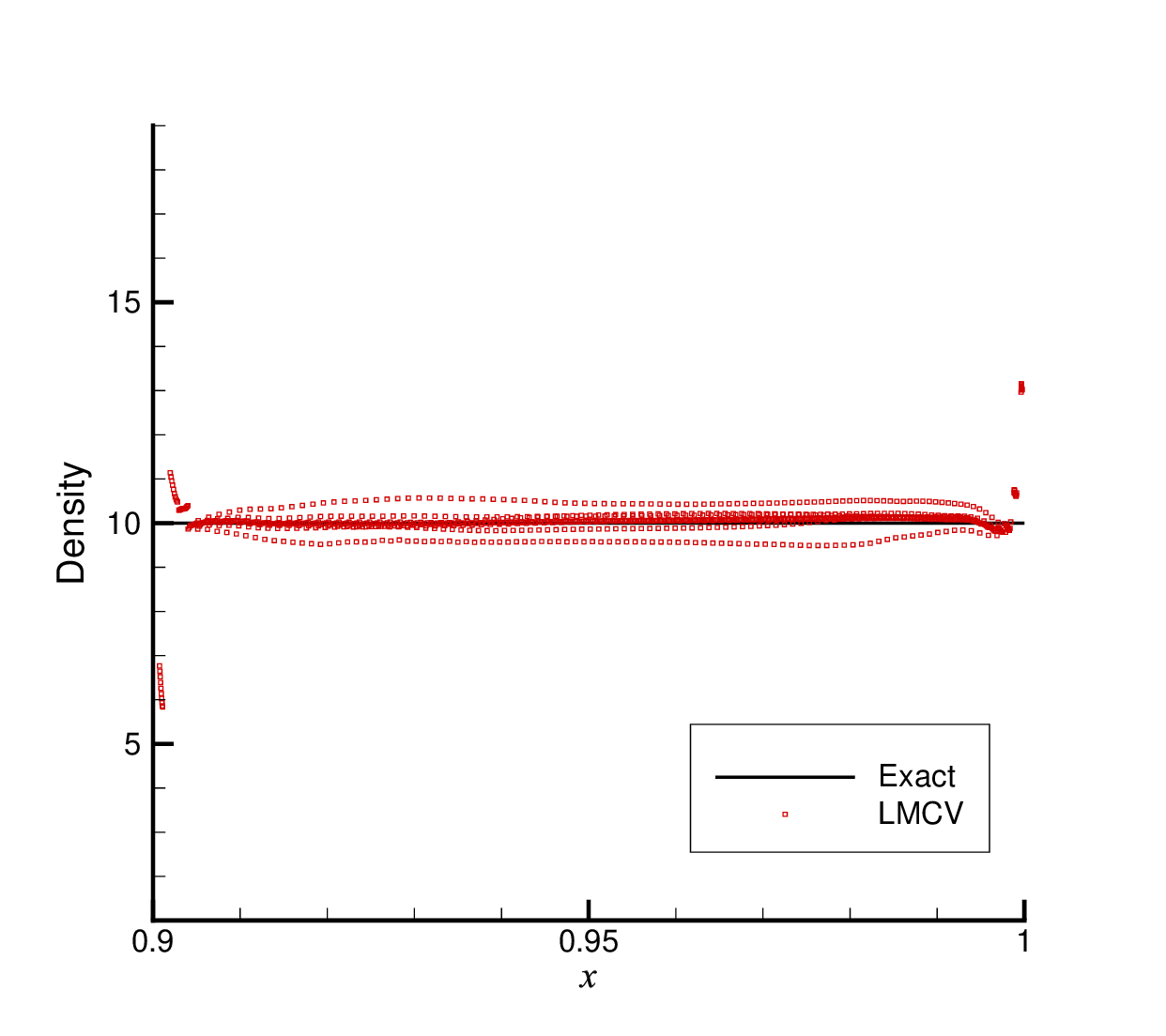}
		\caption{LMCV at $t=0.9$}
	\end{subfigure}
	\caption{The scatter plots of density for Saltzman problem at different time.}
	\label{Fig.Saltzman-x}
\end{figure}

\subsection{Triple point problem}
Finally we consider a three-state 2D Riemann problem, i.e. triple point problem \cite{Loubere2005TestCase}. The computational domain $\Omega = [0,7]\times[0,3]$ is surrounded by rigid walls and split into three regions, and the regions and states is initialized as
\begin{eqnarray*}
\Omega^1 = [0,1]\times[0,3]: & \quad \rho=1, \bV = \boldsymbol{0}, P =1,\\
\Omega^2 = [1,7]\times[0,1.5]: & \quad \rho=1, \bV = \boldsymbol{0}, P =0.1,\\
\Omega^3 = [1,7]\times[1.5,3]: & \quad \rho=0.1, \bV = \boldsymbol{0}, P =0.1,
\end{eqnarray*}
with $\gamma=1.4$.
This problem is designed to access the robustness of a Lagrangian scheme facing significant vorticity. The high pressure from left region $\Omega^1$ drives a shock through the right $\Omega^2 \cup \Omega^3$ and generates a vortex at the triple point where three regions connect. $\Omega$ is discretized by a uniform mesh of $70 \times 30$ size. To deal with the interaction between shocks and the high distortion of cells, not only is the limiting procedure used in this test case with $S_c = 5\times 10^5$ , but also the acoustic impedance is calculated in a more robust manner by Dukowicz solver \cite{Maire-Staggered} as
\begin{eqnarray*}
\alpha_{pt} = \rho_{pt}\brb{c_{pt} + \beta \frac{\gamma+1}{2} (\tbV_e - \bV_{pt}) \cdot \bN_e},
\end{eqnarray*}
where $pt$ denotes a point on any half surface $e$.
User-defined coefficient $\beta$ is set equal to $\beta = 6$ in order to acquire smooth mesh movement and reduce mesh self-intersection.
Solutions at time $t=3$ given by EUCCLHYD and LMCV are shown in Fig. \ref{Fig.triple-rho}. It is evident that greater amount of vorticity is generated for LMCV in the absence of significant overlapping cells, thereby demonstrating the accuracy and robustness of our high order scheme.




\begin{figure}[!htbp]
	\centering  
	\begin{subfigure}{\linewidth}
		\centering
		\includegraphics[width=\textwidth]{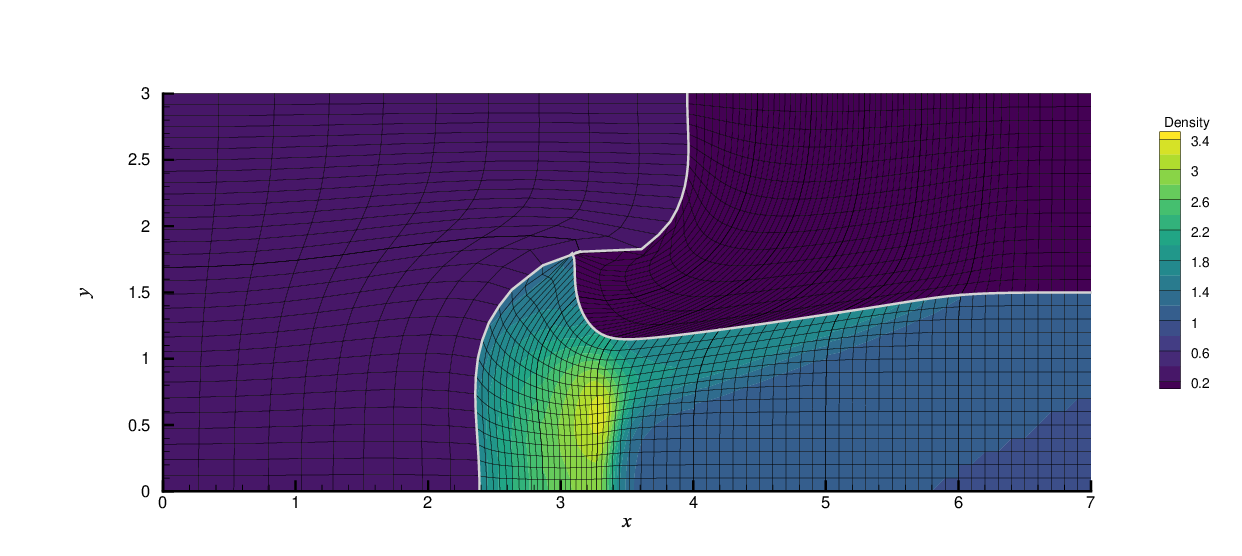}
	\end{subfigure}
	\begin{subfigure}{\linewidth}
		\centering
		\includegraphics[width=\textwidth]{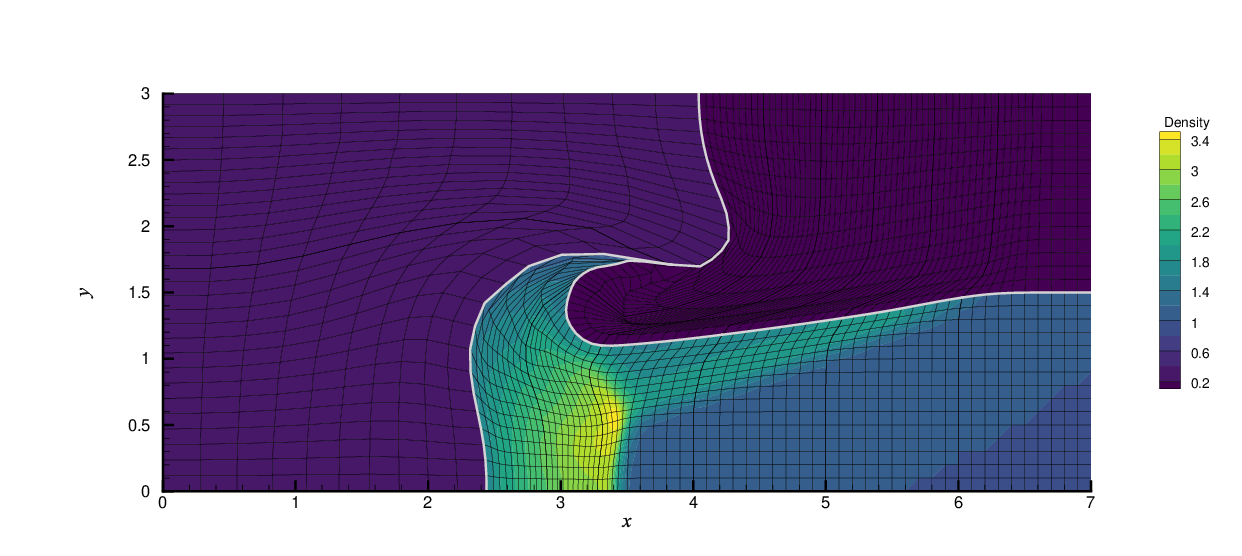}
	\end{subfigure}
	\caption{The density contours with mesh given by EUCCLHYD (left) and LMCV (right) for the triple point problem at time $t=3$, inner boundaries between three regions $\Omega$ are highlighted. }
	\label{Fig.triple-rho}
\end{figure}

\section{Conclusion} \label{chap:Lag:Conclusion}
We presented a novel third-order cell-centered Lagrangian scheme for hydrodynamics called LMCV on quadrilateral meshes using an augmented fourth-order 2D nodal solver. In our method,
the physical field is discretized into moments of two kind (i.e. VIA and PV) and approximated by polynomial reconstruction on reference cells. Based on augmented jump condition and balance condition which take the physical field on cell boundaries into full consideration, our new 2D nodal solver is capable to deliver conservative and accurate fluxes and consistent nodal velocities, lays a solid foundation for the robustness and high accuracy of our scheme.

Further more, a set of challenging test cases are calculated to verify the capacity of LMCV. Firstly, three various smooth tests are performed, the results confirm the three-order accuracy of LMCV across various flow fields and initial mesh configurations. Secondly, four flows with strong shocks are used to demonstrate the robustness of our scheme without losing accuracy facing discontinuities. To the best of our knowledge, LMCV is the first cell-centered Lagrangian scheme which achieves third-order accuracy on quadrilateral meshes without introducing curved edges.

\section*{Acknowledgements}

\section*{Appendix \hypertarget{Apd.2rd}{A}. Proof of specific volume accuracy}
This appendix primarily illustrates the specific volume error resulting from straight edges on quadrilateral mesh. 

For a set of uniformly refined quadrilateral mesh $\brc{M_h}_{h\in\mathbb{R}}$ and a smooth field $\U$ defined on physical domain, we assume that every vertices in $M_h$ flow EXACTLY according the velocity field $\bV$ given by $\U$, that is $\tbV^i_r = \bV(\bx^i_r) = \bV^i_r$. Consider a quadrilateral cell $\omega^i$, let 
$$(\tau^i_{\mathrm{ref}})_t = \frac{1}{m_i}\int_{\partial\omega^i}\bV\cdot\bN\dd l,$$
be the exact change rate of specific volume according to Eqn. \eqref{lag^int}, while the actual change rate of specific volume is 
$$
(\tau^i)_t = \frac{1}{m_i}
\sum_{r=1}^{4} L^i_{r,r+1}\bra{\hf\bV^i_r+\hf\bV^i_{r+1}} \cdot \bN^i_{r,r+1},
$$
since the edge stay straight. Our goal is to prove $$(\tau^i_{\mathrm{ref}})_t - (\tau^i)_t = \cO(h^2).$$ 
To be more specific, mesh $M_h$ in $\brc{M_h}_{h\in\mathbb{R}}$ is defined by its nodes $\bx_{k,l}$ with $k,l\in \mathbb{Z}$, and we assumed that the refinement follows $\bx_{k,l} = \bphi(kh,lh)$ with  $\bphi:\mathbb{R}^2\to\mathbb{R}^2$ is a smooth homeomorphism. 

Firstly, the error at each edge $e=\overline{\bx^i_r\bx^i_{r+1}}$ can be quantified as follows. Let $u(\bx) = \bV(\bx)\cdot \bN^i_{r,r+1}$, $u^i_r=u(\bx^i_r)$ and $u^i_{r+1} = u(\bx^i_{r+1})$, then the error is computed based on trapezoidal rule as  
\begin{eqnarray*}
    & &\int_{e} \bV \cdot \bN\dd l - L^i_{r,r+1}\bra{\hf\bV^i_r+\hf\bV^i_{r+1}} \cdot \bN^i_{r,r+1} \\
    &=& \int_{e} u \dd l - L^i_{r,r+1}\bra{\hf u^i_r +\hf u^i_{r+1}} \\
    &=& -\frac{1}{12}(L^i_{r,r+1})^3\brb{\bT^\top(\nabla^2 u)(\bx^i_{r+\hf})\bT} + \cO\bra{(L^i_{r,r+1})^5}, 
\end{eqnarray*}
where $\bT = \mathbb{T} \bN^i_{r,r+1}$ is the tangent vector with $\mathbb{T}=\begin{bmatrix}
    0 & -1 \\
    1 & 0
\end{bmatrix}$.
For simplicity, we define function $W(\bx,\bN):\mathbb{R}^2\times\mathbb{R}^2 \to \mathbb{R}$ as 
$$
W(\bx,\bN) = -\frac{1}{12}(\mathbb{T}\bN)^\top (\nabla^2 \bV\cdot\bN)(\bx)(\mathbb{T}n),
$$
then we have 
\begin{eqnarray} \label{error_edge}
    \int_{\overline{\bx^i_r\bx^i_{r+1}}} \bV \cdot \bN\dd l - L^i_{r,r+1}\bra{\hf\bV^i_r+\hf\bV^i_{r+1}} \cdot \bN^i_{r,r+1} = (L^i_{r,r+1})^3 W(\bx^i_{r+\hf},\bN^i_{r,r+1}) + \cO\bra{(L^i_{r,r+1})^5}.
\end{eqnarray} 
It should be noticed that $W(\bx,\bN)$ is a smooth function as a result of the smoothness of $\bV$.

In order to fully utilize the advantages of quadrilateral meshes, a new set of notation is introduced for this appendix. Let $\bx_{k+\hf,l}, \bx_{k,l+\hf}$ be the middle points of edge $\overline{\bx_{k,l}\bx_{k+1,l}}$ and $\overline{\bx_{k,l}\bx_{k,l+1}}$. For any smooth physical field $\varphi$, $\varphi_{k,l}$, $\varphi_{k+\hf,l}$ and $\varphi_{k,l+\hf}$ are the field $\varphi$ at point $\bx_{k,l}$, $\bx_{k+\hf,l}$ and $\bx_{k,l+\hf}$ separately. The unit normal vector and length of $\overline{\bx_{k,l}\bx_{k+1,l}}$ is denoted as $\bN_{k+\hf,l}$ and $L_{k+\hf,l}$. $\bN_{k,l+\hf}$ and $L_{k,l+\hf}$ to $\overline{\bx_{k,l}\bx_{k,l+1}}$ is defined similarly (see Fig. \ref{fig_Mij}). 

{\color{black}
\begin{figure}[!htp]
\centering
\begin{tikzpicture}[scale=1]
\coordinate (M0) at (0,0);
\coordinate (Mi0) at (-3.5,0.6);
\coordinate (Mi1) at (3,-1);
\coordinate (Mj0) at (0.5,-3);
\coordinate (Mj1) at (-1,2.6);

\fill (M0) circle[radius=0.05];
\fill (Mi0) circle[radius=0.05];
\fill (Mi1) circle[radius=0.05];
\fill (Mj0) circle[radius=0.05];
\fill (Mj1) circle[radius=0.05];
\draw (Mi0) node[left] {$\bx_{k-1,l}$}
--(M0) node[below left] {$\bx_{k,l}$}
--(Mi1) node[right] {$\bx_{k+1,l}$};
\draw (Mj0) node[below] {$\bx_{k,l-1}$}
--(M0)
--(Mj1) node[left] {$\bx_{k,l+1}$};

\coordinate (V0) at (0.3,0.5);
\coordinate (V1) at (0.23,0.69);
\coordinate (V2) at (0.1,0.7);
\draw[-{Latex}] (M0) -- ++(V0)
node[font=\small, above] {$\bV_{k,l}$};
\draw[-{Latex}] ($(M0)!0.5!(Mi1)$) -- ++(V1)
node[font=\small, above] {$\bV_{k+\hf,l}$};
\draw[-{Latex}] (Mi1) -- ++(V2)
node[font=\small, above] {$\bV_{k+1,l}$};
\fill ($(M0)!0.5!(Mi1)$) circle[radius=0.04];

\coordinate (dNj0) at ($(M0)!0.1cm!-90:(Mj0)-(M0)$);
\draw[{Latex[width=0.08cm]}-{Latex[width=0.08cm]}]
($(M0)!0.03!(Mj0) + (dNj0)$)
-- ($(M0)!0.97!(Mj0) + (dNj0)$);
\node[left] at ($(M0)!0.5!(Mj0)$) {$L_{k,l-\hf}$};
\coordinate (dNi1) at ($(M0)!0.1cm!-90:(Mi1)-(M0)$);
\draw[{Latex[width=0.08cm]}-{Latex[width=0.08cm]}]
($(M0)!0.03!(Mi1) + (dNi1)$)
-- ($(M0)!0.99!(Mi1) + (dNi1)$);
\node[below] at ($(M0)!0.5!(Mi1)$) {$L_{k+\hf,l}$};

\coordinate (Ni0) at ($(M0)!1cm!-90:(Mi0)-(M0)$);
\coordinate (Nj1) at ($(M0)!1cm!-90:(Mj1)-(M0)$);
\draw[-{Latex}] ($(M0)!0.5!(Mi0)$) -- ++ (Ni0)
node[left] {$\bN_{k-\hf,l}$};
\draw[-{Latex}] ($(M0)!0.7!(Mj1)$) -- ++ (Nj1)
node[right] {$\bN_{k,l+\hf}$};

\end{tikzpicture}
\caption{New notations around $\bx_{k,l}$}\label{fig_Mij}
\end{figure}
}

\begin{lemma}[Geometric properties of $M_h$]\label{lem:Mh_geo}
For a set of uniformly refined quadrilateral mesh $\brc{M_h}_{h\in\mathbb{R}}$ defined above, we have 
\begin{enumerate}
\item $L_{k+\hf,l},\; L_{k,l+\hf}=\cO(h)$,
\item $|L_{k+\hf,l+1} - L_{k+\hf,l}|,\; |L_{k+1,l+\hf}-L_{k,l+\hf}| =  \cO(h^2)$,
\item $|\bN_{k+\hf,l+1}-\bN_{k+\hf,l}|,\; |\bN_{k+1,l+\hf}-\bN_{k,l+\hf}|=\cO(h)$,
\end{enumerate}
\end{lemma}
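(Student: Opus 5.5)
Throughout, the mesh is $\bx_{k,l} = \bphi(kh,lh)$ with $\bphi$ a smooth homeomorphism. We work on a compact region where $\bphi$ and its derivatives up to order two are bounded. We also assume $\bphi$ is a diffeomorphism, so that $D\bphi$ is invertible with $|D\bphi^{-1}|$ bounded; this nondegeneracy is what keeps edges from collapsing. The whole argument rests on Taylor expansions of $\bphi$ about grid points.

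Write $\mathbf{a}_{k,l} = \bphi((k+1)h,lh) - \bphi(kh,lh)$ for the edge vector of $\overline{\bx_{k,l}\bx_{k+1,l}}$. Taylor expansion gives
\begin{eqnarray*}
\mathbf{a}_{k,l} = h\,\partial_1\bphi(kh,lh) + \cO(h^2).
\end{eqnarray*}
Consequently $L_{k+\hf,l} = |\mathbf{a}_{k,l}| = h|\partial_1\bphi| + \cO(h^2) = \cO(h)$. The $l$-direction edges are handled the same way with $\partial_2\bphi$. This proves item 1.

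For item 2, I will compare adjacent parallel edges. The cleanest route is the mean value theorem applied to the function $g(s) = \bphi((k+1)h,s) - \bphi(kh,s)$. Its derivative is
\begin{eqnarray*}
g'(s) = \partial_2\bphi((k+1)h,s) - \partial_2\bphi(kh,s) = \cO(h),
\end{eqnarray*}
because $\partial_2\bphi$ is Lipschitz with constant bounded by $\sup|D^2\bphi|$. It follows that
\begin{eqnarray*}
|\mathbf{a}_{k,l+1} - \mathbf{a}_{k,l}| = |g((l+1)h) - g(lh)| \le h\sup|g'| = \cO(h^2).
\end{eqnarray*}
The reverse triangle inequality then gives $|L_{k+\hf,l+1} - L_{k+\hf,l}| \le |\mathbf{a}_{k,l+1} - \mathbf{a}_{k,l}| = \cO(h^2)$. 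The other direction is symmetric.

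For item 3, the normal is $\bN = -\mathbb{T}\mathbf{a}/|\mathbf{a}|$, up to the orientation convention. It remains to control the unit-vector map $\mathbf{a}\mapsto\mathbf{a}/|\mathbf{a}|$, and this is the one place needing care: we need a uniform lower bound $|\mathbf{a}_{k,l}| \ge c h$. This comes from the nondegeneracy assumption, since $|\partial_1\bphi| \ge c_0 > 0$. For nonzero vectors the elementary inequality
\begin{eqnarray*}
\left|\frac{\mathbf{a}}{|\mathbf{a}|} - \frac{\mathbf{b}}{|\mathbf{b}|}\right| \le \frac{2|\mathbf{a}-\mathbf{b}|}{\max(|\mathbf{a}|,|\mathbf{b}|)}
\end{eqnarray*}
holds. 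Applying it with the $\cO(h^2)$ difference from item 2 and the $ch$ lower bound from item 1 gives $\cO(h^2)/(ch) = \cO(h)$, which is item 3. In fact this argument yields $\cO(h)$ cleanly, which is the claimed rate.

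The main obstacle is simply making the nondegeneracy assumption explicit: that $\bphi$ is a diffeomorphism with uniformly bounded derivatives on the region of interest. Everything else is routine.
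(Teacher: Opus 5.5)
Your proposal is correct and follows the same route as the paper's proof. Both arguments bound the edge length by a first difference of $\bphi$, bound the length variation by the mixed second difference $\bx_{k+1,l+1}-\bx_{k,l+1}-\bx_{k+1,l}+\bx_{k,l}=\cO(h^2)$ together with the reverse triangle inequality, and bound the normal variation by comparing the normalized edge vectors. Your version adds one thing: you make explicit the uniform lower bound $|\mathbf{a}_{k,l}|\ge ch$, which item 3 genuinely needs and the paper uses without comment. That bound requires $\bphi$ to be a diffeomorphism, since the paper's assumption of a smooth homeomorphism alone does not guarantee it.
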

\begin{proof}[Proof of Lemma \ref{lem:Mh_geo}]
By definition of $M_h$, it can be easily checked for one dimension that
\begin{eqnarray*}
    L_{k+\hf,l} = |\bx_{k+1,l}-\bx_{k,l}| = |\bphi((k+1)h,lh)-\bphi(kh,lh)| = \cO(h), \\
    |L_{k+\hf,l+1} - L_{k+\hf,l}| \le |\bx_{k+1,l+1}-\bx_{k,l+1} -\bx_{k+1,l} + \bx_{k,l}| = \cO(h^2), \\ 
    |\bN_{k+\hf,l+1}-\bN_{k+\hf,l}| = \left|\frac{\bx_{k+1,l+1} - \bx_{k,l+1}}{L_{k+\hf,l+1}} - \frac{\bx_{k+1,l}-\bx_{k,l}}{L_{k+\hf,l}}\right| = \cO(h).
\end{eqnarray*}
Based on symmetry, the statement for other dimension is also true.  
\end{proof}

With the new notation, the error $(\tau^i_{\mathrm{ref}})_t - (\tau^i)_t$ can be formulated in detail. Without losing generality, consider cell $\omega^i$ with $\bx_{k,l},\bx_{k+1,l},\bx_{k,l+1},\bx_{k+1,l+1}$ as its vertices. Recall Eqn. \eqref{error_edge}, we have 
\begin{eqnarray*}
    & &(\tau^i_{\mathrm{ref}})_t - (\tau^i)_t \\
    &=&\frac{1}{m_i}\left[L_{k+\hf,l+1}^3W(\bx_{k+\hf,l+1}, \bN_{k+\hf,l+1}) - L_{k+\hf,l}^3W(\bx_{k+\hf,l},\bN_{k+\hf,l})\right. \\
    & &\left. + L_{k+1,l+\hf}^3W(\bx_{k+1,l+\hf}, \bN_{k+1,l+\hf}) - L_{k,l+\hf}^3W(\bx_{k,l+\hf},\bN_{k,l+\hf}) + \cO(h^5)\right].
\end{eqnarray*}
Noticing that $|(\bx_{k+\hf,l+1},\bN_{k+\hf,l+1})-(\bx_{k+\hf,l},\bN_{k+\hf,l})| = \cO(h)$ according to Lemma \ref{lem:Mh_geo}, which means 
$$W(\bx_{k+\hf,l+1}, \bN_{k+\hf,l+1}) -W(\bx_{k+\hf,l},\bN_{k+\hf,l}) = \cO(h).$$
As a result, we have  
$$L_{k+\hf,l+1}^3W(\bx_{k+\hf,l+1}, \bN_{k+\hf,l+1}) - L_{k+\hf,l}^3W(\bx_{k+\hf,l},\bN_{k+\hf,l}) = \cO(h^4).$$ 
Similarly, we have the same analysis for the other direction. 
With $m_i =\int_{\omega^i}\rho\dd\omega = \cO(h^2)$,  we finally get 
$$
(\tau^i_{\mathrm{ref}})_t - (\tau^i)_t = \cO(h^2).
$$

\section*{Appendix \hypertarget{Apd.4th}{B}. Proof of flux accuracy}
This appendix primarily illustrates the relationship between flux accuracy and the movement of the grid. 

For a set of uniformly refined quadrilateral mesh $\brc{M_h}_{h\in\mathbb{R}}$ and a smooth field $\mathbf{U}$, Simpson's rule provides a 4th-order accurate approximation for the Lagrangian flux across the edge $[r, r+1]$ of cell $\omega^i$:
\begin{eqnarray} \label{eq:reference_flux}
\bF_S = \begin{bmatrix}
-\bra{\ah \bVi_{r} + \fh \bVi_{r+\hf} + \ah \bVi_{r+1}}\cdot \bN^i_{r,r+1} \\
\bra{\ah P^i_{r} + \fh P^i_{r+\hf} + \ah P^i_{r+1}}\bN^i_{r,r+1}           \\
\bra{\ah P^i_{r}\bVi_{r} + \fh P^i_{r+\hf}\bVi_{r+\hf} + \ah P^i_{r+1}\bVi_{r+1}}\cdot \bN^i_{r,r+1}
\end{bmatrix}
\end{eqnarray}
Based on the augmented nodal solver, the Lagrangian flux across the edge $[r, r+1]$ of cell $\omega^i$ is:
\begin{eqnarray}\label{eq:numerical_flux}
\bF_M = \begin{bmatrix}
-\hf \bra{\tbVi_{r}+\tbVi_{r+1}} \cdot \bN^i_{r,r+1} \\
\hf \bra{\tPsi_{r,r+\hf} + \tPsi_{r+\hf,r+1}}
\bN^i_{r,r+1}                                        \\
\hf \bra{\tPsi_{r,r+\hf}\tbVi_{r}
+ \tPsi_{r+\hf,r+1}\tbVi_{r+1}}\cdot \bN^i_{r,r+1}
\end{bmatrix}.
\end{eqnarray}
Our goal is to prove 
$$\mathbf{F}_M - \mathbf{F}_S = \mathcal{O}(h^4).$$
To be more specific, mesh $M_h$ in $\brc{M_h}_{h\in\mathbb{R}}$ is defined by its nodes $\bx_{k,l}$ with $k,l\in \mathbb{Z}$, and we assumed that the refinement follows $\bx_{k,l} = \bphi(kh,lh)$ with  $\bphi:\mathbb{R}^2\to\mathbb{R}^2$ is a smooth homeomorphism. 

The key step to prove is to explore the properties of $\delta\tbV$ as below. 

\begin{lemma}[Properties of $\delta \widetilde{\mathbf{v}}= \widetilde{\mathbf{v}}- \mathbf{v}$] \label{lem:deltaV_properties}
For a set of uniformly refined quadrilateral mesh $\brc{M_h}_{h\in\mathbb{R}}$ and a smooth field $\mathbf{U}$, the nodal velocity correction $\delta \tbV^i_r$ satisfies:
\begin{enumerate}
\item $\delta \tbV^i_r = \mathcal{O}(h^2)$.
\item $\delta \tbV^i_{r+1} - \delta \tbV^i_r = \mathcal{O}(h^3)$.
\item $\bN^i_{r,r+1} \cdot \bra{ \delta \tbV^i_r + \delta \tbV^i_{r+1} } = 2 w^i_{r+\hf} + \mathcal{O}(h^4)$.
\end{enumerate}
\end{lemma}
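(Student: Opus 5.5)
The approach is to linearize the least-squares system \eqref{maire_vel_hoo} around the exact field. For a smooth field the reconstructed states agree on both sides of every half surface $e=\overline{\bx^i_r\bx^i_{r+\hf}}$ up to reconstruction error: $\bV^i_r=\bV^j_k$, $P^i_r=P^j_k$, $\alpha^i_r=\alpha^j_k$, and likewise at the midpoint. I assume the reconstruction is accurate enough that these errors only enter at the orders claimed; I would state this as a standing assumption and not prove it.

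\textbf{Step 1: the half-surface normal velocity.} Since $\bN^j_{k-1,k}=-\bN_e$, the two corrections satisfy $w^j_{k-\hf}=-w^i_{r+\hf}$. Hence $\alpha_{e+}=\alpha_{e-}$ and $P_{e+}-P_{e-}=2\alpha^i_{r+\hf}w^i_{r+\hf}$, so \eqref{normal_vel_e} becomes
\begin{eqnarray*}
\tcVs_e=\bV(\bx^i_r)\cdot\bN_e+\frac{\alpha^i_{r+\hf}}{\alpha_{e+}}\,w^i_{r+\hf}.
\end{eqnarray*}
The ratio is $1+\cO(h)$. Combined with the next display this gives $\tcVs_e=\bV(\bx^i_r)\cdot\bN_e+w_f+\cO(h^3)$, where $w_f$ is the value of $w$ on the full edge $f\supset e$.

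A Taylor expansion along the straight edge, done as in Appendix A, gives
\begin{eqnarray*}
w_f=\fh\bra{\bV_{r+\hf}-\hf\bV_r-\hf\bV_{r+1}}\cdot\bN=L_f^2\,W(\bx_{f},\bN_f)+\cO(h^4),
\end{eqnarray*}
where $\bx_f$ is the midpoint of $f$ and $W$ is the smooth function of Appendix A. This is odd in $\bN$, and has no $\cO(h^3)$ term because we expand about the edge midpoint.

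Substituting into \eqref{maire_vel_hoo}, the correction $\delta\tbV^i_r$ solves $\mathbb{A}\,\delta\tbV^i_r=\mathbf{b}$, with
\begin{eqnarray*}
\mathbb{A}=\sum_{e\in\cE(\bx^i_r)}L_e(\alpha_{e+}+\alpha_{e-})\bN_e\bN_e^\top,\qquad
\mathbf{b}=\sum_{e\in\cE(\bx^i_r)}L_e(\alpha_{e+}+\alpha_{e-})\,w_e\,\bN_e .
\end{eqnarray*}

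\textbf{Step 2: item 1.} For a mesh built from a smooth homeomorphism $\bphi$, the four half-edge normals at a node span two directions that are uniformly non-parallel. So $\mathbb{A}$ is $\cO(h)$ and uniformly invertible after scaling by $h^{-1}$. Since $\mathbf{b}=\cO(h\cdot h^2)$, we get $\delta\tbV^i_r=\cO(h^2)$.

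\textbf{Step 3: item 2.} After dividing by $h^2$, $\mathbb{A}/h$ and $\mathbf{b}/h^3$ are smooth functions of the node position and the local edge vectors, which are $h$-scaled differences of $\bphi$. Lemma \ref{lem:Mh_geo} shows these data change by $\cO(h)$ between adjacent nodes. Hence $h^{-2}\delta\tbV$ changes by $\cO(h)$, which gives item 2.

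\textbf{Step 4: item 3, the main obstacle.} At $\bx_{k,l}$ the two half-edges in the $k$-direction, $\overline{\bx_{k,l}\bx_{k\pm\hf,l}}$, have normals and $w$-values that agree (with the sign flip of $\bN$ built in) up to a one-sided first-order Taylor term $\pm\hf h\,\partial_k(\cdot)$. The same holds in the $l$-direction. Therefore the weighted least-squares problem is, to leading order, the consistent $2\times2$ system $\delta\tbV\cdot\bN_k=w_k$, $\delta\tbV\cdot\bN_l=w_l$, evaluated at the node. This yields $\delta\tbV^i_r\cdot\bN^i_{r,r+1}=w^i_{r+\hf}+\mathbf{c}\,h^3+\cO(h^4)$.

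The plan is to show that the $h^3$ coefficient is, to leading order, an odd function of the displacement from the edge midpoint $\bx^i_{r+\hf}$ to the node. I would do this by writing everything as a Taylor expansion about $\bx^i_{r+\hf}$. The nodes $\bx^i_r$ and $\bx^i_{r+1}$ sit at $\mp\hf$ along the edge, and the stencils around them are mirror images up to $\cO(h)$ corrections. The hope is that the $\cO(h^3)$ residuals at $\bx^i_r$ and $\bx^i_{r+1}$ cancel in the sum, leaving $2w^i_{r+\hf}+\cO(h^4)$.

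I expect the delicate point to be checking this antisymmetry carefully: the cross-edge contributions from the transverse edges must also have opposite leading $h^3$ terms at the two ends. If a full check is too heavy, I would first verify it on an affine mesh, where $\bphi$ is linear and the cancellation is exact. I would then argue that the curvature of $\bphi$ only perturbs it at $\cO(h^4)$.
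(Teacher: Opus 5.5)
Items 1 and 2 are fine and match the paper in substance. Your scaling argument for item 2 ($\mathbb{A}/h$ uniformly invertible, with $\mathbb{A}/h$ and $\mathbf{b}/h^3$ varying by $\cO(h)$ between neighbouring nodes) is more explicit than the paper's appeal to smoothness.

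\textbf{The gap is item 3}, which is the real content of the lemma. Step 4 states what must be true but gives no mechanism. Knowing only that the nodal system is ``to leading order'' $\delta\tbV\cdot\bN_k=w_k$, $\delta\tbV\cdot\bN_l=w_l$ gives $\bN^i_{r,r+1}\cdot\delta\tbV^i_r=w^i_{r+\hf}+\cO(h^3)$. Everything then rests on the $h^3$ terms cancelling between $\bx^i_r$ and $\bx^i_{r+1}$. That antisymmetry does hold, but your fallback cannot prove it.

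On a curved mesh the grid line through a node bends. So $\bN^i_{r,r+1}$ is rotated by an angle $\theta=\cO(h)$ from the bisector of the two collinear half-edge normals at the node. This alone shifts $\bN^i_{r,r+1}\cdot\delta\tbV^i_r$ by $\theta\,\bT\cdot\delta\tbV^i_r=\cO(h^3)$ at each node. A perturbation argument off the affine case therefore yields only $\cO(h^3)$, unless you prove the very cancellation at issue.

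The mirror-image heuristic is also off. Point reflection through $\bx^i_{r+\hf}$ reverses the bending, which is even in the edge direction. The two stars are approximately translates, not reflections. The cancellation comes from the edge lying on opposite sides of the two stars.

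Separately, for item 3 you may not use the Step 1 simplification $\tcVs_e=\bV\cdot\bN_e+w_f+\cO(h^3)$. An uncancelled $\cO(h^3)$ error in $\tcVs_e$ already destroys the claim. The $\cO(h)$ part of $\alpha^i_{r+\hf}/\alpha_{e+}$ only drops out after pairing the two collinear half-edges at a node.

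\textbf{The missing idea}, which is how the paper argues, is to decouple the nodal system along the bisectors $\bN^0_{k,l}\propto\bN_{k+\hf,l}+\bN_{k-\hf,l}$ and $\bN^1_{k,l}\propto\bN_{k,l+\hf}+\bN_{k,l-\hf}$.

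Step one: the two collinear normals are exactly symmetric about $\bN^0_{k,l}$. Lengths, impedances and $w$'s on the two sides agree up to first-order terms of opposite sign. Hence $\mathbb{M}_{k,l}=\alpha_{k,l}\{L_{kk}[\bN^0_{k,l}(\bN^0_{k,l})^\top+\cO(h^2)]+L_{ll}[\bN^1_{k,l}(\bN^1_{k,l})^\top+\cO(h^2)]\}$, and $\mathbf{b}$ splits the same way. This gives the fourth-order nodal identity $\bN^0_{k,l}\cdot\delta\tbV_{k,l}=\hf(w_{k-\hf,l}+w_{k+\hf,l})+\cO(h^4)$, not merely a leading-order one.

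Step two: add this identity at $\bx_{k,l}$ and $\bx_{k+1,l}$. The right side is $2w_{k+\hf,l}$ plus half a second difference of $w$. That second difference is $\cO(h^4)$ by your own expansion $w_f=L_f^2W(\bx_f,\bN_f)+\cO(h^4)$.

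Step three: on the left, $\bN^0_{k,l}$ and $\bN^0_{k+1,l}$ are $\bN_{k+\hf,l}$ rotated by opposite angles $\theta_k,\theta_{k+1}=\cO(h)$, with $\theta_{k+1}-\theta_k=\cO(h^2)$. So the left side equals $\bN_{k+\hf,l}\cdot(\delta\tbV_{k,l}+\delta\tbV_{k+1,l})+\theta_k\,\bT\cdot(\delta\tbV_{k,l}-\delta\tbV_{k+1,l})+\cO(h^4)$. Item 2 makes the middle term $\cO(h^4)$.

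Your plan lacks the fourth-order bisector identity and the use of item 2 inside the proof of item 3.
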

\begin{proof}[Proof of Lemma \ref{lem:deltaV_properties}]
In order to fully utilize the advantages of quadrilateral meshes, a new set of notation is introduced for the proof of this lemma. Let $\bx_{k+\hf,l}, \bx_{k,l+\hf}$ be the middle points of edge $\overline{\bx_{k,l}\bx_{k+1,l}}$ and $\overline{\bx_{k,l}\bx_{k,l+1}}$. For any smooth physical field $\varphi$, $\varphi_{k,l}$, $\varphi_{k+\hf,l}$ and $\varphi_{k,l+\hf}$ are the field $\varphi$ at point $\bx_{k,l}$, $\bx_{k+\hf,l}$ and $\bx_{k,l+\hf}$ separately. The unit normal vector and length of $\overline{\bx_{k,l}\bx_{k+1,l}}$ is denoted as $\bN_{k+\hf,l}$ and $L_{k+\hf,l}$. $\bN_{k,l+\hf}$ and $L_{k,l+\hf}$ to $\overline{\bx_{k,l}\bx_{k,l+1}}$ is defined similarly (see Fig. \ref{fig_Mij}). 

Subsequently, reconsider Eqn. \eqref{maire_vel_hoo} for node $\bx_{k,l}$ with the smoothness of fields, one can get
\begin{eqnarray} \label{maire_vel^ij}
\sum_{m \in \mathcal{N}(k,l)} L_m \left[ \left( \frac{1}{3}\alpha_{k,l} + \frac{2}{3}\alpha_m \right) (\delta\tbV_{k,l} \cdot \bN_m) - \alpha_m w_m \right] \bN_m = \boldsymbol{0},\quad \mathcal{N}(k,l) = \left\{(k\pm\tfrac{1}{2},l),\ (k,l\pm\tfrac{1}{2})\right\},
\end{eqnarray}
where $w_{k\pm\hf,l} \coloneqq \fh \bra{\bV_{k\pm\hf,l} - \hf \bV_{k,l} - \hf\bV_{k\pm 1,l}}\cdot\bN_{k\pm\hf,l}$ and $w_{k,l\pm\hf} \coloneqq \fh \bra{\bV_{k,l\pm\hf} - \hf \bV_{k,l} - \hf\bV_{k,l\pm 1}}\cdot\bN_{k,l\pm\hf}$. 
Rewriting \eqref{maire_vel^ij} into a linear system as
\begin{eqnarray*}
\mathbb{M}_{k,l} \delta \tbVs_{k,l}-\boldsymbol{b}_{k,l} = \boldsymbol{0},
\end{eqnarray*}
where
\begin{eqnarray*}
\left\{
\begin{aligned}
&\mathbb{M}_{k,l}
= \sum_{m \in \mathcal{N}(k,l)} L_m  \left( \frac{1}{3}\alpha_{k,l} + \frac{2}{3}\alpha_m \right)  \bN_m   \bN_m ^{\top} =\alpha_{k,l}\left\{
L_{kk}
\brb{\bN^{0}_{k,l}\bN^{0,\top}_{k,l} + \cO(h^2)}+
L_{ll}
\brb{\bN^{1}_{k,l}\bN^{1,\top}_{k,l} + \cO(h^2)}\right\},
\\
& \boldsymbol{b}_{k,l} =
\sum_{m \in \mathcal{N}(k,l)}L_{m}\alpha_{m}w_{m}\bN_{m}
= \frac{\alpha_{k,l}}{2}\left\{ 
L_{kk}
\bra{ w_{k+\hf,l}+  w_{k-\hf,l}}
\brb{\bN^{0}_{k,l}+\cO(h^2)}
+ L_{ll}
\bra{ w_{k,l+\hf}+  w_{k,l-\hf}}
\brb{\bN^{1}_{k,l}+\cO(h^2)}\right\},
\end{aligned}
\right.
\end{eqnarray*}
with
\begin{eqnarray*}
L_{kk}=L_{k+\hf,l}+L_{k-\hf,l},\quad
L_{ll}=L_{k,l+\hf}+L_{k,l-\hf},\quad
\bN^0_{k,l} = \frac{\bN_{k+\hf,l} + \bN_{k-\hf,l}}{|\bN_{k+\hf,l} + \bN_{k-\hf,l}|}, \quad
\bN^1_{k,l} = \frac{\bN_{k,l+\hf} + \bN_{k,l-\hf}}{|\bN_{k,l+\hf} + \bN_{k,l-\hf}|}.
\end{eqnarray*}
It can be observed that $\mathbb{M}_{k,l}$ and $\boldsymbol{b}_{k,l}$ consist two independent parts related to $\bN^0_{k,l}$ and $\bN^1_{k,l}$.
Comparing the two components separately, one can get
\begin{eqnarray} \label{dv_3}
\left\{
\begin{aligned}
& \bN_{k,l}^0\cdot \delta \tbVs_{k,l} = \hf\bra{w_{k-\hf,l}+w_{k+\hf,l}} \brb{1+ \mathcal{O}(h^2)}, \\
& \bN_{k,l}^1\cdot \delta \tbVs_{k,l} = \hf\bra{w_{k,l-\hf}+w_{k,l+\hf}} \brb{1+ \mathcal{O}(h^2)},
\end{aligned}
\right.
\end{eqnarray}

With this essential relations, three properties of $\delta\tbVs$ can be found.
Firstly, noticing that $w_{k+\hf,l}, w_{k,l+\hf} = \cO(h^2)$, it is obvious that
\begin{eqnarray*}
\delta\tbVs_{k,l} = \cO(h^2),
\end{eqnarray*}
Then, taking the smoothness into consideration,
\begin{eqnarray*}
\delta\tbVs_{k+1,l} - \delta\tbV_{k,l} = \cO(h^3),\quad
\delta\tbVs_{k,l+1} - \delta\tbV_{k,l} = \cO(h^3)
\end{eqnarray*}
can be readily verified.
At last, adding Eqs. \eqref{dv_3} from two adjacent nodes, $M_{k,l}$ and $M_{k+1,l}$ for instance, we get
\begin{eqnarray*}
\bN^0_{k,l} \cdot \delta \tbVs_{k,l} + \bN^0_{k+1,l} \cdot \delta \tbVs_{k+1,l} &=& \hf\bra{w_{k-\hf,l} + 2w_{k+\hf,l} + w_{k+\frac{3}{2},l}} \brb{1+ \mathcal{O}(h^2)} \\
&=& 2w_{k+\hf,l}\brb{1+ \mathcal{O}(h^2)}.
\end{eqnarray*}
Meanwhile, the left formula is
\begin{eqnarray*}
\bN^0_{k,l} \cdot \delta \tbVs_{k,l} + \bN^0_{k+1,l} \cdot \delta \tbVs_{k+1,l} = \bN_{k+\hf,l} \cdot \bra{\delta\tbV_{k,l} + \delta\tbV_{k+1,l}} \brb{1+ \mathcal{O}(h^2)}.
\end{eqnarray*}
The final property is obtained from the above two equations as
\begin{eqnarray*}
\bN_{k+\hf,l} \cdot \bra{\delta\tbVs_{k,l} + \delta\tbVs_{k+1,l}} = 2w_{k+\hf,l}\brb{1+ \mathcal{O}(h^2)} = 2w_{k+\hf,l} + \cO(h^4).
\end{eqnarray*}
Similar result can be acquired in another direction as
\begin{eqnarray*}
\bN_{k,l+\hf} \cdot \bra{\delta\tbV_{k,l} + \delta\tbV_{k,l+1}} = 2w_{k,l+\hf} + \cO(h^4).
\end{eqnarray*}

In summary, three proved properties of $\delta\tbVs$ are listed as
\begin{gather*}
\delta\tbVs_{k,l} = \cO(h^2), \\
\left\{
\begin{aligned}
\delta\tbVs_{k+1,l} - \delta\tbVs_{k,l} = \cO(h^3) \\
\delta\tbVs_{k,l+1} - \delta\tbVs_{k,l} = \cO(h^3)
\end{aligned}
\right. \, , \\
\left\{
\begin{aligned}
\bN_{k+\hf,l} \cdot \bra{\delta\tbVs_{k,l} + \delta\tbVs_{k+1,l}} = 2w_{k+\hf,l} + \cO(h^4) \\
\bN_{k,l+\hf} \cdot \bra{\delta\tbVs_{k,l} + \delta\tbVs_{k,l+1}} = 2w_{k,l+\hf} + \cO(h^4)
\end{aligned}
\right. \, .
\end{gather*}
which play an essential role in following lemma.
\end{proof}

\begin{lemma}[Flux accuracy from $\delta\tbV$]
For a set of uniformly refined mesh $\brc{M_h}_{h\in\mathbb{R}}$ and a smooth field $\mathbf{U}$, if the nodal velocity correction $\delta \tbV^i_r$ satisfies:
\begin{enumerate}
\item $\delta \tbV^i_r = \mathcal{O}(h^2)$,
\item $\delta \tbV^i_{r+1} - \delta \tbV^i_r = \mathcal{O}(h^3)$,
\item $\bN^i_{r,r+1} \cdot \bra{ \delta \tbV^i_r + \delta \tbV^i_{r+1} } = 2 w^i_{r+\hf} + \mathcal{O}(h^4)$,
\end{enumerate} 
then the flux accuracy satisfies 
$$\mathbf{F}_M - \mathbf{F}_S = \mathcal{O}(h^4).$$
\end{lemma}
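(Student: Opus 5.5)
We compare $\bF_M$ with $\bF_S$ component by component, writing $\tbVi_r=\bVi_r+\delta\tbVi_r$ and $\tbVi_{r+1}=\bVi_{r+1}+\delta\tbVi_{r+1}$ throughout. We use only the algebraic forms of $\tPsi_{r,r+\hf}$ and $\tPsi_{r+\hf,r+1}$ in \eqref{maire_pres_hoo} together with the three hypotheses. The smoothness of $\U$ enters only through orders of magnitude: $P,\alpha,\bV$ are $\cO(1)$; differences of nodal values along an edge, such as $\alpha^i_{r+1}-\alpha^i_r$ or $\bVi_{r+1}-\bVi_r$, are $\cO(h)$; and $w^i_{r+\hf}=\cO(h^2)$, since it is a second difference along the straight edge.

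\textbf{Volume component.} By definition of $w^i_{r+\hf}$,
\[
\tfrac12(\tbVi_r+\tbVi_{r+1})\cdot\bN^i_{r,r+1}
-\bra{\ah\bVi_r+\fh\bVi_{r+\hf}+\ah\bVi_{r+1}}\cdot\bN^i_{r,r+1}
=\tfrac12\bN^i_{r,r+1}\cdot(\delta\tbVi_r+\delta\tbVi_{r+1})-w^i_{r+\hf}.
\]
Hypothesis 3 shows this difference is $\cO(h^4)$.

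\textbf{Momentum component.} In \eqref{maire_pres_hoo} the factor multiplying each impedance is $(\tbVi_r-\bVi_r)\cdot\bN=\delta\tbVi_r\cdot\bN$, and similarly at $r+1$. Averaging the two half-surface pressures gives
\[
\tfrac12(\tPsi_{r,r+\hf}+\tPsi_{r+\hf,r+1})
=\ah P^i_r+\fh P^i_{r+\hf}+\ah P^i_{r+1}+\alpha^i_{r+\hf}w^i_{r+\hf}
-\tfrac12\bra{\bh\alpha^i_r+\fh\alpha^i_{r+\hf}}\delta\tbVi_r\cdot\bN
-\tfrac12\bra{\bh\alpha^i_{r+1}+\fh\alpha^i_{r+\hf}}\delta\tbVi_{r+1}\cdot\bN .
\]
Replace both impedance weights by their common midpoint value $\bh\alpha^i_{r+\hf}+\fh\alpha^i_{r+\hf}=\alpha^i_{r+\hf}$. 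The error committed is $\cO(h)$ from the impedance differences times $\cO(h^2)$ from hypothesis 1, i.e.\ $\cO(h^3)$. With the common weight, the last two terms combine to $-\tfrac12\alpha^i_{r+\hf}\,\bN\cdot(\delta\tbVi_r+\delta\tbVi_{r+1})$, and hypothesis 3 makes this cancel $\alpha^i_{r+\hf}w^i_{r+\hf}$ up to $\cO(h^4)$.

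\textbf{Main obstacle.} The $\cO(h^3)$ remainder just described is too large, so this crude replacement must be sharpened. We expand to first order: $\alpha^i_r=\alpha^i_{r+\hf}-a+\cO(h^2)$ and $\alpha^i_{r+1}=\alpha^i_{r+\hf}+a+\cO(h^2)$, with $a=\cO(h)$. The first-order part of the remainder is then
\[
\tfrac16 a\bra{\delta\tbVi_r-\delta\tbVi_{r+1}}\cdot\bN=\cO(h)\cdot\cO(h^3)=\cO(h^4)
\]
by hypothesis 2, and the $\cO(h^2)$ parts of the expansion times $\cO(h^2)$ from hypothesis 1 are also $\cO(h^4)$. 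This antisymmetric cancellation is the whole reason hypothesis 2 is needed.

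\textbf{Energy component.} The target is $\ah P^i_r u^i_r+\fh P^i_{r+\hf}u^i_{r+\hf}+\ah P^i_{r+1}u^i_{r+1}$, where $u=\bV\cdot\bN$. Write $\tPsi_{r,r+\hf}=\Pi_0+q_0$ and $\tPsi_{r+\hf,r+1}=\Pi_1+q_1$, where
\[
\Pi_0=\bh P^i_r+\fh P^i_{r+\hf}+\alpha^i_{r+\hf}w^i_{r+\hf},\qquad
\Pi_1=\bh P^i_{r+1}+\fh P^i_{r+\hf}+\alpha^i_{r+\hf}w^i_{r+\hf},
\]
and $q_0,q_1=\cO(h^2)$ are the impedance-times-$\delta\tbV$ corrections. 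Expand $\tfrac12\bra{(\Pi_0+q_0)(u^i_r+\delta u_r)+(\Pi_1+q_1)(u^i_{r+1}+\delta u_{r+1})}$, where $\delta u=\delta\tbV\cdot\bN$.

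The terms bilinear in the corrections, $q\,\delta u$ and $\alpha w\,\delta u$, are $\cO(h^4)$ by hypothesis 1 and $w=\cO(h^2)$. For the terms linear in the corrections, we split each nodal factor $X_r,X_{r+1}$ into the edge mean $\tfrac12(X_r+X_{r+1})$ plus a half-difference $\mp\tfrac12(X_{r+1}-X_r)$ of size $\cO(h)$. The half-difference contributions pair against $\delta u_{r+1}-\delta u_r=\cO(h^3)$ (hypothesis 2) or against $\cO(h^2)$ corrections, and are $\cO(h^4)$ after the same first-order Taylor argument used for the momentum component. Multiplying the momentum result by the mean velocity then leaves
\[
\tfrac12(\Pi_0u^i_r+\Pi_1u^i_{r+1})+\bar u\,\alpha^i_{r+\hf}w^i_{r+\hf}-\tfrac12\alpha^i_{r+\hf}\bar u\,\bN\cdot(\delta\tbVi_r+\delta\tbVi_{r+1}),
\]
where $\bar u=\tfrac12(u^i_r+u^i_{r+1})$. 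The last two terms cancel to $\cO(h^4)$ by hypothesis 3.

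The remaining pressure--velocity part $\tfrac12(\Pi_0u^i_r+\Pi_1u^i_{r+1})$ must match the Simpson target. The $w$-free part of $\Pi_0,\Pi_1$ is not itself a Simpson combination of $Pu$: the term $\fh P^i_{r+\hf}\,\bar u$ differs from $\fh P^i_{r+\hf}u^i_{r+\hf}$ by $-P^i_{r+\hf}w^i_{r+\hf}$, which is $\cO(h^2)$, so this mismatch must be absorbed exactly rather than only estimated. The plan is to Taylor-expand $P$ and $u$ about $\bx^i_{r+\hf}$ along the straight edge and compare the $\cO(h^2)$ terms. If a residual $P\,w$-type term survives, we will use the energy flux's $w$-term $\alpha^i_{r+\hf}w^i_{r+\hf}\bar u$ to account for it, and finish with the usual $\mathcal{O}(h^4)$ Simpson remainder, in the same way the momentum bookkeeping was closed. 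This matching of the energy flux is where we expect the main difficulty to lie.
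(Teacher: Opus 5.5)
The volume and momentum parts are correct. In the momentum part, your antisymmetric use of hypothesis~2 is the justification for the factor $\brb{1+\cO(h^2)}$ that the paper writes without comment when it replaces $\bh\alpha^i_r+\fh\alpha^i_{r+\hf}$ and $\bh\alpha^i_{r+1}+\fh\alpha^i_{r+\hf}$ by $\alpha^i_{r+\hf}$.

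The energy component has a genuine gap. An $\cO(h^2)$ term is lost in your bookkeeping, and the difficulty you flag at the end exists only because of that loss. When you split the terms linear in the corrections into edge mean plus half-difference, the $q\,u$ terms give the mean $-\hf\alpha^i_{r+\hf}\bar u\,\bN\cdot(\delta\tbVi_r+\delta\tbVi_{r+1})$, which you keep. The $\Pi\,\delta u$ terms give the mean
\[
\frac{1}{4}\bra{\Pi_0+\Pi_1}\bra{\delta u_r+\delta u_{r+1}}
=\hf P^i_{r+\hf}\,\bN^i_{r,r+1}\cdot\bra{\delta\tbVi_r+\delta\tbVi_{r+1}}+\cO(h^4),
\]
because $\hf(\Pi_0+\Pi_1)=\ah P^i_r+\fh P^i_{r+\hf}+\ah P^i_{r+1}+\alpha^i_{r+\hf}w^i_{r+\hf}=P^i_{r+\hf}+\cO(h^2)$. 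This term is of size $\cO(h^2)$ and is missing from your display. Separately, that display counts $\bar u\,\alpha^i_{r+\hf}w^i_{r+\hf}$ twice if $\Pi_0,\Pi_1$ contain $\alpha^i_{r+\hf}w^i_{r+\hf}$ as you defined them.

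The dropped term is exactly what closes the estimate, and it is how the paper does it, through its quantity $\mathrm{I}$. By hypothesis~3 it equals $P^i_{r+\hf}w^i_{r+\hf}+\cO(h^4)$. Since $w^i_{r+\hf}=\fh\bra{u^i_{r+\hf}-\bar u}$ by definition, the identity $\fh P^i_{r+\hf}\bar u+P^i_{r+\hf}w^i_{r+\hf}=\fh P^i_{r+\hf}u^i_{r+\hf}$ holds exactly. So your $w$-free part $\ah P^i_ru^i_r+\fh P^i_{r+\hf}\bar u+\ah P^i_{r+1}u^i_{r+1}$, plus this term, is precisely the Simpson target.

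Hypothesis~3 must therefore be used twice in the energy component:
\begin{itemize}
\item once weighted by $\alpha^i_{r+\hf}\bar u$, to cancel the $q\,u$ mean against the $\alpha w$ part of $\Pi$;
\item once weighted by $P^i_{r+\hf}$, to supply the missing $P^i_{r+\hf}w^i_{r+\hf}$.
\end{itemize}

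Your proposed fallbacks cannot replace this.
\begin{itemize}
\item The mismatch $-P^i_{r+\hf}w^i_{r+\hf}$ equals $\frac{1}{12}(L^i_{r,r+1})^2P^i_{r+\hf}$ times the second tangential derivative of $\bV\cdot\bN^i_{r,r+1}$, up to $\cO(h^4)$. It is generically nonzero, so no Taylor expansion removes it.
\item The term $\alpha^i_{r+\hf}w^i_{r+\hf}\bar u$ cannot absorb it. Its coefficient is $\alpha\bar u$ rather than $P$, and the single such term in the expansion is already used up by the cancellation above.
\end{itemize}
There is also no Simpson remainder to add at the end. $\bF_S$ is itself the Simpson combination, so the comparison is exact algebra modulo the three hypotheses.
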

\begin{proof}[Proof of Lemma 2]
For start, the difference between first component is computed as
\begin{eqnarray*}
& & \hf \bra{\tbVi_{r}+\tbVi_{r+1}} \cdot \bN^i_{r,r+1}
-  \bra{\ah \bVi_{r} + \fh \bVi_{r+\hf} + \ah \bVi_{r+1}}\cdot \bN^i_{r,r+1} \\
&=& \hf \bra{\delta\tbVi_r + \delta\tbVi_{r+1}} \cdot \bN^i_{r,r+1}
-  \fh \bra{\bVi_{r+\hf} - \hf\bVi_{r} - \hf\bVi_{r+1}} \cdot \bN^i_{r,r+1} \\
&=& \hf \bra{\delta\tbVi_r + \delta\tbVi_{r+1}} \cdot \bN^i_{r,r+1} - w^i_{r+\hf} = \cO(h^4),
\end{eqnarray*}
where $\delta\tbVi_r= \tbVi_r-\bVi_r$.

Recall the jump condition Eqs. \eqref{maire_pres_hoo} for surface $[r,r+1]$ as
\begin{eqnarray*}
\tPsi_{r,r+\hf} &=&
\bra{\frac{1}{3}P^{i}_{r} +\frac{2}{3} P^{i}_{r+\hf}}
- \left[
\bra{\frac{1}{3}\alpha^{i}_{r}+\frac{2}{3}\alpha^{i}_{r+\hf}}\delta\tbVi_r \cdot \bN^i_{r,r+1}
-\alpha^{i}_{r+\hf} w^{i}_{r+\hf}
\right], \\
\tPsi_{r+\hf,r+1} &=&
\bra{\frac{1}{3}P^{i}_{r+1} +\frac{2}{3} P^{i}_{r+\hf}}
- \left[
\bra{\frac{1}{3}\alpha^{i}_{r+1}+\frac{2}{3}\alpha^{i}_{r+\hf}}\delta\tbVi_{r+1}\cdot \bN^i_{r,r+1}
-\alpha^{i}_{r+\hf} w^{i}_{r+\hf}
\right],
\end{eqnarray*}
then the pressure difference can be simplified as
\begin{eqnarray*}
& & \bra{\ah P^i_{r} + \fh P^i_{r+\hf} + \ah P^i_{r+1}}\bN^i_{r,r+1}
- \hf \bra{\tPsi_{r,r+\hf} + \tPsi_{r+\hf,r+1}}
\bN^i_{r,r+1} \\
&=& \hf \left[
\bra{\frac{1}{3}\alpha^{i}_{r}+\frac{2}{3}\alpha^{i}_{r+\hf}}\delta\tbVi_r\cdot \bN^i_{r,r+1}
+ \bra{\frac{1}{3}\alpha^{i}_{r+1}+\frac{2}{3}\alpha^{i}_{r+\hf}}\delta\tbVi_{r+1}\cdot \bN^i_{r,r+1}
\right] -\alpha^{i}_{r+\hf} w^{i}_{r+\hf} \\
&=& \hf \alpha^i_{r+\hf}\bra{\delta\tbVi_r+\delta\tbVi_{r+1}}\cdot \bN^i_{r,r+1} \brb{1+\cO(h^2)} - \alpha^{i}_{r+\hf} w^{i}_{r+\hf} \\
&=& \alpha^i_{r+\hf}w^i_{r+\hf} \brb{1+\cO(h^2)} - \alpha^{i}_{r+\hf} w^{i}_{r+\hf} = \cO(h^4).
\end{eqnarray*}

At last, the energy term of $\bF_M$ is
\begin{eqnarray*}
& & \hf \bra{\tPsi_{r,r+\hf}\tbVi_{r}
+ \tPsi_{r+\hf,r+1}\tbVi_{r+1}}\cdot \bN^i_{r,r+1} \\
&=& \hf \left[
\bra{\frac{1}{3}P^{i}_{r} +\frac{2}{3} P^{i}_{r+\hf}}
- \bra{\frac{1}{3}\alpha^{i}_{r}+\frac{2}{3}\alpha^{i}_{r+\hf}}
\delta\tbVi_r\cdot \bN^i_{r,r+1}
+\alpha^{i}_{r+\hf} w^{i}_{r+\hf}
\right] \tbVi_{r}\cdot \bN^i_{r,r+1} \\
&+& \hf \left[
\bra{\frac{1}{3}P^{i}_{r+1} +\frac{2}{3} P^{i}_{r+\hf}}
- \bra{\frac{1}{3}\alpha^{i}_{r+1}+\frac{2}{3}\alpha^{i}_{r+\hf}}\delta\tbVi_{r+1}\cdot \bN^i_{r,r+1}
+\alpha^{i}_{r+\hf} w^{i}_{r+\hf} \right]
\tbVi_{r+1}\cdot \bN^i_{r,r+1} \\
&=& \underbrace{\bra{\frac{1}{6}P^{i}_{r} +\frac{1}{3} P^{i}_{r+\hf}}\tbVi_{r}\cdot \bN^i_{r,r+1} + \bra{\frac{1}{6}P^{i}_{r+1} +\frac{1}{3} P^{i}_{r+\hf}}\tbVi_{r+1}\cdot\bN^i_{r,r+1}}_\mathrm{I} \\
& & - \alpha^i_{r+\hf}
\bra{\hf\delta\tbVi_r\cdot\bN^i_{r,r+1} + \hf\delta\tbVi_r\cdot\bN^i_{r,r+1}}
\bra{\hf\tbVi_r\cdot\bN^i_{r,r+1} + \hf\tbVi_r\cdot\bN^i_{r,r+1}}\brb{1+\cO(h^2)}\\
& & + \alpha^i_{r+\hf} w^i_{r+\hf}\bra{\hf\tbVi_r\cdot\bN^i_{r,r+1} + \hf\tbVi_r\cdot\bN^i_{r,r+1}} \\
&=& \mathrm{I} - \alpha^i_{r+\hf} w^i_{r+\hf}\bra{\hf\tbVi_r\cdot\bN^i_{r,r+1} + \hf\tbVi_r\cdot\bN^i_{r,r+1}}\brb{1+\cO(h^2)} + \alpha^i_{r+\hf} w^i_{r+\hf}\bra{\hf\tbVi_r\cdot\bN^i_{r,r+1} + \hf\tbVi_r\cdot\bN^i_{r,r+1}} \\
&=& \mathrm{I} + \cO(h^4).
\end{eqnarray*}
Meanwhile,
\begin{eqnarray*}
\mathrm{I} &=& \bra{\frac{1}{6}P^{i}_{r} +\frac{1}{3} P^{i}_{r+\hf}}\bV^i_{r}\cdot \bN^i_{r,r+1}
+ \bra{\frac{1}{6}P^{i}_{r+1} +\frac{1}{3} P^{i}_{r+\hf}}\bV^i_{r+1}\cdot\bN^i_{r,r+1} \\
&+& \bra{\frac{1}{6}P^{i}_{r} +\frac{1}{3} P^{i}_{r+\hf}}\delta\tbVi_{r}\cdot \bN^i_{r,r+1}
+ \bra{\frac{1}{6}P^{i}_{r+1} +\frac{1}{3} P^{i}_{r+\hf}}\delta\tbVi_{r+1}\cdot\bN^i_{r,r+1} \\
&=& \bra{\frac{1}{6}P^{i}_{r} +\frac{1}{3} P^{i}_{r+\hf}}\bV^i_{r}\cdot \bN^i_{r,r+1}
+ \bra{\frac{1}{6}P^{i}_{r+1} +\frac{1}{3} P^{i}_{r+\hf}}\bV^i_{r+1}\cdot\bN^i_{r,r+1} \\
&+& \hf P^i_{r+\hf}\bra{\delta\tbVi_{r}+\delta\tbVi_{r+1}}\cdot \bN^i_{r,r+1} \brb{1+\cO(h^2)} \\
&=&\bra{\frac{1}{6}P^{i}_{r} +\frac{1}{3} P^{i}_{r+\hf}}\bV^i_{r}\cdot \bN^i_{r,r+1}
+ \bra{\frac{1}{6}P^{i}_{r+1} +\frac{1}{3} P^{i}_{r+\hf}}\bV^i_{r+1}\cdot\bN^i_{r,r+1} \\
&+& \frac{2}{3}P^i_{r+\hf} \bra{\bV^i_{r+\hf} - \hf \bV^i_r -\hf \bV^i_{r+1}}\cdot \bN^i_{r,r+1} + \cO(h^4) \\
&=& \bra{\frac{1}{6}P^{i}_{r}  \bV^i_{r}
+ \frac{2}{3}P^i_{r+\hf} \bV^i_{r+\hf}
+ \frac{1}{6}P^{i}_{r+1} \bV^i_{r+1}}\cdot\bN^i_{r,r+1} + \cO(h^4).
\end{eqnarray*}
Together, the difference of energy term is
\begin{eqnarray*}
\bra{\ah P^i_{r}\bVi_{r} + \fh P^i_{r+\hf}\bVi_{r+\hf} + \ah P^i_{r+1}\bVi_{r+1}}\cdot \bN^i_{r,r+1} - \hf \bra{\tPsi_{r,r+\hf}\tbVi_{r}
+ \tPsi_{r+\hf,r+1}\tbVi_{r+1}}\cdot \bN^i_{r,r+1} = \cO(h^4).
\end{eqnarray*}

Finally, it is proved that the Lagrangian flux given by our augmented nodal solver has the $4$-th order accuracy.
\end{proof}

\section*{Appendix \hypertarget{Apd.PV}{C}. Derivates reconstruction} 
In order to reconstruct derivates $\hat{\U}_x, \hat{\U}_y$ and one-sided derivates $\hat{\U}_{x,L}, \hat{\U}_{x,R}, \hat{\U}_{y,L}, \hat{\U}_{y,R}$ used in Section \ref{chap:Lag:LMCV:PVflux}, it is sufficient to reconstruct for each scalar component $\varphi$. Let $\mathcal{N}(\bx^i_r)$ be the index set of cells adjacent to $\bx^i_r$, then a least squares problem is introduced as
\begin{eqnarray*}
\min_{\nabla\varphi, \nabla^2 \varphi} \sum_{j\in \mathcal{N}(\bx^i_r)}|D\varphi(\bx^j_c) - \nabla_{\bx} \varphi^j(\bx^i_r)|^2
\end{eqnarray*}
where $\varphi^j$ is the high order reconstruction function of cell $\omega^j$ defined in Section \ref{chap:Lag:LMCV:space}, and
\begin{eqnarray*}
D\varphi(\bx) = \nabla \varphi + \nabla^2 \varphi (\bx - \bx^i_r),
\end{eqnarray*}
where $\nabla\varphi \in \mathbb{R}^2$ and $\nabla^2 \varphi\in M_2(\mathbb{R})$ are undetermined coefficients of the least squares problem, which gives $(\hat{\varphi}_x,\hat{\varphi}_y)^{\top} = \nabla \varphi$ after solved.

One-sided derivates are obtained by a much straightforward way, since they only participate in numerical viscosity. Taking $\hat{\varphi}_{x,L}, \hat{\varphi}_{x,R}$ for example, weight averages on both sides are considered as
\begin{eqnarray*}
\hat{\varphi}_{x,L} =\frac{\displaystyle
\sum_{j \in \mathcal{N}_{x,L}(\bx^i_r) } \frac{\partial \varphi^j}{\partial x}(\bx^i_r)\, (\bx^j_c-\bx^i_r)\cdot \mathbf{e}_0
}
{\displaystyle
\sum_{j \in \mathcal{N}_{x,L}(\bx^i_r) } (\bx^j_c-\bx^i_r)\cdot \mathbf{e}_0
}, \\
\hat{\varphi}_{x,R} =\frac{\displaystyle
\sum_{j \in \mathcal{N}_{x,R}(\bx^i_r) } \frac{\partial \varphi^j}{\partial x}(\bx^i_r)\,  (\bx^j_c-\bx^i_r)\cdot \mathbf{e}_0
}
{\displaystyle
\sum_{j \in \mathcal{N}_{x,R}(\bx^i_r) } (\bx^j_c-\bx^i_r)\cdot \mathbf{e}_0
},
\end{eqnarray*}
where $\mathbf{e}_0 = (1,0)^{\top}$ and 
\begin{gather*}
\mathcal{N}_{x,L} = \brc{j,\;(\bx^j_c-\bx^i_r)\cdot \mathbf{e}_0 < 0 }\cap\mathcal{N}(\bx^i_r), \\
\mathcal{N}_{x,R} = \brc{j, \;(\bx^j_c-\bx^i_r)\cdot \mathbf{e}_0 > 0 }\cap\mathcal{N}(\bx^i_r),
\end{gather*}
which indicate the left and right neighborhood of $\bx^i_r$. 

In the same way, $\hat{\varphi}_{y,L}$ and $\hat{\varphi}_{y,R}$ are defined as
\begin{eqnarray*}
\hat{\varphi}_{y,L} =\frac{\displaystyle
\sum_{\mathcal{N}_{y,L}(\bx^i_r)} \frac{\partial \varphi^j}{\partial y}(\bx^i_r)\, (\bx^j_c-\bx^i_r)\cdot \mathbf{e}_1
}
{\displaystyle
\sum_{\mathcal{N}_{y,L}(\bx^i_r)} (\bx^j_c-\bx^i_r)\cdot \mathbf{e}_1
}, \\
\hat{\varphi}_{y,R} =\frac{\displaystyle
\sum_{\mathcal{N}_{y,R}(\bx^i_r)} \frac{\partial \varphi^j}{\partial y}(\bx^i_r)\,  (\bx^j_c-\bx^i_r)\cdot \mathbf{e}_1
}
{\displaystyle
\sum_{\mathcal{N}_{y,R}(\bx^i_r)} (\bx^j_c-\bx^i_r)\cdot \mathbf{e}_1
}
\end{eqnarray*}
with $\mathbf{e}_1 = (0,1)^{\top}$ and 
\begin{gather*}
\mathcal{N}_{y,L} = \brc{j,\;(\bx^j_c-\bx^i_r)\cdot \mathbf{e}_1 < 0 }\cap\mathcal{N}(\bx^i_r), \\
\mathcal{N}_{y,R} = \brc{j, \;(\bx^j_c-\bx^i_r)\cdot \mathbf{e}_1 > 0 }\cap\mathcal{N}(\bx^i_r).
\end{gather*}

\end{document}